\newtheorem{theorem}{Theorem}[section]
\newtheorem{assumption}[theorem]{Assumption}
\newtheorem{corollary}[theorem]{Corollary}
\newtheorem{definition}[theorem]{Definition}
\newtheorem{lemma}[theorem]{Lemma}
\newtheorem{proposition}[theorem]{Proposition}
\newtheorem{remark}[theorem]{Remark}
\newenvironment{proof}[1][Proof]{\textbf{#1.} }{\ \rule{0.5em}{0.5em}}
\numberwithin{equation}{section}
\begin{document}

\title{Functional differential equation with infinite delay in a space of
exponentially bounded and uniformly continuous functions}
\author{\textsc{Zhihua Liu$^{a,}$\thanks{Research was partially supported by National Natural Science Foundation of China (Grant Nos. 11871007 and 11811530272) and the Fundamental Research Funds for the Central Universities.} and Pierre Magal$^{b,}$\thanks{Research was partially supported by the French Ministry of Foreign and European Affairs program France-China Cai Yuanpei Campus France (30268SM).}} 
\\
$^{a}$\textit{\small School of Mathematical Sciences, Beijing Normal
University,}\\
\textit{\small Beijing 100875, People's Republic of China}\\
$^b${\small \textit{Univ. Bordeaux, IMB, UMR 5251, F-33400 Talence, France}} 
\\
{\small \textit{CNRS, IMB, UMR 5251, F-33400 Talence, France.}} }
\maketitle

\begin{abstract}
In this article we study a class of delay differential equations with
infinite delay in weighted spaces of uniformly continuous functions. We
focus on the integrated semigroup formulation of the problem and so doing we
provide a spectral theory. As a consequence we obtain a local stability
result and a Hopf bifurcation theorem for the semiflow generated by such a
problem.

\vspace{0.1cm} \noindent \textbf{Key words}. Functional differential
equations, infinite delay, integrated semigroup, stability, Hopf bifurcation

\vspace{0.1cm} \noindent \textbf{AMS Subject Classification}. 34K18, 34K20,
37L10.
\end{abstract}

\section{Introduction}

Functional differential equations with finite and infinite delay have been
extensively studied in the literature. Finite delay differential equations
have firstly been studied in the 1970s by the group of Hale's \cite%
{Hale71,Hale77, Hale-Lunel}. Since then people tried to extend some
bifurcation results for ordinary differential equations to functional
differential equations. In order to do so, one of the main difficulties is
to understand the relationship between the spectral properties of the
linearized system (around a given equilibrium) and the dynamical properties
of nonlinear perturbed systems. This type of questions have been directly
considered for delay differential equations by deriving a so called
variation of constant formula. We also refer to Arino and Sanchez \cite%
{Arino-Sanchez} and Kappel \cite{Kappel} for more results about this topic.

In the 1980s and 1990s variation of constant formula was reconsidered by
using non classical perturbation idea coming from semigroup theory. Sun-star
adjoint spaces and semigroup theory have been firstly successfully applied
to delay differential equations. We refer to Diekmann et al. \cite{Diekmann}
for a nice survey about this topic. We refer to Kaashoek and Verduyn Lunel 
\cite{Kaashoek-Verduyn Lunel}, Frasson and Verduyn Lunel \cite%
{Frasson-Verduyn-Lunel} and Diekmann Getto and Gyllenberg \cite%
{Diekmann-Getto-Gyllenberg} and references therein for more results in that
direction.

Around the same period Adimy \cite{Adimy90} and Thieme \cite{Thieme90a}
observed that integrated semigroups can also be used to derive a variation
of constant formula to describe functional differential equations. We also
refer to Adimy \cite{Adimy93a}, Adimy and Arino \cite{Adimy93b} and Ezzinbi
and Adimy \cite{Ezzinbi-Adimy} for more results about this topic. 
In the
present article, we will also use integrated semigroup theory and we will
reconsider the formulation introduced by Liu, Magal and Ruan \cite
{Liu-Magal-Ruan08} to study infinite delay differential equations. More recently, infinite delay has also been considered by Walther \cite{Walther} in the context of state dependent delay differential equations. 

Consider the weighted space of uniformly continuous functions 
\begin{equation*}
\begin{array}{r}
BUC_{\eta }=\left\{ \varphi \in C\left( \left( -\infty ,0\right] ,\mathbb{R}%
^{n}\right) :\text{ }\theta \rightarrow e^{\eta \theta }\varphi \left(
\theta \right) \text{ is bounded and}\right. \\ 
\left. \ \ \ \ \ \ \ \ \ \ \ \ \ \ \ \ \ \ \ \ \ \ \ \ \ \ \ \ \ \ \ \ \ 
\text{ uniformly continuous}\right\}%
\end{array}%
\end{equation*}%
which is a Banach space endowed with the norm 
\begin{equation*}
\left\Vert \varphi \right\Vert _{\eta }:=\sup_{\theta \leq 0}e^{\eta \theta
}\left\Vert \varphi \left( \theta \right) \right\Vert .
\end{equation*}

In this article we consider the following class of functional differential
equations on the space $BUC_{\eta }$ 
\begin{equation}
\text{(FDE)}\;\;\;\left\{ 
\begin{array}{l}
\dfrac{dx(t)}{dt}=f(x_{t}),\forall t\geq 0, \\ 
x_{0}=\varphi \in BUC_{\eta },%
\end{array}%
\right.  \label{1.1}
\end{equation}%
where $f:BUC_{\eta }\rightarrow \mathbb{R}^{n}$ is Lipschitz on bounded
sets. Recall that for any given map $x\in C\left( \left( -\infty ,\tau %
\right] ,\mathbb{R}^{n}\right) $ (for some $\tau \geq 0$) and each $t\leq
\tau $ the map $x_{t}\in C\left( \left( -\infty ,0\right] ,\mathbb{R}%
^{n}\right) $ is defined by 
\begin{equation*}
x_{t}\left( \theta \right) =x(t+\theta ),\forall \theta \leq 0.
\end{equation*}%
Then it is easy to verify that if $x\in C\left( \left( -\infty ,\tau \right]
,\mathbb{R}^{n}\right) $ for some $\tau \geq 0$ then

\begin{equation*}
x_0 \in BUC_{\eta } \Rightarrow x_t \in BUC_{\eta }, \forall t \in \left[0,
\tau \right].
\end{equation*}
Recall the notion of solution for the FDE.

\begin{definition}
\label{DE1.1}A solution of the FDE (\ref{1.1}) is a continuous map $x:\left(
-\infty ,\tau \right] \rightarrow \mathbb{R}^{n}$ (for some $\tau >0$)
satisfying 
\begin{equation}
x(t)=\left\{ 
\begin{array}{l}
\varphi (0)+\int_{0}^{t}f(x_{l})dl,\forall t\geq 0, \\ 
\varphi (t),\forall t\leq 0.%
\end{array}%
\right.  \label{1.2}
\end{equation}
\end{definition}
Assume that 
\begin{equation*}
f(0_{BUC_{\eta }})=0.
\end{equation*}%
Then $0$ is an equilibrium solution of the system (\ref{1.1}). Assume that $%
f $ is differential at $0$, and set 
\begin{equation*}
\widehat{L}:=Df(0).
\end{equation*}%
Then the linearized equation of (\ref{1.1}) around $0$ is 
\begin{equation}
\left\{ 
\begin{array}{l}
\dfrac{dx(t)}{dt}=\widehat{L}(x_{t}),\forall t\geq 0, \\ 
x_{0}=\varphi \in BUC_{\eta }.%
\end{array}%
\right.  \label{1.3}
\end{equation}
The first main question addressed in this article is to understand the
spectral properties of the linearized equation (\ref{1.3}). Then by using
the spectral properties of (\ref{1.3}) we will derive a stability and Hopf
bifurcation results for equation (\ref{1.1}). Actually the equation (\ref{1.1}) can be rewritten as 
\begin{equation}
\left\{ 
\begin{array}{l}
\dfrac{dx(t)}{dt}=\widehat{L}(x_{t})+g(x_{t}),\forall t\geq 0, \\ 
x_{0}=\varphi \in BUC_{\eta },
\end{array}%
\right.  \label{1.4}
\end{equation}%
where $g:=f-\widehat{L}$.

Assume that there exists $x\in C\left( \left( -\infty ,\tau \right] ,\mathbb{%
R}^{n}\right) $ (for some $\tau \geq 0$) a solution of (\ref{1.1}). Set 
\begin{equation}
u(t,\theta ):=x_{t}(\theta )=x(t+\theta ),\forall t\in \lbrack 0,\tau ]\text{
and }\forall \theta \leq 0.  \label{1.5}
\end{equation}%
Then $u$ can be regarded as a solution of the following system 
\begin{equation}
\text{(PDE)}\;\;\;\left\{ 
\begin{array}{l}
\partial _{t}u(t,\theta )-\partial _{\theta }u(t,\theta )=0,\text{ for }%
\theta \leq 0\text{ and }t\geq 0, \\ 
\partial _{\theta }u(t,0)=f(u(t,.)),\text{ for }t\geq 0, \\ 
u\left( 0,.\right) =\varphi \in BUC_{\eta }.%
\end{array}%
\right.  \label{1.6}
\end{equation}

The idea of using the PDE associated to the FDE, was successufully used by
Travis and Webb \cite{Travis-Webb74,Travis-Webb78} and Webb \cite{Webb76} to
apply nonlinear semigroup theory. We refer to Ruess \cite{Ruess} for more
results and updated references on this topic. To our best knowledge no
bifurcation results have been obtained by using this approach.

In order to reformulate the PDE problem as an abstract Cauchy problem, we
will first incorporate the boundary condition into the state space, by
considering the Banach space 
\begin{equation*}
X:=\mathbb{R}^{n}\times BUC_{\eta }
\end{equation*}%
endowed with the product norm 
\begin{equation*}
\left\Vert \left( \alpha ,\varphi \right) \right\Vert :=\left\Vert \alpha
\right\Vert _{\mathbb{R}^{n}}+\left\Vert \varphi \right\Vert _{\eta }.
\end{equation*}%
Define $A:D(A)\subset X\rightarrow X$ the linear operator by 
\begin{equation}
A\left( 
\begin{array}{c}
0_{\mathbb{R}^{n}} \\ 
\varphi%
\end{array}%
\right) :=\left( 
\begin{array}{c}
-\varphi ^{\prime }(0) \\ 
\varphi ^{\prime }%
\end{array}%
\right) ,\forall \left( 
\begin{array}{c}
0_{\mathbb{R}^{n}} \\ 
\varphi%
\end{array}%
\right) \in D(A),  \label{1.7}
\end{equation}%
with 
\begin{equation*}
D(A)=\left\{ 0_{\mathbb{R}^{n}}\right\} \times BUC_{\eta }^{1}
\end{equation*}%
where 
\begin{equation*}
BUC_{\eta }^{1}:=\left\{ \varphi \in C^{1}\left( \left( -\infty ,0\right] ,%
\mathbb{R}^{n}\right) :\varphi ,\varphi ^{\prime }\in BUC_{\eta }\left(
\left( -\infty ,0\right] ,\mathbb{R}^{n}\right) \right\} .
\end{equation*}%
It is important to note that the closure of the domain is 
\begin{equation*}
X_{0}:=\overline{D(A)}=\left\{ 0_{\mathbb{R}^{n}}\right\} \times BUC_{\eta }.
\end{equation*}%
Therefore, $A$ is non-densely defined. We also define $L:\overline{D(A)}%
\rightarrow X$ by 
\begin{equation*}
L\left( 
\begin{array}{c}
0_{\mathbb{R}^{n}} \\ 
\varphi%
\end{array}%
\right) =\left( 
\begin{array}{c}
\widehat{L}\left( \varphi \right) \\ 
0_{BUC_{\eta }}%
\end{array}%
\right) .
\end{equation*}
We consider $F:\overline{D(A)}\rightarrow X$ and $G:\overline{D(A)}%
\rightarrow X$ the maps defined by 
\begin{equation*}
F\left( 
\begin{array}{c}
0_{\mathbb{R}^{n}} \\ 
\varphi%
\end{array}%
\right) =\left( 
\begin{array}{c}
f(\varphi ) \\ 
0_{BUC_{\eta }}%
\end{array}%
\right) \text{ and }G\left( 
\begin{array}{c}
0_{\mathbb{R}^{n}} \\ 
\varphi%
\end{array}%
\right) =\left( 
\begin{array}{c}
g(\varphi ) \\ 
0_{BUC_{\eta }}%
\end{array}%
\right) .
\end{equation*}%
Then 
\begin{equation*}
F=L+G.
\end{equation*}%
Set 
\begin{equation*}
v(t)=\left( 
\begin{array}{c}
0_{\mathbb{R}^{n}} \\ 
u(t)%
\end{array}%
\right) .
\end{equation*}%
As we will see in section 2, the FDE (\ref{1.1}) or the PDE (\ref{1.6}) can
be reformulated as the following abstract non-densely defined Cauchy problem 
\begin{equation}
\text{(ACP)}\left\{ 
\begin{array}{l}
\dfrac{dv(t)}{dt}=Av(t)+L(v(t))+G(v(t)),t\geq 0, \\ 
v(0)=\left( 
\begin{array}{c}
0_{\mathbb{R}^{n}} \\ 
\varphi%
\end{array}%
\right) \in \overline{D(A)}.%
\end{array}%
\right.  \label{1.8}
\end{equation}

Several examples of infinite delay differential equations have been
considered in the literature. Here we present two examples. One is the
following chemostat model with a distributed delay considered in Ruan and
Wolkowicz \cite{Ruan-Wolkowicz}
\begin{equation}
\left\{ 
\begin{array}{l}
\frac{dS}{dt}=(S^{0}-S(t))D-ax(t)p(S(t)), \\ 
\frac{dx}{dt}=x(t)\left[ -D_{1}+\int_{-\infty }^{t}F(t-\tau )p(S(\tau
))d\tau \right] , \\ 
S(s)=\phi (s)>0,-\infty <s\leq 0,x(0)=x_{0}>0, \\ 
\phi (s)\text{ is a continuous function on }(-\infty ,0],%
\end{array}%
\right.  \label{1.9}
\end{equation}%
where $S(t)$ and $x(t)$ denote the concentration of the nutrient and the
populations of microorganisms at time $t$. Another example is the following
model of a fishery with fish stock involving delay equations investigated by
Auger and Ducrot \cite{Auger-Ducrot} 
\begin{equation}
\left\{ 
\begin{array}{l}
\frac{dn}{dt}=rn(1-n)-\varphi (n,E), \\ 
\frac{dE}{dt}=p(1-\eta )\varphi (n,E)+\eta p\int_{-\infty }^{0}\delta
e^{\delta \theta }\varphi (n(t+\theta ),E(t+\theta ))d\theta -cE.%
\end{array}%
\right.  \label{1.10}
\end{equation}%
where $n(t),E(t)$ denote the density of the resource and the fishing effort,
respectively. We refer to McCluskey \cite{McCluskey}, Rost and Wu \cite%
{Rost-Wu}, and Gourley, Rost and Thieme \cite{Gourley-Rost-Thieme} for more
examples in the context of population dynamics. We would like to mension
that we can derive a stability and Hopf bifurcation result for the models (%
\ref{1.9}) and (\ref{1.10}) by using the results presented in this article.

The last example is a model describing the interaction between floating structures in shallow water. More precisely, in order to describe the movement of the surface of the water Bocchi \cite{Bocchi} derives the following equation 
$$
f \left( \delta(t)\right) \ddot{\delta}(t)=\int_0^t F(s) \dot{\delta}(t-s)ds+g(\delta(t), \dot{\delta}(t)) 
$$
where $f$ and $g$ are smooth functions and the range of $f$ does not contain $0$.  The map $t \to F(t)$ is a continuous function which converges exponentially fast to $0$ when $t$ goes to infinity. Our results apply to this class of equation and we refer to \cite{Bocchi} for more results. 

The main tool of this article is to apply integrated semigroup theory. We
use essentially the results in Thieme \cite{Thieme90a,Thieme90b, Thieme97},
Magal and Ruan \cite{Magal-Ruan07, Magal-Ruan09b} and Liu, Magal and Ruan 
\cite{Liu-Magal-Ruan08}. We will first prove that $A$ is a Hille-Yosida
operator in order to define the mild solution of ACP. By using the
uniqueness of the mild solution of ACP, we will prove that each mild
solution of ACP corresponds to a solution of the FDE and the reverse is also
true. Then in order to obtain a spectral theory, we study the essential
growth rate of the semigroup generated by $A_{0}$  and by $(A+L)_{0}$. The operators $A_{0}$ (respectively $(A+L)_{0}$) is the part of $A$ (respectively $A+L$) in $\overline{D(A)}$ (see section 4). This part is crucial to understand the spectral properties of these semigroup.
Actually the fact that $\eta >0$ is crucial to obtain a stability results,
as well as a center manifold theorem (see Magal and Ruan \cite{Magal-Ruan09a}%
) and a Hopf bifurcation theorem. We refer to the book of Magal and Ruan \cite{Magal-Ruan18} for a nice survey on this topic. 

By transforming the system it is also possible to study the problem 
\begin{equation}
BUC:=BUC_{0}.  \label{1.11}
\end{equation}%
Consider the isometry form $\Psi :BUC_{\eta }\rightarrow BUC$ defined by 
\begin{equation*}
\Psi (u)(\theta ):=e^{\eta \theta }u(\theta ).
\end{equation*}%
By setting $\widehat{u}(t,\theta ):=e^{\eta \theta }u(t,\theta )$, the PDE (%
\ref{1.6}) gives 
\begin{equation}
\text{(PDE)}\;\;\;\left\{ 
\begin{array}{l}
\partial _{t}\widehat{u}(t,\theta )-\partial _{\theta }\widehat{u}(t,\theta
)=-\eta \widehat{u}(t,\theta ),\text{ for }\theta \leq 0\text{ and }t\geq 0,
\\ 
\partial _{\theta }\widehat{u}(t,0)-\eta \widehat{u}(t,0)=f(e^{-\eta .}%
\widehat{u}(t,.)),\text{ for }t\geq 0, \\ 
\widehat{u}\left( 0,.\right) =\varphi \in BUC.%
\end{array}%
\right.  \label{1.12}
\end{equation}%
Identifying $\widehat{u}(t,.)$ and $\widehat{v}(t)=\left( 
\begin{array}{c}
0_{\mathbb{R}^{n}} \\ 
\widehat{u}\left( t,.\right)%
\end{array}%
\right) $ the last PDE can be rewritten as an abstract Cauchy problem 
\begin{equation}
\dfrac{d\widehat{v}(t)}{dt}=B\widehat{v}(t)+H(\widehat{v}(t)),t\geq 0,%
\widehat{v}(0)=\widehat{v}_{0}\in \overline{D(B)}  \label{1.13}
\end{equation}%
where $B:D(B)\subset Y\rightarrow Y$ (where the Banach $Y:=\mathbb{R}%
^{n}\times BUC$ is endowed with the usual product norm) is the linear
operator defined by 
\begin{equation*}
B\left( 
\begin{array}{c}
0_{\mathbb{R}^{n}} \\ 
\varphi%
\end{array}%
\right) :=\left( 
\begin{array}{c}
-\varphi ^{\prime }(0)+\eta \varphi (0) \\ 
\varphi ^{\prime }-\eta \varphi%
\end{array}%
\right) ,\forall \left( 
\begin{array}{c}
0_{\mathbb{R}^{n}} \\ 
\varphi%
\end{array}%
\right) \in D(B),
\end{equation*}%
with 
\begin{equation*}
D(B)=\left\{ 0_{\mathbb{R}^{n}}\right\} \times BUC^{1}
\end{equation*}%
and $BUC^{1}:=BUC_{0}^{1}$. $H:\overline{D(B)}\rightarrow X$ is the map
defined by 
\begin{equation*}
H\left( 
\begin{array}{c}
0_{\mathbb{R}^{n}} \\ 
\varphi%
\end{array}%
\right) =\left( 
\begin{array}{c}
f(e^{-\eta .}\varphi (.)) \\ 
0_{BUC}%
\end{array}%
\right) .
\end{equation*}

For infinite delay differential equations various class of semi-normed
spaces have been considered firstly by Hale and Kato \cite{Hale-Kato}. We
refer to the book Hino, Murakami and Naito \cite{Hino-Murakami-Naito} for
more results and a nice survey on this subject. Along this line a variation
of constant formula has been obtained by Hino, Murakami, Naito and Minh \cite%
{Hino-Murakami-Naito-Minh}. We also refer to Diekmann and Gyllenberg \cite%
{Diekmann-Gyllenberg} for infinite delay differential equations in weighted $%
L^{1}$ space. Along this line Matsunaga, Murakami, Nagabuchi and Van Minh 
\cite{Matsunaga-Murakami-Nagabuchi-Van-Minh} recently proved a center
manifold theorem for difference equation in $L^{1}$ space. We shall also
mention that a Hopf bifurcation theorem has been obtained by Hassard,
Kazarinoff and Wan \cite[Chapter 4 Section 5]{Hassard-Kazarinoff-Wan} in $%
L^{2}$ function space. 

This article is entirely devoted to the first class of problem in $BUC_{\eta }$.
The paper is organized as follows. In section 2 we study the properties of
the linear operator $A$. In order to obtain an explicit formula for the
integrated solution of the abstract Cauchy problem (\ref{1.8}) we firstly
consider a special case of (\ref{1.8}) in section 3. In section 4, explicit
formulas for some mild solutions are given and the properties of the linear
operator $A+L$ are investigated. In section 5, we obtain an explicit formula
for the projectors on the generalized eigenspaces associated to some
eigenvalues. The projector for a simple eigenvalue is considered in section
6. Sections 7 and 8 deal with the nonlinear semiflow and local stability of
equilibria respectively. In Section 9, we show a few comments and remarks on
the center manifold theorem, Hopf bifurcation theory and normal form theory
for infinite delay differential equations.


\section{Preliminary results}

In order to apply integrated semigroup theory we need to verify the Hille-Yosida 
properties for the linear operator $A$.

\begin{lemma}
\label{LE2.1} We have $(0,+\infty ) \subset \rho \left( A\right)$ (where $\rho \left( A\right)$ is the resolvent set of $A$), and we have for each $\lambda >0$ and each $\left( 
\begin{array}{c}
\alpha \\ 
\varphi%
\end{array}%
\right) \in X$
\begin{equation}
\begin{array}{l}
\left( \lambda I-A\right) ^{-1}\left( 
\begin{array}{c}
\alpha \\ 
\varphi%
\end{array}%
\right) =\left( 
\begin{array}{c}
0_{\mathbb{R}^{n}} \\ 
\psi%
\end{array}%
\right) \\ 
\Leftrightarrow \psi (\theta )=\frac{1}{\lambda }e^{\lambda \theta }\left[
\alpha +\varphi \left( 0\right) \right] +\int_{\theta }^{0}e^{\lambda
(\theta -l)}\varphi \left( l\right) dl.%
\end{array}
\label{2.1}
\end{equation}
\end{lemma}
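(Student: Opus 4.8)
The plan is to solve the resolvent equation explicitly and then verify that the resulting function lies in the domain $D(A)$. Fix $\lambda>0$ and $(\alpha,\varphi)\in X$. Writing the equation $(\lambda I-A)(0_{\mathbb{R}^n},\psi)=(\alpha,\varphi)$ for $(0_{\mathbb{R}^n},\psi)\in D(A)=\{0_{\mathbb{R}^n}\}\times BUC_\eta^1$ and using the definition \eqref{1.7} of $A$, this is equivalent to the two conditions
\[
\psi'(0)=\alpha\qquad\text{and}\qquad \psi'(\theta)=\lambda\,\psi(\theta)-\varphi(\theta),\quad\theta\le0.
\]
The second relation is a first order linear ODE; integrating the identity $\big(e^{-\lambda\theta}\psi(\theta)\big)'=-e^{-\lambda\theta}\varphi(\theta)$ from $\theta$ to $0$ gives $\psi(\theta)=e^{\lambda\theta}\psi(0)+\int_\theta^0 e^{\lambda(\theta-l)}\varphi(l)\,dl$, and evaluating the ODE at $\theta=0$ together with $\psi'(0)=\alpha$ forces $\psi(0)=\frac{1}{\lambda}\big(\alpha+\varphi(0)\big)$. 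This yields precisely formula \eqref{2.1}, and since each step was an equivalence it also shows that \eqref{2.1} is the unique candidate, so $\lambda I-A$ is one-to-one on $D(A)$.

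The heart of the proof is to show that the $\psi$ given by \eqref{2.1} actually belongs to $BUC_\eta^1$, so that $(0_{\mathbb{R}^n},\psi)\in D(A)$ and $\lambda I-A$ is onto. Put $\mu:=\lambda+\eta>0$ and $w(\theta):=e^{\eta\theta}\varphi(\theta)$, which is bounded and uniformly continuous on $(-\infty,0]$ by the definition of $BUC_\eta$. Multiplying \eqref{2.1} by $e^{\eta\theta}$ and substituting $l=\theta+s$ in the integral, one obtains
\[
e^{\eta\theta}\psi(\theta)=\frac{1}{\lambda}\,e^{\mu\theta}\big[\alpha+\varphi(0)\big]+\int_0^{-\theta}e^{-\mu s}\,w(\theta+s)\,ds .
\]
The first term is globally Lipschitz in $\theta\le0$ (its derivative is bounded in norm by $\frac{\mu}{\lambda}\|\alpha+\varphi(0)\|$ since $e^{\mu\theta}\le1$) and bounded, using $\|\varphi(0)\|\le\|\varphi\|_\eta$. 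For the integral term $I(\theta)$, boundedness follows from $\|I(\theta)\|\le\|w\|_\infty\int_0^\infty e^{-\mu s}\,ds=\frac{1}{\mu}\|w\|_\infty=\frac{1}{\mu}\|\varphi\|_\eta$. For uniform continuity, take $\theta<\theta'\le0$, set $\delta:=\theta'-\theta$, split $\int_0^{-\theta}=\int_0^{-\theta'}+\int_{-\theta'}^{-\theta}$, and write
\[
I(\theta')-I(\theta)=\int_0^{-\theta'}e^{-\mu s}\big[w(\theta'+s)-w(\theta+s)\big]\,ds-\int_{-\theta'}^{-\theta}e^{-\mu s}\,w(\theta+s)\,ds .
\]
The first integral is bounded by $\frac{1}{\mu}\,\omega_w(\delta)$, with $\omega_w$ the modulus of continuity of $w$, and the second by $\|w\|_\infty\,\delta$ since $e^{-\mu s}\le1$ on $[-\theta',-\theta]$ and this interval has length $\delta$; hence $\|I(\theta')-I(\theta)\|\le\frac{1}{\mu}\omega_w(\delta)+\|w\|_\infty\delta\to0$ as $\delta\to0$, uniformly in $\theta,\theta'$. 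Therefore $\theta\mapsto e^{\eta\theta}\psi(\theta)$ is bounded and uniformly continuous, i.e. $\psi\in BUC_\eta$, and then $\psi'=\lambda\psi-\varphi\in BUC_\eta$ as a sum of two elements of $BUC_\eta$, so $\psi\in BUC_\eta^1$.

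It remains to check that the equation is genuinely solved and that the inverse is bounded. By construction $\psi'=\lambda\psi-\varphi$ and $\psi'(0)=\lambda\psi(0)-\varphi(0)=\alpha$, so $A(0_{\mathbb{R}^n},\psi)=(-\psi'(0),\psi')=(-\alpha,\lambda\psi-\varphi)$ and $(\lambda I-A)(0_{\mathbb{R}^n},\psi)=(\alpha,\varphi)$, which together with the injectivity established above shows $\lambda I-A:D(A)\to X$ is a bijection. The estimates above give $\|\psi\|_\eta\le\frac{1}{\lambda}\|\alpha\|+\big(\frac{1}{\lambda}+\frac{1}{\mu}\big)\|\varphi\|_\eta$, so $(\lambda I-A)^{-1}$ is bounded; hence $(0,+\infty)\subset\rho(A)$ and \eqref{2.1} holds. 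The only nontrivial point is the uniform-continuity estimate for the convolution-type integral $I(\theta)$, handled above by splitting the difference into a "shift in the argument of $w$" part, controlled by the uniform continuity of $w$, and a "change of integration range" part, controlled by the exponential decay $e^{-\mu s}$ together with $\mu>0$.
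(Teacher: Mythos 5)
Your proof is correct and follows essentially the same route as the paper's: writing out $(\lambda I-A)(0,\psi)=(\alpha,\varphi)$ as the ODE $\psi'=\lambda\psi-\varphi$ with $\psi'(0)=\alpha$ and solving it by a chain of equivalences to arrive at formula (\ref{2.1}). You additionally verify that the resulting $\psi$ actually lies in $BUC_{\eta}^{1}$ (surjectivity) and that the inverse is bounded, details the paper's proof leaves implicit or defers to the estimate in Lemma \ref{LE2.2}; this extra verification is carried out correctly.
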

\begin{proof}
Let $\lambda \in (0,\infty )$. For $\left( 
\begin{array}{c}
\alpha \\ 
\varphi%
\end{array}%
\right) \in X$ and $\left( 
\begin{array}{c}
0_{\mathbb{R}^{n}} \\ 
\psi%
\end{array}%
\right) \in D(A),$ we have 
\begin{equation*}
\begin{array}{l}
\left( \lambda I-A\right) \left( 
\begin{array}{c}
0_{\mathbb{R}^{n}} \\ 
\psi%
\end{array}%
\right) =\left( 
\begin{array}{c}
\alpha \\ 
\varphi%
\end{array}%
\right) \Leftrightarrow \left\{ 
\begin{array}{c}
\psi ^{\prime }(0)=\alpha \\ 
\lambda \psi -\psi ^{\prime }=\varphi%
\end{array}%
\right. \\ 
\Leftrightarrow \left\{ 
\begin{array}{l}
\lambda \psi (0)=\alpha +\varphi \left( 0\right) \\ 
\lambda \psi -\psi ^{\prime }=\varphi%
\end{array}%
\right. \\ 
\Leftrightarrow \left\{ 
\begin{array}{l}
\lambda \psi (0)=\alpha +\varphi \left( 0\right) \\ 
\psi \left( \theta \right) =e^{\lambda (\theta -\widehat{\theta })}\psi
\left( \widehat{\theta }\right) +\int_{\widehat{\theta }}^{\theta
}e^{\lambda (\theta -l)}\varphi \left( l\right) dl,\forall \theta \geq 
\widehat{\theta }%
\end{array}%
\right. \\ 
\Leftrightarrow \left\{ 
\begin{array}{l}
\lambda \psi (0)=\alpha +\varphi \left( 0\right) \\ 
\psi \left( \widehat{\theta }\right) =e^{\lambda \widehat{\theta }}\psi
\left( 0\right) -\int_{0}^{\widehat{\theta }}e^{\lambda (\widehat{\theta }%
-l)}\varphi \left( l\right) dl,\forall \widehat{\theta }\leq 0%
\end{array}%
\right. \\ 
\Leftrightarrow \psi \left( \widehat{\theta }\right) =\frac{1}{\lambda }%
e^{\lambda \widehat{\theta }}\left[ \alpha +\varphi \left( 0\right) \right]
-\int_{0}^{\widehat{\theta }}e^{\lambda (\widehat{\theta }-l)}\varphi \left(
l\right) dl,\forall \widehat{\theta }\leq 0.%
\end{array}%
\end{equation*}
\end{proof}
\begin{lemma}
\label{LE2.2} The linear operator $A:D(A)\subset X\rightarrow X$ is a
Hille-Yosida operator.\ More precisely, we have 
\begin{equation}
\left\Vert \left( \lambda I-A\right) ^{-n}\right\Vert _{\mathcal{L}\left(
X\right) }\leq \frac{1}{\lambda ^{n}},\forall n\geq 1,\forall \lambda >0.
\label{2.2}
\end{equation}
\end{lemma}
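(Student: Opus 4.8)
The plan is to reduce the Hille--Yosida estimate (\ref{2.2}) to a single bound on the resolvent itself. Since $(0,+\infty)\subset\rho(A)$ by Lemma \ref{LE2.1}, one has $(\lambda I-A)^{-n}=\left[(\lambda I-A)^{-1}\right]^{n}$ for every $\lambda>0$ and every $n\geq 1$, so by submultiplicativity of the operator norm it suffices to prove
\[
\left\Vert (\lambda I-A)^{-1}\right\Vert _{\mathcal{L}(X)}\leq\frac{1}{\lambda},\qquad\forall\lambda>0.
\]
First I would fix $\lambda>0$ and $\left(\begin{smallmatrix}\alpha\\\varphi\end{smallmatrix}\right)\in X$, and invoke the explicit formula (\ref{2.1}) of Lemma \ref{LE2.1}, which gives $(\lambda I-A)^{-1}\left(\begin{smallmatrix}\alpha\\\varphi\end{smallmatrix}\right)=\left(\begin{smallmatrix}0_{\mathbb{R}^{n}}\\\psi\end{smallmatrix}\right)$ with $\psi$ as in (\ref{2.1}). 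Because the first component vanishes, $\left\Vert(\lambda I-A)^{-1}\left(\begin{smallmatrix}\alpha\\\varphi\end{smallmatrix}\right)\right\Vert_{X}=\left\Vert\psi\right\Vert_{\eta}$, so everything reduces to the pointwise estimate
\[
e^{\eta\theta}\left\Vert\psi(\theta)\right\Vert\leq\frac{1}{\lambda}\left(\left\Vert\alpha\right\Vert_{\mathbb{R}^{n}}+\left\Vert\varphi\right\Vert_{\eta}\right),\qquad\forall\theta\leq 0.
\]

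To prove this I would separate in $\psi(\theta)$ the contribution $\frac{1}{\lambda}e^{\lambda\theta}\alpha$ of $\alpha$ from the part depending only on $\varphi$, namely $\frac{1}{\lambda}e^{\lambda\theta}\varphi(0)+\int_{\theta}^{0}e^{\lambda(\theta-l)}\varphi(l)\,dl$. The $\alpha$-term is immediate: $e^{\eta\theta}\cdot\frac{1}{\lambda}e^{\lambda\theta}\left\Vert\alpha\right\Vert=\frac{1}{\lambda}e^{(\lambda+\eta)\theta}\left\Vert\alpha\right\Vert\leq\frac{1}{\lambda}\left\Vert\alpha\right\Vert$ for all $\theta\leq 0$ (recall $\eta\geq 0$, hence $\lambda+\eta>0$). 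The point for the $\varphi$-term is that one must \emph{not} estimate the two summands separately — the term $\frac{1}{\lambda}e^{\lambda\theta}\varphi(0)$ alone already exhausts the whole budget $\frac{1}{\lambda}\left\Vert\varphi\right\Vert_{\eta}$ — but rather add them first. Using $\left\Vert\varphi(0)\right\Vert\leq\left\Vert\varphi\right\Vert_{\eta}$, $\left\Vert\varphi(l)\right\Vert\leq e^{-\eta l}\left\Vert\varphi\right\Vert_{\eta}$, and the elementary identity $\int_{\theta}^{0}e^{\lambda(\theta-l)}e^{-\eta l}\,dl=\dfrac{e^{-\eta\theta}-e^{\lambda\theta}}{\lambda+\eta}$, one is led to
\[
e^{\eta\theta}\left[\frac{1}{\lambda}e^{\lambda\theta}\left\Vert\varphi(0)\right\Vert+\int_{\theta}^{0}e^{\lambda(\theta-l)}\left\Vert\varphi(l)\right\Vert\,dl\right]\leq\left\Vert\varphi\right\Vert_{\eta}\left(\frac{s}{\lambda}+\frac{1-s}{\lambda+\eta}\right),\quad s:=e^{(\lambda+\eta)\theta}\in(0,1].
\]
The function $s\mapsto\frac{s}{\lambda}+\frac{1-s}{\lambda+\eta}$ is affine with nonnegative slope $\frac{1}{\lambda}-\frac{1}{\lambda+\eta}$, hence attains its maximum on $(0,1]$ at $s=1$, where its value is exactly $\frac{1}{\lambda}$. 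Adding the $\alpha$- and $\varphi$-contributions yields the pointwise estimate displayed above; taking the supremum over $\theta\leq 0$ gives $\left\Vert\psi\right\Vert_{\eta}\leq\frac{1}{\lambda}\left\Vert\left(\begin{smallmatrix}\alpha\\\varphi\end{smallmatrix}\right)\right\Vert_{X}$, and the lemma follows.

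I expect the only genuinely delicate point to be the reduction in the last display: recognizing that the worst case is $\theta=0$ requires combining the boundary term $\frac{1}{\lambda}e^{\lambda\theta}\varphi(0)$ with the integral term and noticing the affine dependence on $s=e^{(\lambda+\eta)\theta}$, which is precisely the place where the naive term-by-term triangle inequality fails. The remaining steps — the passage from the resolvent bound to the $n$-th power bound, and the bookkeeping with formula (\ref{2.1}) — are routine.
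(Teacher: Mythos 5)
Your proposal is correct and follows essentially the same route as the paper: both use the explicit resolvent formula of Lemma \ref{LE2.1}, peel off the $\alpha$-contribution, and crucially combine the $e^{\lambda\theta}\varphi(0)/\lambda$ term with the convolution integral so that the two pieces sum to exactly $1/\lambda$ (the paper simply bounds $\tfrac{1-s}{\lambda+\eta}\le\tfrac{1-s}{\lambda}$ before adding, where you instead maximize the affine function in $s$ — the same estimate). The reduction of the $n$-th power bound to the resolvent bound via submultiplicativity, which you make explicit, is left implicit in the paper.
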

\begin{proof}
Using (\ref{2.1}), we obtain 
\begin{eqnarray*}
&&\left\Vert \left( \lambda I-A\right) ^{-1}\left( 
\begin{array}{c}
\alpha \\ 
\varphi%
\end{array}%
\right) \right\Vert \\
&\leq &\sup_{\theta \leq 0}\left[ e^{\eta \theta }e^{\lambda \theta
}\left\vert \frac{1}{\lambda }\left[ \varphi \left( 0\right) +\alpha \right]
\right\vert +e^{\eta \theta }\int_{\theta }^{0}e^{\lambda \left( \theta
-s\right) }\left\vert \varphi \left( s\right) \right\vert ds\right] \\
&\leq &\frac{1}{\lambda }\left\vert \alpha \right\vert +\sup_{\theta \leq 0}%
\left[ \left( \frac{e^{\eta \theta }e^{\lambda \theta }}{\lambda }+\frac{%
(1-e^{\eta \theta }e^{\lambda \theta })}{\lambda }\right) \left\Vert \varphi
\right\Vert _{\eta }\right] \\
&\leq &\frac{1}{\lambda }\left[ \left\vert \alpha \right\vert +\left\Vert
\varphi \right\Vert _{\eta }\right] \\
&=&\frac{1}{\lambda }\left\Vert \left( 
\begin{array}{c}
\alpha \\ 
\varphi%
\end{array}%
\right) \right\Vert .
\end{eqnarray*}%
Therefore, (\ref{2.2}) holds and the proof is completed.
\end{proof}
\begin{lemma}
\label{LE2.3}%
\begin{equation*}
\overline{D(A)}=\left\{ 0\right\} \times BUC_{\eta }.
\end{equation*}
\end{lemma}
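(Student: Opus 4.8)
The statement reduces to two inclusions. For ``$\subseteq$'': since $D(A)=\{0_{\mathbb R^{n}}\}\times BUC_{\eta}^{1}\subseteq\{0_{\mathbb R^{n}}\}\times BUC_{\eta}=X_{0}$ and $X_{0}$ is a closed linear subspace of $X$ (it is the kernel of the continuous projection onto the first factor), we immediately get $\overline{D(A)}\subseteq X_{0}$. So everything hinges on the reverse inclusion $X_{0}\subseteq\overline{D(A)}$, which amounts to showing that $BUC_{\eta}^{1}$ is dense in $(BUC_{\eta},\|\cdot\|_{\eta})$.

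To prove the density I would exploit the resolvent identity already in hand. Fix $\varphi\in BUC_{\eta}$. For $\lambda>0$, Lemma \ref{LE2.1} (with $\alpha=0$) shows that the second component of $\lambda(\lambda I-A)^{-1}\binom{0_{\mathbb R^{n}}}{\varphi}$ is the function
\[\psi_{\lambda}(\theta)=e^{\lambda\theta}\varphi(0)+\lambda\int_{\theta}^{0}e^{\lambda(\theta-l)}\varphi(l)\,dl ,\]
and since $(\lambda I-A)^{-1}$ maps $X$ into $D(A)=\{0_{\mathbb R^{n}}\}\times BUC_{\eta}^{1}$ we have $\psi_{\lambda}\in BUC_{\eta}^{1}$ for every $\lambda>0$. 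Thus it suffices to show $\|\psi_{\lambda}-\varphi\|_{\eta}\to 0$ as $\lambda\to+\infty$. After the change of variable $l=\theta+s/\lambda$ and using $\int_{-\lambda\theta}^{\infty}e^{-s}\,ds=e^{\lambda\theta}$, one rewrites
\[\psi_{\lambda}(\theta)-\varphi(\theta)=e^{\lambda\theta}\big(\varphi(0)-\varphi(\theta)\big)+\int_{0}^{-\lambda\theta}e^{-s}\big(\varphi(\theta+s/\lambda)-\varphi(\theta)\big)\,ds .\]
One then multiplies by $e^{\eta\theta}\le 1$ and estimates the two terms separately. For the first term split $\theta\in[-\delta,0]$, where $|\varphi(0)-\varphi(\theta)|$ is small by continuity of $\varphi$ at $0$, from $\theta<-\delta$, where $e^{\lambda\theta}\le e^{-\lambda\delta}\to 0$ kills the bounded factor $|\varphi(0)|+\|\varphi\|_{\eta}$. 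For the second term write $e^{\eta\theta}\big(\varphi(\theta+s/\lambda)-\varphi(\theta)\big)=\big(e^{\eta\theta}-e^{\eta(\theta+s/\lambda)}\big)\varphi(\theta+s/\lambda)+\big(e^{\eta(\theta+s/\lambda)}\varphi(\theta+s/\lambda)-e^{\eta\theta}\varphi(\theta)\big)$, bound the first piece by $(\eta s/\lambda)\|\varphi\|_{\eta}$ (via $1-e^{-\eta s/\lambda}\le \eta s/\lambda$ and $e^{\eta(\theta+s/\lambda)}|\varphi(\theta+s/\lambda)|\le\|\varphi\|_{\eta}$), bound the second piece by something small once $s/\lambda$ is small using the uniform continuity of $\theta\mapsto e^{\eta\theta}\varphi(\theta)$, dominate both by $2\|\varphi\|_{\eta}e^{-s}$, and pass to the limit since $\int_{0}^{\infty}e^{-s}\,ds=1$.

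The point that needs care — and the main obstacle — is precisely that the convergence must be uniform in $\theta\le 0$ for the weighted sup-norm, while the weight $e^{\eta\theta}$ degenerates as $\theta\to-\infty$: one cannot simply dominate $|\varphi|$ there, and it is the uniform continuity of the weighted function $e^{\eta\cdot}\varphi(\cdot)$, not of $\varphi$ itself, that makes the translation estimate uniform. This also suggests a cleaner route: conjugate by the isometry $\Psi:BUC_{\eta}\to BUC$, $\Psi(u)(\theta)=e^{\eta\theta}u(\theta)$. A direct computation gives $\Psi(BUC_{\eta}^{1})=BUC^{1}$, so density of $BUC_{\eta}^{1}$ in $BUC_{\eta}$ is equivalent to density of $BUC^{1}$ in $BUC$, which is the classical fact that a bounded, uniformly continuous function on $(-\infty,0]$ is a uniform limit of $C^{1}$ functions with bounded derivative (extend by the constant $\varphi(0)$ to $\mathbb R$ and mollify). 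I would present the resolvent computation as the main argument, since Lemmas \ref{LE2.1}--\ref{LE2.2} make it self-contained, and record the isometry reduction as a remark.
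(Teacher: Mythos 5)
Your proof is correct, but your primary argument takes a genuinely different route from the paper's. The paper's proof is essentially the two-line version of what you relegate to a closing remark: for $\psi\in BUC_{\eta}$ it sets $\psi_{\varepsilon}(\theta)=e^{-\eta\theta}\tfrac{1}{\varepsilon}\int_{\theta-\varepsilon}^{\theta}e^{\eta l}\psi(l)\,dl$, i.e.\ it conjugates by the isometry $\Psi$ and takes a one-sided Steklov average of the bounded uniformly continuous function $l\mapsto e^{\eta l}\psi(l)$; this lands in $BUC_{\eta}^{1}$ and converges in $\Vert\cdot\Vert_{\eta}$ precisely because of the uniform continuity of the \emph{weighted} function. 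Your main argument instead uses the resolvent formula of Lemma \ref{LE2.1} to show that $\lambda\left(\lambda I-A\right)^{-1}\to I$ strongly on $X_{0}$ as $\lambda\to+\infty$. The estimate you outline goes through: the change of variables and the splitting are right, the bound $e^{\eta(\theta+s/\lambda)}\left\vert\varphi(\theta+s/\lambda)\right\vert\leq\Vert\varphi\Vert_{\eta}$ is legitimate because $s\leq-\lambda\theta$ forces $\theta+s/\lambda\leq 0$, and you correctly identify that it is the uniform continuity of $e^{\eta\cdot}\varphi(\cdot)$, not of $\varphi$ itself, that makes the translation estimate uniform as $\theta\to-\infty$ (the last step is a dominated-convergence argument against the weight $e^{-s}$, with domination by $C(1+s)e^{-s}$ rather than $2\Vert\varphi\Vert_{\eta}e^{-s}$ for the $(\eta s/\lambda)\Vert\varphi\Vert_{\eta}$ piece, a harmless adjustment). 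As for what each approach buys: the paper's mollification is shorter and independent of Lemma \ref{LE2.1}; your resolvent computation is more work but establishes in this concrete setting the standard Hille--Yosida characterization $\overline{D(A)}=\{x:\lambda(\lambda I-A)^{-1}x\to x\}$, which is the mechanism underlying the integrated-semigroup machinery used later in the paper.
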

\begin{proof}
Let $\psi \in BUC_{\eta }\left( \left( -\infty ,0\right] ,\mathbb{R}%
^{n}\right) .$ Define for each $\varepsilon >0$ and each $\theta \leq 0$ 
\begin{equation*}
\psi _{\varepsilon }(\theta )=e^{-\eta \theta }\frac{1}{\varepsilon }%
\int_{\theta -\varepsilon }^{\theta }e^{\eta l}\psi \left( l\right) dl.
\end{equation*}%
Then since $l\rightarrow e^{\eta l}\psi \left( l\right) $ is bounded and
uniformly continuous we deduce that for each $\varepsilon >0$%
\begin{equation*}
\psi _{\varepsilon }\in BUC_{\eta }^{1}
\end{equation*}%
and 
\begin{equation*}
\lim_{\varepsilon \rightarrow 0}\left\Vert \psi _{\varepsilon }-\psi
\right\Vert _{\eta }=0\text{,}
\end{equation*}%
the proof is complete.\ 
\end{proof}

\section{A special class of mild solutions}

In this section in order to obtain an explicit formula for the integrated
solution of the abstract Cauchy problem (\ref{1.8}) we firstly consider the
following PDE 
\begin{equation}
\left\{ 
\begin{array}{l}
\partial _{t}u(t,\theta )-\partial _{\theta }u(t,\theta )=0,\text{ for }%
\theta \leq 0\text{ and }t\geq 0 \\ 
\partial _{\theta }u(t,0)=h(t),\text{ for }t\geq 0 \\ 
u\left( 0,.\right) =\varphi \in BUC_{\eta }%
\end{array}%
\right.  \label{3.1}
\end{equation}%
where the map $h\in L^{1}((0,\tau );\mathbb{R}^{n})$ is a given perturbation
at the boundary.

In that case the system (\ref{1.8}) becomes 
\begin{equation}
\frac{dv(t)}{dt}=Av(t)+\left( 
\begin{array}{c}
h(t) \\ 
0%
\end{array}%
\right) ,t\geq 0,\text{ }v(0)=\left( 
\begin{array}{c}
0_{\mathbb{R}^{n}} \\ 
\varphi%
\end{array}%
\right) \in \overline{D(A)},  \label{3.2}
\end{equation}%
where $h\in L^{1}\left( \left( 0,\tau \right) ,\mathbb{R}^{n}\right) $.

Recall that $v\in C\left( \left[ 0,\tau \right] ,X\right) $ is an integrated
solution of (\ref{3.2}) if and only if 
\begin{equation}
\int_{0}^{t}v(s)ds\in D(A),\forall t\in \left[ 0,\tau \right]  \label{3.3}
\end{equation}%
and 
\begin{equation}
v(t)=\left( 
\begin{array}{c}
0_{\mathbb{R}^{n}} \\ 
\varphi%
\end{array}%
\right) +A\int_{0}^{t}v(s)ds+\int_{0}^{t}\left( 
\begin{array}{c}
h(s) \\ 
0%
\end{array}%
\right) ds.  \label{3.4}
\end{equation}%
We refer to Arendt \cite{Arendt87b)}, Thieme \cite{Thieme90b}, Kellermann
and Hieber \cite{Kellermann}, and the book by Arendt et al.\ \cite{Arendt01}
for a nice overview on this subject.\ We also refer to Magal and Ruan \cite%
{Magal-Ruan18} for more results and updated references.\ 

From (\ref{3.2}) we note that if $v$ is an integrated solution we must have 
\begin{equation*}
v(t)=\lim_{h\rightarrow 0^{+}}\frac{1}{h}\int_{t}^{t+h}v(s)ds\in \overline{%
D(A)}.
\end{equation*}%
Hence 
\begin{equation*}
v(t)=\left( 
\begin{array}{c}
0_{\mathbb{R}^{n}} \\ 
u(t)%
\end{array}%
\right)
\end{equation*}%
with 
\begin{equation*}
u\in C\left( \left[ 0,\tau \right] ,BUC_{\eta }\right) .
\end{equation*}%
In order to obtain the uniqueness of the integrated solutions of (\ref{3.2})
we want to prove that $A$ generates an integrated semigroup.\ So firstly we
need to study the resolvent of $A.$ Since $A$ is a Hille-Yosida operator, $A$
generates a non-degenerated integrated semigroup $\left\{ S_{A}(t)\right\}
_{t\geq 0}$ on $X.\ $It follows from Thieme \cite{Thieme90b}, \ and
Kellerman and Hieber that the abstract Cauchy problem (\ref{3.2}) has at
most one integrated solution.

\begin{lemma}
\label{LE3.1} Let $h\in L^{1}\left( \left( 0,\tau \right) ,\mathbb{R}%
^{n}\right) $ and $\varphi \in BUC_{\eta }$. Then there exists $t\rightarrow
v(t)$ a unique integrated solution of the Cauchy problem (\ref{3.2}).
Moreover $v(t)$ is explicitly given by the following formula 
\begin{equation*}
v(t)=\left( 
\begin{array}{c}
0_{\mathbb{R}^{n}} \\ 
u(t)%
\end{array}%
\right)
\end{equation*}%
with 
\begin{equation}
u(t)\left( \theta \right) =x(t+\theta ),\forall t\in \left[ 0,\tau \right]
,\forall \theta \leq 0,  \label{3.5}
\end{equation}%
where 
\begin{equation*}
x(t)=\left\{ 
\begin{array}{l}
\varphi \left( 0\right) +\int_{0}^{t}h(s)ds,\text{ if }t\in \left[ 0,\tau %
\right] , \\ 
\varphi \left( t\right) ,\text{ if }t\leq 0.%
\end{array}%
\right.
\end{equation*}
\end{lemma}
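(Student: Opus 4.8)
The plan is to verify directly that the proposed formula for $v(t) = \left(\begin{array}{c} 0_{\mathbb{R}^n} \\ u(t)\end{array}\right)$ satisfies the two defining conditions (\ref{3.3}) and (\ref{3.4}) of an integrated solution; uniqueness is already granted, since $A$ is a Hille-Yosida operator and hence generates a non-degenerate integrated semigroup, so at most one integrated solution exists. The candidate $u(t)(\theta) = x(t+\theta)$ with $x$ given by the stated piecewise formula is clearly continuous in $t$ with values in $BUC_\eta$ (one checks, as the paper already noted, that $x_0 \in BUC_\eta$ forces $x_t \in BUC_\eta$, and $t \mapsto x_t$ is continuous because $x$ is continuous on $(-\infty,\tau]$ and $h \in L^1$ makes $t \mapsto \varphi(0)+\int_0^t h$ continuous).

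First I would compute $\int_0^t v(s)\,ds = \left(\begin{array}{c} 0_{\mathbb{R}^n} \\ w(t)\end{array}\right)$ where $w(t)(\theta) = \int_0^t u(s)(\theta)\,ds = \int_0^t x(s+\theta)\,ds$, and check $w(t) \in BUC_\eta^1$, i.e. that $\left(\begin{array}{c}0_{\mathbb{R}^n}\\ w(t)\end{array}\right) \in D(A)$; this gives (\ref{3.3}). The key computation is $\frac{d}{d\theta} w(t)(\theta) = \int_0^t \frac{d}{d\theta} x(s+\theta)\,ds$, which by the fundamental theorem of calculus in the shifted variable equals $\int_0^t \partial_s [x(s+\theta)]\,ds = x(t+\theta) - x(\theta) = u(t)(\theta) - \varphi(\theta)$ (using $x(\theta) = \varphi(\theta)$ for $\theta \le 0$, at least when $t+\theta \le 0$; when $t+\theta > 0$ the same telescoping holds, splitting the integral at $s = -\theta$ and using $x' = h$ on $[0,\tau]$). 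So $w(t)' = u(t) - \varphi$, which lies in $BUC_\eta$, and in particular $w(t)'(0) = u(t)(0) - \varphi(0) = x(t) - \varphi(0) = \int_0^t h(s)\,ds$.

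Then I would assemble the right-hand side of (\ref{3.4}): applying the definition (\ref{1.7}) of $A$,
\begin{equation*}
A\int_0^t v(s)\,ds = A\left(\begin{array}{c} 0_{\mathbb{R}^n} \\ w(t)\end{array}\right) = \left(\begin{array}{c} -w(t)'(0) \\ w(t)'\end{array}\right) = \left(\begin{array}{c} -\int_0^t h(s)\,ds \\ u(t) - \varphi\end{array}\right),
\end{equation*}
and therefore
\begin{equation*}
\left(\begin{array}{c} 0_{\mathbb{R}^n} \\ \varphi\end{array}\right) + A\int_0^t v(s)\,ds + \int_0^t \left(\begin{array}{c} h(s) \\ 0\end{array}\right) ds = \left(\begin{array}{c} -\int_0^t h(s)\,ds + \int_0^t h(s)\,ds \\ \varphi + u(t) - \varphi\end{array}\right) = \left(\begin{array}{c} 0_{\mathbb{R}^n} \\ u(t)\end{array}\right) = v(t),
\end{equation*}
which is exactly (\ref{3.4}). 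Uniqueness then finishes the proof.

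The main obstacle is the regularity bookkeeping in the step $w(t) \in BUC_\eta^1$ and the identification $w(t)' = u(t) - \varphi$: one must be careful that $x$ is only Lipschitz (not $C^1$) on $[0,\tau]$ because $h$ is merely $L^1$, so the differentiation under the integral sign is justified through the absolutely continuous representation $x(t+\theta) = x(\theta) + \int_0^{\max(t+\theta,0)} h$, and one should track the cases $t+\theta \le 0$ versus $t+\theta > 0$ separately when differentiating in $\theta$. Once this is set up cleanly, verifying membership in the weighted space $BUC_\eta$ (using uniform continuity of $\theta \mapsto e^{\eta\theta}\varphi(\theta)$ and boundedness) is routine, and the rest is the short algebraic verification above.
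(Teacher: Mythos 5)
Your proposal is correct and follows essentially the same route as the paper: invoke uniqueness from the Hille--Yosida property, then verify directly that $\int_0^t u(l)\,dl$, rewritten as $\theta\mapsto\int_\theta^{t+\theta}x(s)\,ds$, lies in $BUC_\eta^1$ with derivative $x(t+\cdot)-x(\cdot)=u(t)-\varphi$, and plug into the integrated-solution identity. Your extra care about $x$ being only absolutely continuous on $[0,\tau]$ is a reasonable (if slightly overcautious) refinement of the same computation, since continuity of $x$ already suffices for the fundamental theorem of calculus applied to $w(t)$.
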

\begin{proof}
Since $A$ is a Hille-Yosida operator, there is at most one integrated
solution of the Cauchy problem (\ref{3.2}).\ So it is sufficient to prove
that $u$ defined by (\ref{3.5}) satisfies for each $t\in \left[ 0,\tau %
\right] $ the following 
\begin{equation}
\left( 
\begin{array}{c}
0_{\mathbb{R}^{n}} \\ 
\int_{0}^{t}u(l)dl%
\end{array}%
\right) \in D(A)  \label{3.6}
\end{equation}%
and 
\begin{equation}
\left( 
\begin{array}{c}
0_{\mathbb{R}^{n}} \\ 
u(t)%
\end{array}%
\right) =\left( 
\begin{array}{c}
0_{\mathbb{R}^{n}} \\ 
\varphi%
\end{array}%
\right) +A\left( 
\begin{array}{c}
0_{\mathbb{R}^{n}} \\ 
\int_{0}^{t}u(l)dl%
\end{array}%
\right) +\left( 
\begin{array}{c}
\int_{0}^{t}h(l)dl \\ 
0%
\end{array}%
\right) .  \label{3.7}
\end{equation}%
Since 
\begin{equation*}
\int_{0}^{t}u(l)\left( \theta \right) dl=\int_{0}^{t}x(l+\theta
)dl=\int_{\theta }^{t+\theta }x(s)ds
\end{equation*}%
and therefore $\int_{0}^{t}u(l)dl\in BUC_{\eta }^{1}$ and (\ref{3.6})
follows. Moreover 
\begin{eqnarray*}
A\left( 
\begin{array}{c}
0 \\ 
\int_{0}^{t}u(l)dl%
\end{array}%
\right) &=&\left( 
\begin{array}{c}
-\left( x(t)-x(0)\right) \\ 
\left( x(t+.)-x(.)\right)%
\end{array}%
\right) \\
&=&-\left( 
\begin{array}{c}
0 \\ 
\varphi%
\end{array}%
\right) +\left( 
\begin{array}{c}
-\left( x(t)-\varphi (0)\right) \\ 
x(t+.)%
\end{array}%
\right) .
\end{eqnarray*}%
Therefore, (\ref{3.7}) is satisfied if and only if 
\begin{equation}
x(t)=\varphi (0)+\int_{0}^{t}h(s)ds.  \label{3.8}
\end{equation}%
The proof is completed.
\end{proof}

\section{Linear abstract Cauchy problem}

\bigskip For a given bounded linear operator $L\in \mathcal{L}\left(
X\right) ,$ \textrm{\ }$\left\Vert L\right\Vert _{ess}$ is the essential
norm of $L$ defined by 
\begin{equation*}
\left\Vert L\right\Vert _{ess}=\kappa \left( L\left( B_{X}\left( 0,1\right)
\right) \right) ,
\end{equation*}%
here $B_{X}\left( 0,1\right) =\left\{ x\in X:\left\Vert x\right\Vert
_{X}\leq 1\right\} ,$ and for each bounded set $B\subset X,$ $\kappa \left(
B\right) =\inf \left\{ \varepsilon >0:B\text{ can be covered by a finite
number of balls of radius }\leq \varepsilon \right\} $ is the Kuratovsky
measure of non-compactness. Let $L:D(L)\subset X\rightarrow X$ be the
infinitesimal generator of a linear $C_{0}$-semigroup $\left\{
T_{L}(t)\right\} _{t\geq 0}$ on a Banach space $X.$ Define the growth bound $%
\omega _{0}\left( L\right) \in \lbrack -\infty ,+\infty )$ of $L$ by 
\begin{equation*}
\omega _{0}\left( L\right) :=\lim_{t\rightarrow +\infty }\frac{\ln \left(
\left\Vert T_{L}(t)\right\Vert _{\mathcal{L}\left( X\right) }\right) }{t}.
\end{equation*}%
The essential growth bound $\omega _{0,ess}\left( L\right) \in \left[
-\infty ,+\infty \right) $ of\ $L$ is defined by 
\begin{equation*}
\omega _{0,ess}\left( L\right) :=\lim_{t\rightarrow +\infty }\frac{\ln
\left( \left\Vert T_{L}(t)\right\Vert _{ess}\right) }{t}.
\end{equation*}%
Recall that $A_{0}:D\left( A_{0}\right) \subset \overline{D(A)}\rightarrow 
\overline{D(A)}$ the part of $A$ in $\overline{D(A)}$ is defined by 
\begin{equation*}
A_{0}\left( 
\begin{array}{c}
0_{\mathbb{R}^{n}} \\ 
\varphi%
\end{array}%
\right) =\left( 
\begin{array}{c}
0_{\mathbb{R}^{n}} \\ 
\varphi ^{\prime }%
\end{array}%
\right) ,\forall \left( 
\begin{array}{c}
0_{\mathbb{R}^{n}} \\ 
\varphi%
\end{array}%
\right) \in D\left( A_{0}\right) ,
\end{equation*}%
where 
\begin{equation*}
D\left( A_{0}\right) =\left\{ \left( 
\begin{array}{c}
0_{\mathbb{R}^{n}} \\ 
\varphi%
\end{array}%
\right) \in \left\{ 0_{\mathbb{R}^{n}}\right\} \times BUC_{\eta
}^{1}:\varphi ^{\prime }(0)=0\right\} .
\end{equation*}%
Now by using the fact that $A$ is a Hille-Yosida operator, we deduce that $%
A_{0}$ is the infinitesimal generator of a strongly continuous semigroup $%
\left\{ T_{A_{0}}(t)\right\} _{t\geq 0}$ and $v(t)=T_{A_{0}}(t)\left( 
\begin{array}{c}
0_{\mathbb{R}^{n}} \\ 
\varphi%
\end{array}%
\right) $ is an integrated solution of 
\begin{equation*}
\frac{dv(t)}{dt}=Av(t),t\geq 0,\text{ }v(0)=\left( 
\begin{array}{c}
0_{\mathbb{R}^{n}} \\ 
\varphi%
\end{array}%
\right) \in \overline{D(A)}.
\end{equation*}%
Using Lemma \ref{LE3.1} with $h=0,$ we obtain the following result.\ 

\begin{lemma}
\label{LE4.1}The linear operator $A_{0}$ is the infinitesimal generator of a
strongly continuous semigroup $\left\{ T_{A_{0}}(t)\right\} _{t\geq 0}$ of
bounded linear operators on $\overline{D(A)}$ which is defined by 
\begin{equation}
T_{A_{0}}(t)\left( 
\begin{array}{c}
0_{\mathbb{R}^{n}} \\ 
\varphi%
\end{array}%
\right) =\left( 
\begin{array}{c}
0_{\mathbb{R}^{n}} \\ 
\widehat{T}_{A_{0}}(t)\varphi%
\end{array}%
\right) ,  \label{4.1}
\end{equation}%
where%
\begin{equation*}
\widehat{T}_{A_{0}}(t)(\varphi )(\theta )=\left\{ 
\begin{array}{l}
\varphi (0),\text{ if }t+\theta \geq 0, \\ 
\varphi (t+\theta ),\text{ if }t+\theta \leq 0.%
\end{array}%
\right.
\end{equation*}
\end{lemma}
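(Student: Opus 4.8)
The plan is to read the formula straight off Lemma~\ref{LE3.1}: the existence of the $C_0$-semigroup $\{T_{A_0}(t)\}_{t\geq 0}$ has already been secured by the Hille--Yosida property proved in Lemmas~\ref{LE2.1}--\ref{LE2.3}, via the standard theory of non-densely defined operators (Thieme \cite{Thieme90b}, Magal and Ruan \cite{Magal-Ruan07, Magal-Ruan18}). That theory also tells us that for every $x\in\overline{D(A)}$ the orbit $t\mapsto T_{A_0}(t)x$ is the unique integrated solution of $\frac{dv}{dt}=Av$, $v(0)=x$ --- which is exactly the observation recorded in the paragraph preceding the statement.

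First I would apply Lemma~\ref{LE3.1} with the boundary perturbation $h\equiv 0$ and an arbitrary $\varphi\in BUC_\eta$. Since $\int_0^t h(s)\,ds=0$, the lemma produces the unique integrated solution of \eqref{3.2} in the form $v(t)=\binom{0_{\mathbb{R}^n}}{u(t)}$ with $u(t)(\theta)=x(t+\theta)$, where $x(t)=\varphi(0)$ for $t\geq 0$ and $x(t)=\varphi(t)$ for $t\leq 0$. As $\tau>0$ is arbitrary, uniqueness glues these solutions into one defined for all $t\geq 0$, and comparing with $v(t)=T_{A_0}(t)\binom{0_{\mathbb{R}^n}}{\varphi}$ forces $T_{A_0}(t)\binom{0_{\mathbb{R}^n}}{\varphi}=\binom{0_{\mathbb{R}^n}}{u(t)}$.

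It then remains only to rewrite $u(t)(\theta)=x(t+\theta)$ by splitting on the sign of $t+\theta$: if $t+\theta\geq 0$ then $x(t+\theta)=\varphi(0)$, whereas if $t+\theta\leq 0$ then $x(t+\theta)=\varphi(t+\theta)$. This is precisely the announced expression for $\widehat{T}_{A_0}(t)\varphi$, and it yields \eqref{4.1}.

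I do not anticipate a real obstacle; the route through Lemma~\ref{LE3.1} is short and does all the work. The only points worth a glance are the internal consistency of the formula as a semigroup --- $\widehat{T}_{A_0}(0)=I$, the identity $\widehat{T}_{A_0}(t+s)=\widehat{T}_{A_0}(t)\widehat{T}_{A_0}(s)$, the bound $\|\widehat{T}_{A_0}(t)\varphi\|_\eta\leq\|\varphi\|_\eta$, and strong continuity --- but each is either immediate from the explicit formula or already contained in Section~3, where one knows $u\in C([0,\tau],BUC_\eta)$. An alternative, self-contained argument would bypass Lemma~\ref{LE3.1} and instead verify these semigroup properties by hand and then compute the generator, checking that its domain is $\{0_{\mathbb{R}^n}\}\times BUC_\eta^1$ with the constraint $\varphi'(0)=0$ and that it acts by $\varphi\mapsto\varphi'$; but this is longer than necessary.
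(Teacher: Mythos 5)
Your proposal is correct and follows exactly the paper's own route: the paper derives Lemma \ref{LE4.1} by invoking the Hille--Yosida property of $A$ to get the semigroup generated by $A_0$, identifying its orbits with integrated solutions of the homogeneous problem, and then applying Lemma \ref{LE3.1} with $h\equiv 0$ to read off the explicit formula. Nothing essential is missing.
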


The semigroup $\left\{ T_{A_{0}}(t)\right\} _{t\geq 0}$ can be rewritten as
follows 
\begin{equation}
T_{A_{0}}(t)\left( 
\begin{array}{c}
0_{\mathbb{R}^{n}} \\ 
\varphi%
\end{array}%
\right) =\left( 
\begin{array}{c}
0_{\mathbb{R}^{n}} \\ 
T\varphi +S(t)(\varphi )%
\end{array}%
\right) ,  \label{4.2}
\end{equation}%
where $T\varphi =\varphi (0)$ and 
\begin{equation*}
S(t)(\varphi )(\theta )=\left\{ 
\begin{array}{l}
0,\text{ if }t+\theta \geq 0, \\ 
\varphi (t+\theta )-\varphi (0),\text{ if }t+\theta \leq 0.%
\end{array}%
\right.
\end{equation*}%
Note that $T$ is a finite-rank operator and thus compact. For each $t\geq 0,$
we have 
\begin{equation*}
\sup_{\theta \leq -t}e^{\eta \theta }\left\Vert S(t)(\varphi )(\theta
)\right\Vert \leq 2e^{-\eta t}\left\Vert \varphi \right\Vert _{\eta }.
\end{equation*}%
Thus we obtain the following lemma.

\begin{lemma}
\label{LE4.2}The essential growth bound of $A_{0}$ satisfies%
\begin{equation*}
\omega _{0,ess}(A_{0})\leq -\eta .
\end{equation*}
\end{lemma}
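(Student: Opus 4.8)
The strategy is to use the decomposition $T_{A_{0}}(t) = T + S(t)$ already exhibited in equation (\ref{4.2}), where $T$ is the finite-rank (hence compact) operator $\varphi \mapsto (0,\varphi(0))^{\mathsf{T}}$ and $S(t)$ is the ``shifted'' remainder. Since the essential norm $\|\cdot\|_{ess}$ vanishes on compact operators and is subadditive, we have $\|T_{A_{0}}(t)\|_{ess} = \|S(t)\|_{ess} \le \|S(t)\|_{\mathcal{L}(X)}$ for every $t \ge 0$. Therefore it suffices to bound the operator norm of $S(t)$ and then pass to the limit in the definition of $\omega_{0,ess}$.

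The main computation is the estimate of $\|S(t)\varphi\|_{\eta}$. Writing $\|S(t)\varphi\|_{\eta} = \sup_{\theta \le 0} e^{\eta\theta}\|S(t)(\varphi)(\theta)\|$, I would split the supremum into the range $-t < \theta \le 0$, on which $S(t)(\varphi)(\theta) = 0$ by definition, and the range $\theta \le -t$, on which $S(t)(\varphi)(\theta) = \varphi(t+\theta) - \varphi(0)$. On the first range the contribution is $0$. On the second range, using $e^{\eta\theta} = e^{-\eta t} e^{\eta(t+\theta)}$ together with $\|\varphi(t+\theta)\| \le e^{-\eta(t+\theta)}\|\varphi\|_{\eta}$ (valid since $t+\theta \le 0$) and $e^{\eta\theta}\|\varphi(0)\| \le e^{-\eta t}\|\varphi\|_{\eta}$ (since $e^{\eta\theta} \le e^{-\eta t}$ there and $\|\varphi(0)\| \le \|\varphi\|_{\eta}$), one gets exactly the bound already displayed in the paper, namely $e^{\eta\theta}\|S(t)(\varphi)(\theta)\| \le 2e^{-\eta t}\|\varphi\|_{\eta}$. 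Hence $\|S(t)\|_{\mathcal{L}(X)} \le 2e^{-\eta t}$.

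Combining, $\|T_{A_{0}}(t)\|_{ess} \le 2e^{-\eta t}$ for all $t \ge 0$, so
\begin{equation*}
\omega_{0,ess}(A_{0}) = \lim_{t \to +\infty} \frac{\ln \|T_{A_{0}}(t)\|_{ess}}{t} \le \lim_{t \to +\infty} \frac{\ln 2 - \eta t}{t} = -\eta,
\end{equation*}
which is the claim. There is no real obstacle here: the only point requiring a little care is the norm estimate on $S(t)$, and even that has been essentially carried out in the paragraph preceding the lemma — the lemma is really just packaging that inequality together with the compactness of $T$ and the standard fact that $\|K\|_{ess} = 0$ for compact $K$. One should simply make explicit that $\omega_{0,ess}$ is computed from $\|T_{A_{0}}(t)\|_{ess}$ rather than the full norm, and invoke subadditivity of the essential norm (equivalently, of the Kuratowski measure of noncompactness under the decomposition $T_{A_{0}}(t)(B_X) \subset T(B_X) + S(t)(B_X)$).
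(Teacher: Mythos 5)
Your proposal is correct and follows exactly the paper's own route: the decomposition $T_{A_{0}}(t)=T+S(t)$ with $T$ compact (finite rank) and the bound $\left\Vert S(t)\right\Vert _{\mathcal{L}(X)}\leq 2e^{-\eta t}$ are precisely what the paragraph preceding the lemma establishes, and your passage to $\omega _{0,ess}(A_{0})\leq -\eta$ via subadditivity of the essential norm is the intended (implicit) conclusion. Nothing to add.
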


The above lemma is crucial in order to apply some compact perturbation
results (see Ducrot et al. \cite{Ducrot-Liu-Magal08}).

Since $A$ is a Hille-Yosida operator, we know that $A$ generates an
integrated semigroup $\left\{ S_{A}(t)\right\} _{t\geq 0}$ on $X,$ and $%
t\rightarrow S_{A}(t)\left( 
\begin{array}{c}
x \\ 
\varphi%
\end{array}%
\right) $ is an integrated solution of 
\begin{equation*}
\frac{dv(t)}{dt}=Av(t)+\left( 
\begin{array}{c}
x \\ 
\varphi%
\end{array}%
\right) ,t\geq 0,\text{ }v(0)=0.
\end{equation*}%
Since $S_{A}\left( t\right) $ is linear we have 
\begin{equation*}
S_{A}(t)\left( 
\begin{array}{c}
x \\ 
\varphi%
\end{array}%
\right) =S_{A}(t)\left( 
\begin{array}{c}
0_{\mathbb{R}^{n}} \\ 
\varphi%
\end{array}%
\right) +S_{A}(t)\left( 
\begin{array}{c}
x \\ 
0%
\end{array}%
\right) ,
\end{equation*}%
where 
\begin{equation*}
S_{A}(t)\left( 
\begin{array}{c}
0_{\mathbb{R}^{n}} \\ 
\varphi%
\end{array}%
\right) =\int_{0}^{t}T_{A_{0}}(l)\left( 
\begin{array}{c}
0_{\mathbb{R}^{n}} \\ 
\varphi%
\end{array}%
\right) dl
\end{equation*}%
and $S_{A}(t)\left( 
\begin{array}{c}
x \\ 
0%
\end{array}%
\right) $ is an integrated solution of 
\begin{equation*}
\frac{dv(t)}{dt}=Av(t)+\left( 
\begin{array}{c}
x \\ 
0%
\end{array}%
\right) ,t\geq 0,\text{ }v(0)=0.
\end{equation*}%
Therefore, by using Lemma \ref{LE3.1} with $h(t)=x$ and the above results$,$
we obtain the following result.\ 

\begin{lemma}
\label{LE4.3}The linear operator $A$ generates an integrated semigroup $%
\left\{ S_{A}(t)\right\} _{t\geq 0}$ on $X$. Moreover, we have the following explicit formula 
\begin{equation*}
S_{A}(t)\left( 
\begin{array}{c}
x \\ 
\varphi%
\end{array}%
\right) =\left( 
\begin{array}{c}
0_{\mathbb{R}^{n}} \\ 
\widehat{S}_{A}(t)\left( x,\varphi \right)%
\end{array}%
\right) ,\left( 
\begin{array}{c}
x \\ 
\varphi%
\end{array}%
\right) \in X,
\end{equation*}%
where $\widehat{S}_{A}(t)$ is the linear operator defined by%
\begin{equation*}
\widehat{S}_{A}(t)\left( x,\varphi \right) =\widehat{S}_{A}(t)\left(
0,\varphi \right) +\widehat{S}_{A}(t)\left( x,0\right)
\end{equation*}%
with 
\begin{equation*}
\widehat{S}_{A}(t)\left( 0,\varphi \right) \left( \theta \right)
=\int_{0}^{t}T\varphi +S(l)(\varphi )dl
\end{equation*}%
and 
\begin{equation*}
\widehat{S}_{A}(t)\left( x,0\right) \left( \theta \right) =\left\{ 
\begin{array}{l}
(t+\theta )x,\text{ if }t+\theta \geq 0, \\ 
0,\text{ if }t+\theta \leq 0.%
\end{array}%
\right.
\end{equation*}
\end{lemma}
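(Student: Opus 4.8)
The plan is to obtain the existence of the integrated semigroup from the general theory of Hille--Yosida operators and then to pin it down explicitly by reusing the computation of Lemma~\ref{LE3.1}. Since $A$ is a Hille--Yosida operator by Lemma~\ref{LE2.2}, the results of Thieme~\cite{Thieme90b} and Kellermann and Hieber~\cite{Kellermann} (see also Arendt et al.~\cite{Arendt01} and Magal and Ruan~\cite{Magal-Ruan18}) give at once that $A$ generates a non-degenerate integrated semigroup $\left\{ S_{A}(t)\right\}_{t\geq 0}$ on $X$, uniquely characterized by the property that, for every $w\in X$, the map $t\mapsto S_{A}(t)w$ is the unique integrated solution of $dv(t)/dt=Av(t)+w$ with $v(0)=0$. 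Since each $S_{A}(t)$ is a bounded linear operator, I would split $\binom{x}{\varphi}=\binom{0}{\varphi}+\binom{x}{0}$ and treat $S_{A}(t)\binom{x}{0}$ and $S_{A}(t)\binom{0}{\varphi}$ separately.

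For $S_{A}(t)\binom{x}{0}$ I would apply Lemma~\ref{LE3.1} with initial datum $\varphi\equiv 0$ and constant boundary perturbation $h(s)\equiv x$, which belongs to $L^{1}((0,\tau);\mathbb{R}^{n})$ for every $\tau>0$. The lemma then yields that the unique integrated solution of $dv/dt=Av+\binom{x}{0}$, $v(0)=0$, is $v(t)=\binom{0}{u(t)}$, where $u(t)(\theta)$ equals the value at $t+\theta$ of the function that is $s\mapsto sx$ on $[0,\tau]$ and $0$ on $(-\infty,0]$; this is precisely the asserted $\widehat{S}_{A}(t)(x,0)$.

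For $S_{A}(t)\binom{0}{\varphi}$, the key observation is that $\binom{0}{\varphi}\in\overline{D(A)}$, and I claim that in this case $S_{A}(t)\binom{0}{\varphi}=\int_{0}^{t}T_{A_{0}}(l)\binom{0}{\varphi}\,dl$. To check this, set $w(t):=\int_{0}^{t}T_{A_{0}}(l)\binom{0}{\varphi}\,dl$. Since $A_{0}$ is the part of $A$ in $\overline{D(A)}$ and generates the $C_{0}$-semigroup $\left\{ T_{A_{0}}(t)\right\}_{t\geq 0}$ (Lemma~\ref{LE4.1}), the standard identity for $C_{0}$-semigroups gives, for each $s\geq 0$, that $\int_{0}^{s}T_{A_{0}}(l)\binom{0}{\varphi}\,dl\in D(A_{0})\subset D(A)$ with $A\int_{0}^{s}T_{A_{0}}(l)\binom{0}{\varphi}\,dl=T_{A_{0}}(s)\binom{0}{\varphi}-\binom{0}{\varphi}$. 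Using that $A$ is closed and that $s\mapsto A\int_{0}^{s}T_{A_{0}}(l)\binom{0}{\varphi}\,dl$ is continuous, $A$ commutes with the outer integral, so $\int_{0}^{t}w(s)\,ds\in D(A)$ and $A\int_{0}^{t}w(s)\,ds=\int_{0}^{t}\left(T_{A_{0}}(s)\binom{0}{\varphi}-\binom{0}{\varphi}\right)ds=w(t)-t\binom{0}{\varphi}$; equivalently $w(t)=A\int_{0}^{t}w(s)\,ds+t\binom{0}{\varphi}$, which is exactly the identity defining the integrated solution of $dv/dt=Av+\binom{0}{\varphi}$, $v(0)=0$. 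By the uniqueness of integrated solutions (a consequence of the Hille--Yosida property), $w(t)=S_{A}(t)\binom{0}{\varphi}$. Substituting the decomposition $T_{A_{0}}(l)\binom{0}{\varphi}=\binom{0}{T\varphi+S(l)(\varphi)}$ from (\ref{4.2}) and integrating over $l\in[0,t]$ gives $\widehat{S}_{A}(t)(0,\varphi)=\int_{0}^{t}\left(T\varphi+S(l)(\varphi)\right)dl$. Adding the two contributions and using linearity of $S_{A}(t)$ yields the stated formula.

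The one point that needs a bit of care is the interchange of the closed operator $A$ with the outer integral in this last step; it rests on the standard fact that a closed operator commutes with the integral of a continuous $D(A)$-valued function whose image under $A$ is also continuous, so I anticipate only routine verifications there and no genuine obstacle. Everything else follows directly from Lemmas~\ref{LE3.1} and~\ref{LE4.1} together with the abstract integrated-semigroup machinery for Hille--Yosida operators.
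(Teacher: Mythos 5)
Your proposal is correct and follows essentially the same route as the paper, which derives the lemma from the decomposition $\binom{x}{\varphi}=\binom{0}{\varphi}+\binom{x}{0}$, the identity $S_{A}(t)\binom{0}{\varphi}=\int_{0}^{t}T_{A_{0}}(l)\binom{0}{\varphi}\,dl$, and Lemma~\ref{LE3.1} applied with $h(t)=x$. The only difference is that you verify in detail the standard identification of $S_{A}(t)$ on $\overline{D(A)}$ with the integral of the semigroup, which the paper simply invokes as known from integrated semigroup theory.
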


Now we focus on the spectrums of $A$ and $A+L.$ Since $A$ is a Hille-Yosida
operator and $L$ is a bounded linear operator, we can get that $A+L$ is a
Hille-Yosida operator$.$ Moreover $\left( A+L\right) _{0}:D(\left(
A+L\right) _{0})\subset \overline{D(A)}\rightarrow \overline{D(A)}$ the part
of $A+L$ in $\overline{D(A)}$ is the linear operator defined by 
\begin{equation*}
\left( A+L\right) _{0}\left( 
\begin{array}{c}
0 \\ 
\varphi%
\end{array}%
\right) =\left( 
\begin{array}{c}
0 \\ 
\varphi ^{\prime }%
\end{array}%
\right) ,\forall \left( 
\begin{array}{c}
0 \\ 
\varphi%
\end{array}%
\right) \in D\left( \left( A+L\right) _{0}\right) ,
\end{equation*}%
where%
\begin{equation*}
D\left( \left( A+L\right) _{0}\right) =\left\{ \left( 
\begin{array}{c}
0 \\ 
\varphi%
\end{array}%
\right) \in \left\{ 0_{\mathbb{R}^{n}}\right\} \times BUC_{\eta
}^{1}:\varphi ^{\prime }(0)=\widehat{L}\left( \varphi \right) \right\} .
\end{equation*}%
By Lemma \ref{LE2.1} in this article and Lemma 2.1 in Magal and Ruan \cite%
{Magal-Ruan09a}, we know that 
\begin{equation*}
\sigma \left( A\right) =\sigma \left( A_{0}\right) \text{ and }\sigma \left(
A+L\right) =\sigma \left( \left( A+L\right) _{0}\right)
\end{equation*}%
since $LT_{A_{0}}(t)$ is compact for any $t>0.$

By using the main result by Thieme in \cite{Thieme97} or by Ducrot et al. 
\cite{Ducrot-Liu-Magal08}, one obtain the following lemma.\ 

\begin{lemma}
\label{LE4.4}The essential growth bound of $\left( A+L\right) _{0}$
satisfies 
\begin{equation*}
\omega _{0,ess}\left( \left( A+L\right) _{0}\right) \leq \omega
_{0,ess}(A_{0})\leq -\eta .
\end{equation*}
\end{lemma}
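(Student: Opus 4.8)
The plan is to derive the bound on $\omega_{0,ess}((A+L)_0)$ by writing $(A+L)_0$ as a perturbation of $A_0$ by the bounded operator $L$ and invoking a general perturbation theorem for essential growth bounds. More precisely, since $A$ is a Hille--Yosida operator (Lemma \ref{LE2.2}) with $\overline{D(A)}=X_0$, and $L\in\mathcal{L}(X)$, the operator $A+L$ is again Hille--Yosida, and its part $(A+L)_0$ in $X_0$ generates a $C_0$-semigroup $\{T_{(A+L)_0}(t)\}_{t\ge0}$ which satisfies the variation of constants formula
\begin{equation*}
T_{(A+L)_0}(t)\left(\begin{array}{c}0\\ \varphi\end{array}\right)
= T_{A_0}(t)\left(\begin{array}{c}0\\ \varphi\end{array}\right)
+\lim_{\lambda\to+\infty}\int_0^t T_{A_0}(t-s)\,\lambda(\lambda I-A)^{-1}\,L\,T_{(A+L)_0}(s)\left(\begin{array}{c}0\\ \varphi\end{array}\right)ds,
\end{equation*}
which is exactly the setting of Thieme \cite{Thieme97} and of Ducrot, Liu and Magal \cite{Ducrot-Liu-Magal08}.

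The key step is to verify the compactness hypothesis required by those perturbation results: one needs that the ``correction term'' above is, for each $t>0$, a compact operator, or at least that the essential growth of the perturbed semigroup does not exceed that of the unperturbed one. The relevant sufficient condition (already recorded in the excerpt) is that $L\,T_{A_0}(t)$ is compact for every $t>0$. This in turn follows from the explicit form of $T_{A_0}(t)$ in Lemma \ref{LE4.1}: writing $T_{A_0}(t)=\left(\begin{smallmatrix}0\\ T\varphi\end{smallmatrix}\right)+\left(\begin{smallmatrix}0\\ S(t)\varphi\end{smallmatrix}\right)$ as in \eqref{4.2}, the operator $L$ only sees the $BUC_\eta$ component, $L\left(\begin{smallmatrix}0\\\psi\end{smallmatrix}\right)=\left(\begin{smallmatrix}\widehat L(\psi)\\0\end{smallmatrix}\right)$, and $\widehat L\in\mathcal{L}(BUC_\eta,\mathbb{R}^n)$ is finite-rank, hence compact. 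Therefore $L\,T_{A_0}(t)$ is compact for every $t\ge0$ (indeed already $L$ itself is compact), and the perturbation theorem applies to give
\begin{equation*}
\omega_{0,ess}\big((A+L)_0\big)\le\omega_{0,ess}(A_0).
\end{equation*}
Combining this with Lemma \ref{LE4.2}, i.e. $\omega_{0,ess}(A_0)\le-\eta$, yields the chain of inequalities in the statement.

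The step I expect to be the main obstacle — or at least the one requiring the most care — is confirming that the hypotheses of the cited perturbation theorem are met in the \emph{non-densely defined} setting: Thieme's and Ducrot--Liu--Magal's framework is designed precisely for Hille--Yosida operators with non-dense domain, so one must check that $A_0$ and $(A+L)_0$ are the parts in $X_0=\overline{D(A)}$ (done above and in the excerpt), that the $C_0$-semigroups $T_{A_0}$ and $T_{(A+L)_0}$ exist (Lemma \ref{LE4.1} and the Hille--Yosida property of $A+L$), and that the integral in the variation of constants formula produces an operator with essential norm controlled by $\|L\,T_{A_0}(\cdot)\|$. Since $\widehat L$ is finite-rank this last point is immediate; if instead one only knew $\widehat L$ compact, one would additionally invoke the fact that $t\mapsto L\,T_{A_0}(t)$ is operator-norm continuous on $(0,\infty)$ (which follows from Lemma \ref{LE4.1}) so that the convolution term is a uniform limit of compact operators and hence compact. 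Either way the conclusion follows, and the proof reduces to citing \cite{Thieme97} and \cite{Ducrot-Liu-Magal08} after noting the compactness of $L$.
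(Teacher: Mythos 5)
Your argument is correct and takes essentially the same route as the paper: the paper proves Lemma \ref{LE4.4} precisely by invoking the bounded-perturbation results of Thieme \cite{Thieme97} and Ducrot--Liu--Magal \cite{Ducrot-Liu-Magal08}, the applicability of which rests on the compactness of $LT_{A_{0}}(t)$ (immediate here since $L$ has finite rank), and then combines this with Lemma \ref{LE4.2}. The only quibble is your parenthetical assertion that $t\mapsto LT_{A_{0}}(t)$ is operator-norm continuous on $(0,\infty)$ --- this is neither needed for your main argument nor obvious from Lemma \ref{LE4.1}, since left-composition of a merely strongly continuous semigroup with a compact operator need not be norm continuous --- but it plays no role in the proof as you have actually structured it.
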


In the following lemma, we start specifying the point spectrum of $\left(
A+L\right) _{0}$. Let 
\begin{equation*}
\Omega :=\{\lambda \in \mathbb{C}:{Re}(\lambda )> -\eta \}.
\end{equation*}
We now apply some results taken from Engel and Nagel \cite{Engel-Nagel} and
Webb \cite{Webb85,Webb87}.

\begin{lemma}
\label{LE4.5}The point spectrum of $\left( A+L\right) _{0}$ is the set 
\begin{equation*}
\sigma \left( A+L\right) \cap \Omega =\sigma _{P}\left( \left( A+L\right)
_{0}\right) \cap \Omega =\left\{ \lambda \in \Omega :\det \left( \Delta
\left( \lambda \right) \right) =0\right\} ,
\end{equation*}%
where 
\begin{equation}
\Delta \left( \lambda \right) =\lambda I-\widehat{L}\left( e^{\lambda
.}I\right) \in \rm{M}_n(\mathbb{C}) .  \label{4.3}
\end{equation}
\end{lemma}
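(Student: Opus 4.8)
The plan is to compute the point spectrum of $(A+L)_0$ directly from its definition, and then to upgrade the equality to $\sigma(A+L)\cap\Omega$ using the essential growth bound from Lemma \ref{LE4.4} together with the general principle (from Engel--Nagel or Webb) that above the essential spectral radius the spectrum of a $C_0$-semigroup generator consists only of isolated eigenvalues of finite multiplicity. First I would fix $\lambda\in\Omega$ and solve the eigenvalue equation $(A+L)_0\binom{0}{\varphi}=\lambda\binom{0}{\varphi}$ with $\binom{0}{\varphi}\in D((A+L)_0)$. Writing out the definition of $(A+L)_0$, this is the system $\varphi'=\lambda\varphi$ on $(-\infty,0]$ together with the boundary constraint $\varphi'(0)=\widehat L(\varphi)$. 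The ODE $\varphi'=\lambda\varphi$ forces $\varphi(\theta)=e^{\lambda\theta}c$ for some $c\in\mathbb{R}^n$ (or $\mathbb{C}^n$); the key point here is that this candidate eigenfunction actually lies in $BUC_\eta^1$, which holds precisely because $\mathrm{Re}(\lambda)>-\eta$ — this is exactly where the restriction to $\Omega$ enters. Substituting $\varphi=e^{\lambda\cdot}c$ into the boundary condition gives $\lambda c=\widehat L(e^{\lambda\cdot}I)c=\widehat L(e^{\lambda\cdot}c)$, i.e. $\Delta(\lambda)c=0$ with $\Delta(\lambda)$ as in \eqref{4.3}. Hence $\lambda$ is an eigenvalue of $(A+L)_0$ iff $\Delta(\lambda)$ has nontrivial kernel, i.e. iff $\det\Delta(\lambda)=0$; this establishes $\sigma_P((A+L)_0)\cap\Omega=\{\lambda\in\Omega:\det\Delta(\lambda)=0\}$.

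Next I would identify $\sigma(A+L)\cap\Omega$ with the point spectrum. By Lemma \ref{LE4.4}, $\omega_{0,ess}((A+L)_0)\le-\eta$, so for any $\lambda$ with $\mathrm{Re}(\lambda)>-\eta$, $\lambda$ lies outside the essential spectrum of $(A+L)_0$; by the standard spectral decomposition result for $C_0$-semigroup generators (Engel--Nagel \cite{Engel-Nagel}, Webb \cite{Webb85,Webb87}), on the half-plane $\Omega$ the spectrum of $(A+L)_0$ consists of at most a discrete set of isolated eigenvalues of finite algebraic multiplicity. Therefore $\sigma(A+L)\cap\Omega=\sigma_P((A+L)_0)\cap\Omega$. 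Finally, the identity $\sigma(A+L)=\sigma((A+L)_0)$ was already recorded in the paragraph preceding the lemma (via Lemma \ref{LE2.1} and Lemma 2.1 of Magal--Ruan \cite{Magal-Ruan09a}, using compactness of $LT_{A_0}(t)$), so $\sigma(A+L)\cap\Omega$ agrees with all three sets in the statement.

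I would then just need to verify the one computational subtlety flagged above in a little more detail, namely that $\theta\mapsto e^{\lambda\theta}c$ belongs to $BUC_\eta^1$ when $\mathrm{Re}(\lambda)>-\eta$: the map $\theta\mapsto e^{\eta\theta}e^{\lambda\theta}c=e^{(\eta+\lambda)\theta}c$ is bounded on $(-\infty,0]$ since $\mathrm{Re}(\eta+\lambda)>0$, and it is uniformly continuous there (being Lipschitz with a bounded derivative), and the same applies to its derivative $\lambda e^{\lambda\cdot}c$. I expect this to be the only place where real care is needed; the rest is a routine unwinding of the definitions. The main (mild) obstacle is therefore not a single hard step but rather making sure the boundary condition $\varphi'(0)=\widehat L(\varphi)$ is handled cleanly — in particular that $\widehat L(e^{\lambda\cdot}I)$ is meaningful as the matrix obtained by applying the bounded linear functional $\widehat L$ columnwise to $\theta\mapsto e^{\lambda\theta}I$, which is legitimate precisely because that function is an element of $BUC_\eta$ on $\Omega$.
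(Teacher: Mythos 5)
Your proposal is correct and follows essentially the same route as the paper: reduce $\sigma(A+L)\cap\Omega$ to the point spectrum of $(A+L)_0$ via the essential growth bound $\omega_{0,ess}((A+L)_0)\le-\eta$ and the Engel--Nagel/Webb spectral decomposition, then solve the eigenvalue ODE $\varphi'=\lambda\varphi$ with boundary condition $\varphi'(0)=\widehat L(\varphi)$ to arrive at $\Delta(\lambda)\varphi(0)=0$. Your extra check that $e^{\lambda\cdot}c\in BUC_\eta^1$ is a welcome detail the paper leaves implicit (though note the membership already holds at $\mathrm{Re}(\lambda)=-\eta$; the strict inequality defining $\Omega$ is really needed for the essential-spectrum argument, not for the eigenfunction to live in the space).
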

\begin{proof}
Let $\lambda \in \Omega $. Since $\omega _{0,ess}\left( \left( A+L\right)
_{0}\right) \leq -\eta $ it follows that 
\begin{equation*}
\sigma \left( A+L\right) _{0}\cap \Omega =\sigma _{P}\left( \left(
A+L\right) _{0}\right) \cap \Omega
\end{equation*}%
(see \cite{Engel-Nagel, Webb85,Webb87}). But $\lambda \in \sigma _{P}\left(
\left( A+L\right) _{0}\right) $ if and only if there exists $\left( 
\begin{array}{c}
0_{\mathbb{R}^{n}} \\ 
\varphi%
\end{array}%
\right) \in D\left( \left( A+L\right) _{0}\right) \setminus \left\{
0\right\} $ such that 
\begin{equation*}
\left( A+L\right) _{0}\left( 
\begin{array}{c}
0_{\mathbb{R}^{n}} \\ 
\varphi%
\end{array}%
\right) =\lambda \left( 
\begin{array}{c}
0_{\mathbb{R}^{n}} \\ 
\varphi%
\end{array}%
\right) .
\end{equation*}%
That is to say that $\lambda \in \sigma _{P}\left( \left( A+L\right)
_{0}\right) $ if and only if there exists $\varphi \in BUC_{\eta }^{1}\left(
(-\infty ,0],\mathbb{C}^{n}\right) \setminus \left\{ 0\right\} $ such that 
\begin{equation}
\varphi ^{\prime }\left( \theta \right) =\lambda \varphi \left( \theta
\right) ,\forall \theta \leq 0  \label{4.4}
\end{equation}%
and 
\begin{equation}
\varphi ^{\prime }(0)=\widehat{L}\left( \varphi \right) .  \label{4.5}
\end{equation}%
Equation (\ref{4.4}) is equivalent to 
\begin{equation}
\varphi \left( \theta \right) =e^{\lambda \theta }\varphi \left( 0\right)
,\forall \theta \leq 0.  \label{4.6}
\end{equation}%
Therefore, 
\begin{equation*}
\varphi \neq 0\Leftrightarrow \varphi \left( 0\right) \neq 0.
\end{equation*}%
By combining (\ref{4.5}) and (\ref{4.6}), we obtain%
\begin{equation*}
\lambda \varphi \left( 0\right) =\widehat{L}\left( e^{\lambda .}\varphi
(0)\right) .
\end{equation*}%
The proof is completed.
\end{proof}

From the discussion in this section, we obtain the following proposition.

\begin{proposition}
\label{PROP4.6}The linear operator $A+L:D(A)\rightarrow X$ is a Hille-Yosida
operator, and $\left( A+L\right) _{0}$ the part of $A+L$ in $\overline{D(A)}$
is the infinitesimal generator of a strongly continuous semigroup $\left\{
T_{\left( A+L\right) _{0}}(t)\right\} _{t\geq 0}$ of bounded linear
operators on $\overline{D(A)}$.
\end{proposition}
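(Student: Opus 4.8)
The plan is to recognize the statement as an instance of the bounded perturbation theorem for Hille--Yosida operators in the non-densely defined setting (Thieme \cite{Thieme90b}, Magal and Ruan \cite{Magal-Ruan07,Magal-Ruan09b,Magal-Ruan18}) and to organize its verification around the Hille--Yosida property of $A$ established in Lemma \ref{LE2.2}. First, note that $L$ extends to a bounded linear operator on all of $X$ by setting $L(\alpha ,\varphi )=(\widehat{L}(\varphi ),0)$, the right-hand side being independent of $\alpha $. Hence $D(A+L)=D(A)$, so $\overline{D(A+L)}=\overline{D(A)}=X_{0}=\{0_{\mathbb{R}^{n}}\}\times BUC_{\eta }$ by Lemma \ref{LE2.3}, and the part $(A+L)_{0}$ of $A+L$ in $X_{0}$ is exactly the operator displayed just before Lemma \ref{LE4.4}.

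Second, I would treat the resolvent. For $\lambda >\|L\|_{\mathcal{L}(X)}$, Lemma \ref{LE2.2} with $n=1$ gives $\|(\lambda I-A)^{-1}L\|_{\mathcal{L}(X)}\leq \|L\|_{\mathcal{L}(X)}/\lambda <1$, so $I-(\lambda I-A)^{-1}L$ is boundedly invertible on $X$; from the factorization $\lambda I-(A+L)=(\lambda I-A)[I-(\lambda I-A)^{-1}L]$ on $D(A)$ one deduces $(\|L\|_{\mathcal{L}(X)},+\infty )\subset \rho (A+L)$, the Neumann series
\begin{equation*}
(\lambda I-(A+L))^{-1}=\sum_{k\geq 0}[(\lambda I-A)^{-1}L]^{k}(\lambda I-A)^{-1},
\end{equation*}
and the bound $\|(\lambda I-(A+L))^{-1}\|_{\mathcal{L}(X)}\leq (\lambda -\|L\|_{\mathcal{L}(X)})^{-1}$.

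Third, and this is the technical core, I would build the $C_{0}$-semigroup generated by $(A+L)_{0}$ and deduce the bounds on higher resolvent powers. For $x\in X_{0}$ solve, by a contraction argument on $C([0,\tau ],X_{0})$ for $\tau $ small and then iterating in $t$, the fixed-point equation
\begin{equation*}
w(t)=T_{A_{0}}(t)x+\lim_{\mu \rightarrow +\infty }\int_{0}^{t}T_{A_{0}}(t-s)\,\mu (\mu I-A)^{-1}L\,w(s)\,ds;
\end{equation*}
the limit exists because $\mu (\mu I-A)^{-1}$ is uniformly bounded (Lemma \ref{LE2.2}) and converges strongly to the identity on $X_{0}$, and by uniqueness $T_{(A+L)_{0}}(t)x:=w(t)$ defines a strongly continuous semigroup. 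Since $\|T_{A_{0}}(t)\|\leq 1$ (Lemma \ref{LE4.1}) and $\|\mu (\mu I-A)^{-1}\|\leq 1$, Gronwall's inequality yields $\|T_{(A+L)_{0}}(t)\|_{\mathcal{L}(X_{0})}\leq e^{\|L\|_{\mathcal{L}(X)}\,t}$. One then checks that the generator of this semigroup is precisely $(A+L)_{0}$; I expect this generator-identification --- matching the boundary condition $\varphi ^{\prime }(0)=\widehat{L}(\varphi )$ in the domain --- to be the main obstacle, and it is cleanest to carry it out by verifying that both operators have the same resolvent $\int_{0}^{+\infty }e^{-\lambda t}T_{(A+L)_{0}}(t)\,dt$ on $X_{0}$. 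Finally, for $n\geq 2$ write $(\lambda I-(A+L))^{-n}=(\lambda I-(A+L)_{0})^{-(n-1)}(\lambda I-(A+L))^{-1}$, which is legitimate because $(\lambda I-(A+L))^{-1}$ maps $X$ into $X_{0}$; combining $\|(\lambda I-(A+L)_{0})^{-(n-1)}\|_{\mathcal{L}(X_{0})}\leq (\lambda -\|L\|_{\mathcal{L}(X)})^{-(n-1)}$ (Laplace transform of the semigroup bound) with the estimate from the second step gives
\begin{equation*}
\|(\lambda I-(A+L))^{-n}\|_{\mathcal{L}(X)}\leq (\lambda -\|L\|_{\mathcal{L}(X)})^{-n},\qquad \forall \lambda >\|L\|_{\mathcal{L}(X)},\ \forall n\geq 1,
\end{equation*}
so that $A+L$ is a Hille--Yosida operator, which completes the proof.
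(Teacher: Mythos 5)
Your proposal is correct and follows essentially the same route as the paper, which at this point simply invokes the bounded perturbation theorem for Hille--Yosida operators (Thieme, Kellermann--Hieber, Magal--Ruan) without writing out the argument; you have reproduced the standard proof of that theorem, with the Neumann series for the resolvent, the variation-of-constants fixed point for the semigroup, and the Laplace-transform bootstrap for the higher resolvent powers. One caveat on your third step: the existence of the limit $\lim_{\mu \rightarrow +\infty }\int_{0}^{t}T_{A_{0}}(t-s)\,\mu (\mu I-A)^{-1}L\,w(s)\,ds$ does \emph{not} follow from the strong convergence $\mu (\mu I-A)^{-1}\rightarrow I$ on $X_{0}$, because $Lw(s)$ lies in $\mathbb{R}^{n}\times \{0\}$, which is not contained in $X_{0}=\{0_{\mathbb{R}^{n}}\}\times BUC_{\eta }$ --- indeed, by Lemma \ref{LE2.1}, $\mu (\mu I-A)^{-1}(\alpha ,0)=(0,e^{\mu \cdot }\alpha )$, which is not even Cauchy in $BUC_{\eta }$ as $\mu \rightarrow +\infty $ --- so the convergence of the integral must instead be justified either by the Kellermann--Hieber theorem on convolutions with integrated semigroups of Hille--Yosida operators, or, more concretely here, by the explicit formula of Lemma \ref{LE3.1} applied with $h(s)=\widehat{L}$ of the second component of $w(s)$, which identifies the convolution term in closed form.
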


\section{Projectors on the eigenspaces}

Since $\omega _{0,ess}((A+L)_{0})\leq -\eta $, we obtain
that $\sigma \left( A+L\right) \cap \Omega $ is nonempty and finite and each 
$\lambda _{0}\in \sigma \left( A+L\right) \cap \Omega $ is a pole of $\left(
\lambda I-\left( A+L\right) \right) ^{-1}$of finite order $k_{0}\geq 1.$
This means that $\lambda _{0}$ is isolated in $\sigma \left( A+L\right) $
and the Laurent's expansion of the resolvent around $\lambda _{0}$ takes the
following form\ 
\begin{equation}
\left( \lambda I-\left( A+L\right) \right) ^{-1}=\sum_{n=-k_{0}}^{+\infty
}\left( \lambda -\lambda _{0}\right) ^{n}B_{n}^{\lambda _{0}}.  \label{5.1}
\end{equation}%
The bounded linear operator $B_{-1}^{\lambda _{0}}$ is the projector on the
generalized eigenspace of $\left( A+L\right) $ associated to $\lambda _{0}.$
The goal of this section is to provide a method to compute $B_{-1}^{\lambda
_{0}}.$

We remark that%
\begin{equation*}
\left( \lambda -\lambda _{0}\right) ^{k_{0}}\left( \lambda I-\left(
A+L\right) \right) ^{-1}=\sum_{m=0}^{+\infty }\left( \lambda -\lambda
_{0}\right) ^{m}B_{m-k_{0}}^{\lambda _{0}}.
\end{equation*}%
So we have the following approximation formula%
\begin{equation}
B_{-1}^{\lambda _{0}}=\lim_{\lambda \rightarrow \lambda _{0}}\frac{1}{\left(
k_{0}-1\right) !}\frac{d^{k_{0}-1}}{d\lambda ^{k_{0}-1}}\left( \left(
\lambda -\lambda _{0}\right) ^{k_{0}}\left( \lambda I-\left( A+L\right)
\right) ^{-1}\right) .  \label{5.2}
\end{equation}

In order to give an explicit formula for $B_{-1}^{\lambda _{0}}$, we need
the following results. The proof of the lemma is similar to the proof of
Lemma 4.1 in Liu Magal and Ruan \cite{Liu-Magal-Ruan08}.

\begin{lemma}
\label{LE5.1} For each $\lambda \in \rho \left( A+L\right) ,$ we have the
following explicit formula for the resolvent of $A+L$%
\begin{equation}
\begin{array}{l}
\left( \lambda I-\left( A+L\right) \right) ^{-1}\left( 
\begin{array}{c}
\alpha \\ 
\varphi%
\end{array}%
\right) =\left( 
\begin{array}{c}
0_{\mathbb{R}^{n}} \\ 
\psi%
\end{array}%
\right) \\ 
\Leftrightarrow \\ 
\psi \left( \theta \right) =\int_{\theta }^{0}e^{\lambda \left( \theta
-s\right) }\varphi \left( s\right) ds+e^{\lambda \theta }\Delta \left(
\lambda \right) ^{-1}\left[ 
\begin{array}{c}
\alpha +\varphi \left( 0\right) + \\ 
\widehat{L}\left( \int_{.}^{0}e^{\lambda \left( .-s\right) }\varphi \left(
s\right) ds\right)%
\end{array}%
\right] .%
\end{array}
\label{5.3}
\end{equation}%
Furthermore, we have that%
\begin{eqnarray*}
\sigma \left( A+L\right) \cap \Omega &=&\sigma \left( \left( A+L\right)
_{0}\right) \cap \Omega \\
&=&\sigma _{P}\left( \left( A+L\right) _{0}\right) \cap \Omega \\
&=&\left\{ \lambda \in \Omega :\det \left( \Delta \left( \lambda \right)
\right) =0\right\} .
\end{eqnarray*}
\end{lemma}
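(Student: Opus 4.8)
\textbf{Proof proposal for Lemma \ref{LE5.1}.}

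The plan is to derive the resolvent formula directly, by solving the equation $(\lambda I - (A+L))\binom{0}{\psi} = \binom{\alpha}{\varphi}$ for $\binom{0}{\psi} \in D(A) = \{0\}\times BUC_\eta^1$, exactly as was done for $A$ alone in the proof of Lemma \ref{LE2.1}, and then reading off the point spectrum as the set where the formula breaks down. First I would unwind the definition of $A+L$: the equation is equivalent to the system $\psi'(0) - \widehat L(\psi) = \alpha$ together with $\lambda \psi - \psi' = \varphi$ on $(-\infty,0]$. The second (ODE) relation, solved backward from $\theta = 0$, gives $\psi(\theta) = e^{\lambda\theta}\psi(0) + \int_\theta^0 e^{\lambda(\theta - s)}\varphi(s)\,ds$ for all $\theta \le 0$; one must check this stays in $BUC_\eta^1$, which holds precisely because $\mathrm{Re}(\lambda) > -\eta$ when $\lambda \in \Omega$ — but in fact for the formula to define an element of $BUC_\eta$ we only need $\lambda$ in the resolvent set, and the integral term is fine since $\varphi \in BUC_\eta$. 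Substituting $\theta = 0$ into the first relation, using $\psi'(0) = \lambda\psi(0) - \varphi(0)$ and the linearity of $\widehat L$, yields
\begin{equation*}
\lambda \psi(0) - \varphi(0) - \widehat L\!\left(e^{\lambda\cdot}\psi(0) + \int_\cdot^0 e^{\lambda(\cdot - s)}\varphi(s)\,ds\right) = \alpha,
\end{equation*}
i.e. $\Delta(\lambda)\,\psi(0) = \alpha + \varphi(0) + \widehat L\left(\int_\cdot^0 e^{\lambda(\cdot - s)}\varphi(s)\,ds\right)$, where $\Delta(\lambda) = \lambda I - \widehat L(e^{\lambda\cdot}I)$ as in \eqref{4.3}. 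When $\Delta(\lambda)$ is invertible this determines $\psi(0)$ uniquely and gives formula \eqref{5.3}; conversely the computation shows that if \eqref{5.3} holds then $\binom{0}{\psi}$ solves the resolvent equation.

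For the spectral identity I would argue as follows. The chain $\sigma(A+L)\cap\Omega = \sigma((A+L)_0)\cap\Omega$ is already established in Section 4 from $\sigma(A+L) = \sigma((A+L)_0)$ (the compactness of $LT_{A_0}(t)$), and $\sigma((A+L)_0)\cap\Omega = \sigma_P((A+L)_0)\cap\Omega = \{\lambda\in\Omega : \det\Delta(\lambda) = 0\}$ is exactly the content of Lemma \ref{LE4.5} together with $\omega_{0,ess}((A+L)_0)\le -\eta$ from Lemma \ref{LE4.4}. So this part is a matter of quoting what is already proved. The new content beyond the resolvent formula is really just the observation that the formula \eqref{5.3} is valid on all of $\rho(A+L)$, and that the obstruction to inverting — noninvertibility of $\Delta(\lambda)$ — can only occur inside $\Omega$ for the part of the spectrum we care about; outside $\Omega$ the resolvent still exists by the Hille–Yosida property, but one does not need an explicit formula there.

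The main obstacle, such as it is, is bookkeeping rather than conceptual: one must verify that for $\lambda \in \rho(A+L)$ the function $\psi$ defined by the right-hand side of \eqref{5.3} genuinely lies in $BUC_\eta^1$ and that $\binom{0}{\psi} \in D(A)$, so that applying $A+L$ to it is legitimate. The term $e^{\lambda\theta}\Delta(\lambda)^{-1}[\cdots]$ is the delicate one: its membership in $BUC_\eta$ requires controlling $e^{\eta\theta}e^{\lambda\theta}$ as $\theta\to-\infty$, which is automatic when $\mathrm{Re}(\lambda) > -\eta$ but for general $\lambda\in\rho(A+L)$ needs the abstract fact that $(\lambda I-(A+L))^{-1}$ does map into $D(A)\subset X_0$ — so the explicit formula must agree with the abstract resolvent, which we know exists. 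I would therefore present the derivation in the ``$\Leftrightarrow$'' form: assuming $\binom{0}{\psi}$ is the resolvent applied to $\binom{\alpha}{\varphi}$, the computation forces \eqref{5.3}; and since the resolvent exists (Hille–Yosida), \eqref{5.3} is then automatically a well-defined element of $D(A)$. This sidesteps a separate regularity check and mirrors the structure of the proof of Lemma 4.1 in Liu, Magal and Ruan \cite{Liu-Magal-Ruan08} cited in the statement.
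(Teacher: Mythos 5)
Your derivation is correct and follows exactly the route the paper intends: the paper gives no proof of Lemma \ref{LE5.1} beyond referring to Lemma 4.1 of Liu, Magal and Ruan \cite{Liu-Magal-Ruan08}, which is precisely the direct resolvent computation you carry out (solve $\lambda\psi-\psi'=\varphi$ backward from $\theta=0$, substitute $\psi(0)$ into the boundary relation to produce $\Delta(\lambda)$), while the spectral chain is obtained by quoting Lemmas \ref{LE4.4} and \ref{LE4.5} as you do. Your observation that the right-hand side of \eqref{5.3} only visibly defines an element of $BUC_{\eta}$ when $\mathrm{Re}(\lambda)>-\eta$ is a fair caveat about the statement itself rather than a gap in your argument.
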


\bigskip Now we introduce the following linear operators $\digamma
_{1}:X_{0}\rightarrow \mathbb{R}^{n}$ and $\digamma _{2}:\mathbb{R}%
^{n}\rightarrow X_{0}$ defined by 
\begin{equation*}
\digamma _{1}\left( 
\begin{array}{c}
0_{\mathbb{R}^{n}} \\ 
\varphi%
\end{array}%
\right) =\varphi (0),\text{ }\digamma _{2}\alpha =\left( 
\begin{array}{c}
\alpha _{\mathbb{R}^{n}} \\ 
0%
\end{array}%
\right) .
\end{equation*}%
From Lemma \ref{LE5.1}, we have 
\begin{eqnarray*}
\digamma _{1}\left( \lambda I-\left( A+L\right) \right) ^{-1}\text{ }%
\digamma _{2}\alpha &=&\Delta \left( \lambda \right) ^{-1}\alpha , \\
\forall \lambda &\in &\{\lambda \in \rho (\left( A+L\right) ):{Re}(\lambda
)\geq \omega _{0,ess}\left( \left( A+L\right) _{0}\right) \}, \\
\forall \alpha &\in &\mathbb{R}^{n}.
\end{eqnarray*}%
Since $\lambda \rightarrow \left( \lambda I-\left( A+L\right) \right) ^{-1}$
is holomorphic from $\Omega $ into $\mathcal{L(}X\mathcal{)}$, we deduce
from the above formula that the map $\lambda \rightarrow \Delta \left(
\lambda \right) ^{-1}$ is holomorphic in $\Omega .$ We know that $\Delta
\left( \cdot \right) ^{-1}$ has only finite order poles with order $\widehat{%
k}_{0}\geq 1$. Therefore, $\Delta \left( \lambda \right) ^{-1}$ has the
Laurent's expansion around $\lambda _{0}$ and takes the following form%
\begin{equation*}
\Delta \left( \lambda \right) ^{-1}=\sum_{n=-\widehat{k}_{0}}^{+\infty
}\left( \lambda -\lambda _{0}\right) ^{n}\Delta _{n},\text{ }\Delta _{n}\in 
\mathcal{L(\mathbb{R}}^{n}\mathcal{)}.
\end{equation*}%
From the following lemma we know that $\widehat{k}_{0}=k_{0}.$

\begin{lemma}
\label{LE5.2}Let $\lambda _{0}\in \sigma \left( A+L\right) \cap \Omega .$
Then the following are equivalent

\begin{itemize}
\item[\rm{(i)}] $\lambda _{0}$ is a pole of order $k_{0}$ of $\left( \lambda
I-\left( A+L\right) \right) ^{-1};$

\item[\rm{(ii)}] $\lambda _{0}$ is a pole of order $k_{0}$ of $\Delta \left(
\lambda \right) ^{-1};$

\item[\rm{(iii)}] $\lim_{\lambda \rightarrow \lambda _{0}}\left( \lambda -\lambda
_{0}\right) ^{k_{0}}\Delta \left( \lambda \right) ^{-1}\neq 0,$ and $%
\lim_{\lambda \rightarrow \lambda _{0}}\left( \lambda -\lambda _{0}\right)
^{k_{0}+1}\Delta \left( \lambda \right) ^{-1}=0.$
\end{itemize}
\end{lemma}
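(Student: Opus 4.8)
The plan is to prove the cycle of implications (i) $\Rightarrow$ (iii) $\Rightarrow$ (ii) $\Rightarrow$ (i), using the intertwining identity
\[
\digamma_{1}\left(\lambda I-(A+L)\right)^{-1}\digamma_{2}\alpha=\Delta(\lambda)^{-1}\alpha,
\]
which was established just before the statement and is valid for all $\lambda$ in a neighborhood of $\lambda_{0}$ (punctured at $\lambda_{0}$), together with the explicit resolvent formula \eqref{5.3} from Lemma \ref{LE5.1}. The equivalence (ii) $\Leftrightarrow$ (iii) is essentially the definition of a pole of order $k_{0}$ for a meromorphic $\mathrm{M}_{n}(\mathbb{C})$-valued function combined with the characterization via the nonvanishing of the leading Laurent coefficient; I would dispatch it quickly by noting that $\lim_{\lambda\to\lambda_{0}}(\lambda-\lambda_{0})^{k_{0}}\Delta(\lambda)^{-1}=\Delta_{-k_{0}}$ and $\lim_{\lambda\to\lambda_{0}}(\lambda-\lambda_{0})^{k_{0}+1}\Delta(\lambda)^{-1}=\Delta_{-k_{0}-1}=0$ always, so the two limit conditions in (iii) say exactly that the pole order $\widehat{k}_{0}$ equals $k_{0}$.

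The substantive step is the link between the order of the pole of the full resolvent $(\lambda I-(A+L))^{-1}$ and that of $\Delta(\lambda)^{-1}$. For (i) $\Rightarrow$ (iii): if $\lambda_{0}$ is a pole of order $k_{0}$ of the resolvent, then $(\lambda-\lambda_{0})^{k_{0}}(\lambda I-(A+L))^{-1}$ stays bounded (indeed extends holomorphically) near $\lambda_{0}$ while $(\lambda-\lambda_{0})^{k_{0}-1}(\lambda I-(A+L))^{-1}$ does not. Applying the bounded operators $\digamma_{1}$ on the left and $\digamma_{2}$ on the right preserves boundedness, so $(\lambda-\lambda_{0})^{k_{0}+1}\Delta(\lambda)^{-1}\to 0$; the nonvanishing of $\lim(\lambda-\lambda_{0})^{k_{0}}\Delta(\lambda)^{-1}$ requires knowing that the ``worst'' part of the resolvent singularity is actually seen through the sandwich $\digamma_{1}(\cdot)\digamma_{2}$. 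This is where I expect the main obstacle to lie: I would extract it from formula \eqref{5.3}, which shows that the only source of a pole in $\psi$ is the factor $\Delta(\lambda)^{-1}$ multiplying $[\alpha+\varphi(0)+\widehat{L}(\int_{.}^{0}e^{\lambda(.-s)}\varphi(s)ds)]$ — the term $\int_{\theta}^{0}e^{\lambda(\theta-s)}\varphi(s)ds$ is entire in $\lambda$. Hence $(\lambda-\lambda_{0})^{m}(\lambda I-(A+L))^{-1}$ is holomorphic near $\lambda_{0}$ if and only if $(\lambda-\lambda_{0})^{m}\Delta(\lambda)^{-1}$ is, which gives both directions (i) $\Leftrightarrow$ (ii) at once; and evaluating at $\varphi\equiv 0$, $\alpha$ ranging over $\mathbb{R}^{n}$, recovers exactly $\Delta(\lambda)^{-1}\alpha$ in the first coordinate, so no cancellation can occur.

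Concretely I would organize it as follows. First, record from \eqref{5.3} that $(\lambda I-(A+L))^{-1}\left(\begin{smallmatrix}\alpha\\ \varphi\end{smallmatrix}\right)=\left(\begin{smallmatrix}0\\ \psi_{\lambda}\end{smallmatrix}\right)$ with $\psi_{\lambda}=E(\lambda)+e^{\lambda\cdot}\Delta(\lambda)^{-1}R(\lambda)$, where $E(\lambda)$ and $R(\lambda)$ are holomorphic in a full neighborhood $U$ of $\lambda_{0}$ (as maps into $BUC_{\eta}$ and $\mathbb{R}^{n}$ respectively), uniformly in $\left(\begin{smallmatrix}\alpha\\ \varphi\end{smallmatrix}\right)$ in the unit ball. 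Second, conclude that for any integer $m\ge 1$, $\lambda\mapsto(\lambda-\lambda_{0})^{m}(\lambda I-(A+L))^{-1}$ is bounded (equivalently, holomorphic) on $U\setminus\{\lambda_{0}\}$ if and only if $\lambda\mapsto(\lambda-\lambda_{0})^{m}\Delta(\lambda)^{-1}$ is; the ``if'' is immediate from the displayed decomposition, and the ``only if'' follows by taking $\varphi\equiv 0$ and using that $\digamma_{1}\left(\begin{smallmatrix}0\\ e^{\lambda\cdot}\Delta(\lambda)^{-1}\alpha\end{smallmatrix}\right)=\Delta(\lambda)^{-1}\alpha$. This proves (i) $\Leftrightarrow$ (ii). Third, finish (ii) $\Leftrightarrow$ (iii) by the elementary Laurent-coefficient argument above. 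No step beyond the bookkeeping in the second item should be delicate, and that item is exactly a matter of reading off \eqref{5.3}.
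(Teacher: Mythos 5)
Your proposal is correct and follows exactly the route the paper takes: the paper's proof consists of the single remark that the lemma ``follows trivially from the explicit formula of the resolvent of $A+L$ obtained in Lemma \ref{LE5.1},'' and your decomposition $\psi_{\lambda}=E(\lambda)+e^{\lambda\cdot}\Delta(\lambda)^{-1}R(\lambda)$ together with the sandwich $\digamma_{1}(\cdot)\digamma_{2}$ is precisely the bookkeeping that remark leaves implicit. No further comment is needed.
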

\begin{proof}
\bigskip The proof follows trivially from the explicit formula of the
resolvent of $A+L\ $obtained in Lemma \ref{LE5.1}.
\end{proof}

\begin{lemma}
\label{LE5.3}The matrices $\Delta _{-1},...,\Delta _{-k_{0}}$ must satisfy 
\begin{equation*}
\Delta _{k_{0}}\left( \lambda _{0}\right) \left( 
\begin{array}{c}
\Delta _{-1} \\ 
\Delta _{-2} \\ 
\vdots \\ 
\Delta _{-k_{0}+1} \\ 
\Delta _{-k_{0}}%
\end{array}%
\right) =\left( 
\begin{array}{c}
0 \\ 
\vdots \\ 
0%
\end{array}%
\right)
\end{equation*}%
and 
\begin{equation*}
\left( 
\begin{array}{ccccc}
\Delta _{-k_{0}} & \Delta _{-k_{0}+1} & \cdots & \Delta _{-2} & \Delta _{-1}%
\end{array}%
\right) \Delta _{k_{0}}\left( \lambda _{0}\right) =\left( 
\begin{array}{ccc}
0 & \cdots & 0%
\end{array}%
\right) ,
\end{equation*}%
where 
\begin{equation*}
\Delta _{k_{0}}\left( \lambda _{0}\right) :=\left( 
\begin{array}{ccccc}
\Delta \left( \lambda _{0}\right) & \Delta ^{(1)}\left( \lambda _{0}\right)
& \Delta ^{(2)}\left( \lambda _{0}\right) /2! & \cdots & \Delta
^{(k_{0}-1)}\left( \lambda _{0}\right) /\left( k_{0}-1\right) ! \\ 
0 & \ddots & \ddots & \ddots & \vdots \\ 
\vdots & 0 & \ddots & \ddots & \Delta ^{(2)}\left( \lambda _{0}\right) /2!
\\ 
\vdots &  & \ddots & \ddots & \Delta ^{(1)}\left( \lambda _{0}\right) \\ 
0 & \cdots & \cdots & 0 & \Delta \left( \lambda _{0}\right)%
\end{array}%
\right) .
\end{equation*}
\end{lemma}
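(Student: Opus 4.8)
The identity to establish links the Laurent coefficients $\Delta_{-1},\dots,\Delta_{-k_0}$ of $\Delta(\lambda)^{-1}$ to the Taylor coefficients of $\Delta(\lambda)$ at $\lambda_0$, and it is simply the matrix form of the statement that the product $\Delta(\lambda)\,\Delta(\lambda)^{-1}=I$ (and likewise $\Delta(\lambda)^{-1}\Delta(\lambda)=I$) has no negative Laurent coefficients around $\lambda_0$. So the plan is to start from
$$
\Delta(\lambda)=\sum_{j=0}^{+\infty}(\lambda-\lambda_0)^j\,\frac{\Delta^{(j)}(\lambda_0)}{j!},
\qquad
\Delta(\lambda)^{-1}=\sum_{n=-k_0}^{+\infty}(\lambda-\lambda_0)^n\Delta_n,
$$
both valid in a punctured neighbourhood of $\lambda_0$ (the first because $\Delta$ is entire, being a polynomial minus $\widehat L(e^{\lambda\cdot}I)$; the second by Lemma~\ref{LE5.2}, which gives $\widehat k_0=k_0$), multiply the two series, and collect the coefficient of $(\lambda-\lambda_0)^m$ for $m=-k_0,\dots,-1$. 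Since the product equals $I$, every such coefficient vanishes, and writing these $k_0$ vanishing Cauchy-product relations as a single block-matrix equation produces exactly $\Delta_{k_0}(\lambda_0)\,(\Delta_{-1},\dots,\Delta_{-k_0})^{T}=0$. Doing the same with the product in the opposite order gives the companion left-multiplication identity.

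The key computational step is the bookkeeping of the Cauchy product. Writing $\Delta(\lambda)^{-1}\Delta(\lambda)=I$, the coefficient of $(\lambda-\lambda_0)^{m}$ is $\sum_{j\ge 0}\Delta_{m-j}\,\dfrac{\Delta^{(j)}(\lambda_0)}{j!}$, where $\Delta_{m-j}=0$ whenever $m-j<-k_0$. For $m=-1$ this reads $\sum_{j=0}^{k_0-1}\Delta_{-1-j}\,\dfrac{\Delta^{(j)}(\lambda_0)}{j!}=0$; for $m=-2$ it reads $\sum_{j=0}^{k_0-2}\Delta_{-2-j}\,\dfrac{\Delta^{(j)}(\lambda_0)}{j!}=0$; and so on down to $m=-k_0$, which gives just $\Delta_{-k_0}\Delta(\lambda_0)=0$. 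Stacking the row vector $(\Delta_{-k_0},\Delta_{-k_0+1},\dots,\Delta_{-1})$ against the upper-triangular Toeplitz block matrix $\Delta_{k_0}(\lambda_0)$ (whose $(p,q)$ block is $\Delta^{(q-p)}(\lambda_0)/(q-p)!$ for $q\ge p$ and $0$ otherwise) reproduces precisely these $k_0$ equations, one per block column — this is the second displayed identity in the lemma. For the first displayed identity one instead uses $\Delta(\lambda)\,\Delta(\lambda)^{-1}=I$ and stacks $(\Delta_{-1},\dots,\Delta_{-k_0})^{T}$ as a block column, obtaining $\Delta_{k_0}(\lambda_0)$ acting on the left; the vanishing negative coefficients of this product give the column-vector relation.

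The only real obstacle is checking that the block indexing of $\Delta_{k_0}(\lambda_0)$ matches the Cauchy-product relations in the order written, i.e.\ confirming that the $i$-th block row of the first equation (respectively the $i$-th block column of the second) corresponds to the coefficient of $(\lambda-\lambda_0)^{-i}$; this is routine once one writes out the $2\times 2$ and $3\times 3$ cases, and then it is transparent in general. One should also note that Lemma~\ref{LE5.1} is what guarantees the two expansions are compatible — the resolvent $(\lambda I-(A+L))^{-1}$ has a pole of order $k_0$ at $\lambda_0$, and via the formula \eqref{5.3} together with $\digamma_1(\lambda I-(A+L))^{-1}\digamma_2=\Delta(\lambda)^{-1}$ the pole orders agree, so $\Delta(\lambda)^{-1}$ genuinely starts at $(\lambda-\lambda_0)^{-k_0}$ with $\Delta_{-k_0}\neq 0$. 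No convergence issue arises since we only equate finitely many coefficients of power series that converge on a common punctured disc.
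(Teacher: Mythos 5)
Your proof is correct and is exactly the standard argument this lemma rests on (the paper states Lemma \ref{LE5.3} without proof, deferring to the analogous computation in Liu, Magal and Ruan \cite{Liu-Magal-Ruan08}): expand $\Delta(\lambda)$ as a Taylor series at $\lambda_0$ and $\Delta(\lambda)^{-1}$ as a Laurent series of order $k_0$ (legitimate by Lemma \ref{LE5.2}), form the Cauchy products for $\Delta(\lambda)\Delta(\lambda)^{-1}=I$ and $\Delta(\lambda)^{-1}\Delta(\lambda)=I$, and set the coefficients of $(\lambda-\lambda_0)^m$ to zero for $m=-k_0,\dots,-1$; these $k_0$ relations are precisely the block rows (respectively block columns) of the two displayed identities. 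Your index bookkeeping matches the upper-triangular Toeplitz structure of $\Delta_{k_0}(\lambda_0)$, so nothing is missing.
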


From the above results we can obtain the explicit formula for the projector $%
B_{-1}^{\lambda _{0}}$ on the generalized eigenspace associated to $\lambda
_{0}$ , which is given in the following proposition.

\begin{proposition}
\label{PROP5.4}Each $\lambda _{0}\in \sigma \left( \left( A+L\right) \right) 
$ with ${Re}(\lambda _{0})\geq \omega _{0,ess}\left( \left( A+L\right)
_{0}\right) $ is a pole of $\left( \lambda I-\left( A+L\right) \right) ^{-1}$%
of order $k_{0}\geq 1.$ Moreover $k_{0}$ is the only integer such that there
exists $\Delta _{-k_{0}}\in M_{n}\left( \mathbb{R}\right) $ with $\Delta
_{-k_{0}}\neq 0,$ such that 
\begin{equation*}
\Delta _{-k_{0}}=\lim_{\lambda \rightarrow \lambda _{0}}\left( \lambda
-\lambda _{0}\right) ^{k_{0}}\Delta \left( \lambda \right) ^{-1}.
\end{equation*}%
Furthermore the projector $B_{-1}^{\lambda _{0}}$ on the generalized
eigenspace of $(A+L)$ associated $\lambda _{0}$ is defined by the following
formula 
\begin{equation}
B_{-1}^{\lambda _{0}}\left( 
\begin{array}{c}
\alpha \\ 
\varphi%
\end{array}%
\right) =\left[ 
\begin{array}{c}
0_{\mathbb{R}^{n}} \\ 
\sum_{j=0}^{k_{0}-1}\frac{1}{j!}\Delta _{-1-j}L_{j}^{2}(\lambda _{0})\left( 
\begin{array}{c}
\alpha \\ 
\varphi%
\end{array}%
\right)%
\end{array}%
\right] ,  \label{5.4}
\end{equation}%
where 
\begin{equation*}
\Delta _{-j}=\lim_{\lambda \rightarrow \lambda _{0}}\frac{1}{\left(
k_{0}-j\right) !}\frac{d^{k_{0}-j}}{d\lambda ^{k_{0}-j}}\left( \left(
\lambda -\lambda _{0}\right) ^{k_{0}}\Delta \left( \lambda \right)
^{-1}\right) ,j=1,...,k_{0},
\end{equation*}%
\begin{equation*}
L_{0}^{2}\left( \lambda \right) \left( 
\begin{array}{c}
\alpha \\ 
\varphi%
\end{array}%
\right) =e^{\lambda \theta }\left[ \alpha +\varphi \left( 0\right) +\widehat{%
L}\left( \int_{.}^{0}e^{\lambda \left( .-s\right) }\varphi \left( s\right)
ds\right) \right] ,
\end{equation*}%
and 
\begin{eqnarray*}
L_{j}^{2}\left( \lambda \right) \left( 
\begin{array}{c}
\alpha \\ 
\varphi%
\end{array}%
\right) &=&\frac{d^{j}}{d\lambda ^{j}}\left[ L_{0}^{2}(\lambda )\left( 
\begin{array}{c}
\alpha \\ 
\varphi%
\end{array}%
\right) \right] \\
&=&\sum_{k=0}^{j}C_{j}^{k}\theta ^{k}e^{\lambda \theta }\frac{d^{j-k}}{%
d\lambda ^{j-k}}\left[ \alpha +\varphi \left( 0\right) +\widehat{L}\left(
\int_{.}^{0}e^{\lambda \left( .-s\right) }\varphi \left( s\right) ds\right) %
\right] ,\text{ }j\geq 1,
\end{eqnarray*}%
here%
\begin{eqnarray*}
&&\frac{d^{i}}{d\lambda ^{i}}\left[ \alpha +\varphi \left( 0\right) +%
\widehat{L}\left( \int_{.}^{0}e^{\lambda \left( .-s\right) }\varphi \left(
s\right) ds\right) \right] \\
&=&\widehat{L}\left( \int_{.}^{0}\left( .-s\right) ^{i}e^{\lambda \left(
.-s\right) }\varphi \left( s\right) ds\right) ,i\geq 1.
\end{eqnarray*}
\end{proposition}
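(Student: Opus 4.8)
The plan is to combine the abstract spectral facts recalled at the beginning of this section with the explicit resolvent formula of Lemma \ref{LE5.1} and a Leibniz-rule bookkeeping. The assertion that $\lambda_0$ is a pole of $\left(\lambda I-(A+L)\right)^{-1}$ of some finite order $k_0\geq 1$ is the standard consequence of $\omega_{0,ess}\left((A+L)_0\right)\leq -\eta$ (Lemma \ref{LE4.4}): on the half-plane $\{\mathrm{Re}(\lambda)>\omega_{0,ess}((A+L)_0)\}$ the spectrum of $A+L$ is discrete and each spectral value is a pole of the resolvent of finite order. By Lemma \ref{LE5.2} (equivalence of (i) and (iii)) this same $k_0$ is the unique integer for which $\Delta_{-k_0}:=\lim_{\lambda\to\lambda_0}(\lambda-\lambda_0)^{k_0}\Delta(\lambda)^{-1}$ exists and is nonzero, which gives the first part of the statement.

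For the projector, I would start from the approximation formula (\ref{5.2}) and substitute the resolvent formula of Lemma \ref{LE5.1}. For $\left(\begin{array}{c}\alpha\\\varphi\end{array}\right)\in X$, writing $\left(\lambda I-(A+L)\right)^{-1}\left(\begin{array}{c}\alpha\\\varphi\end{array}\right)=\left(\begin{array}{c}0_{\mathbb{R}^n}\\\psi(\lambda)\end{array}\right)$, Lemma \ref{LE5.1} gives
$$\psi(\lambda)(\theta)=\int_\theta^0 e^{\lambda(\theta-s)}\varphi(s)\,ds+\Delta(\lambda)^{-1}\,L_0^2(\lambda)\left(\begin{array}{c}\alpha\\\varphi\end{array}\right)(\theta),$$
where $L_0^2(\lambda)$ is the operator displayed in the statement (it indeed maps into $BUC_\eta$ for $\lambda\in\Omega$, since $e^{(\eta+\mathrm{Re}\lambda)\theta}\to 0$ as $\theta\to-\infty$). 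The first term is holomorphic in $\lambda$ near $\lambda_0$, so after applying $\tfrac{1}{(k_0-1)!}\tfrac{d^{k_0-1}}{d\lambda^{k_0-1}}\!\left((\lambda-\lambda_0)^{k_0}(\,\cdot\,)\right)$ and letting $\lambda\to\lambda_0$ it contributes nothing, because every term in its Leibniz expansion still carries a factor $(\lambda-\lambda_0)^{k_0-j}$ with $0\leq j\leq k_0-1$. Hence $B_{-1}^{\lambda_0}$ comes entirely from the term $\Delta(\lambda)^{-1}L_0^2(\lambda)$.

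It then remains to expand this surviving product by Leibniz' rule. Setting $L_j^2(\lambda):=\tfrac{d^j}{d\lambda^j}L_0^2(\lambda)$, differentiation under the integral sign produces the two auxiliary formulas for $L_j^2$ and for $\tfrac{d^i}{d\lambda^i}[\alpha+\varphi(0)+\widehat{L}(\cdots)]$ stated in the proposition. Since $P(\lambda):=(\lambda-\lambda_0)^{k_0}\Delta(\lambda)^{-1}$ and $L_0^2(\lambda)$ are both holomorphic near $\lambda_0$ with $P^{(m)}(\lambda_0)=m!\,\Delta_{m-k_0}$ (the definition of the $\Delta_{-j}$), the Leibniz rule gives
$$\frac{1}{(k_0-1)!}\lim_{\lambda\to\lambda_0}\frac{d^{k_0-1}}{d\lambda^{k_0-1}}\Big[P(\lambda)L_0^2(\lambda)\Big]=\sum_{j=0}^{k_0-1}\frac{\binom{k_0-1}{j}\,(k_0-1-j)!}{(k_0-1)!}\;\Delta_{-1-j}\,L_j^2(\lambda_0),$$
and the elementary identity $\binom{k_0-1}{j}(k_0-1-j)!/(k_0-1)!=1/j!$ turns the right-hand side into $\sum_{j=0}^{k_0-1}\tfrac{1}{j!}\Delta_{-1-j}L_j^2(\lambda_0)$, i.e. formula (\ref{5.4}). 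The only delicate point, rather than the arithmetic, is the justification that the holomorphic part of the resolvent really drops out and that the operator-valued Leibniz rule applies; this is routine once one records, from Lemma \ref{LE5.1} and the analyticity of $\lambda\mapsto\Delta(\lambda)^{-1}$ on $\Omega$ established earlier in this section, that all the factors are holomorphic (respectively meromorphic with a pole of order exactly $k_0$) near $\lambda_0$ in the relevant operator norms.
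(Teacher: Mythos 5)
Your proposal is correct and follows essentially the same route the paper intends: the paper gives no separate proof of Proposition \ref{PROP5.4} but derives it implicitly from the approximation formula (\ref{5.2}), the explicit resolvent formula of Lemma \ref{LE5.1}, and Lemma \ref{LE5.2}, which is exactly the substitution-plus-Leibniz computation you carry out (including the observation that the holomorphic term $\int_{\theta}^{0}e^{\lambda(\theta-s)}\varphi(s)\,ds$ is annihilated by the limit and the identity $\binom{k_{0}-1}{j}(k_{0}-1-j)!/(k_{0}-1)!=1/j!$). Your remarks on where holomorphy and membership in $BUC_{\eta}$ are needed are the right routine checks and match the framework of Lemma \ref{LE5.1} and the discussion preceding Lemma \ref{LE5.2}.
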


\section{Projector for a simple eigenvalue}

For Hopf bifurcation it is useful to get the projector for a simple
eigenvalue. In this section we study this case, i.e. $\lambda _{0}$ is a
simple eigenvalue of $(A+L).$ That is to say that $\lambda _{0}$ is pole of
order $1$ of the resolvent of $(A+L),$ and the dimension of the eigenspace
of $(A+L)$ associated to the eigenvalue $\lambda _{0}$ is $1.$

We know that $\lambda _{0}$ is a pole of order $1$ of the resolvent of $%
(A+L) $ if and only if there exists $\Delta _{-1}\neq 0,$ such that 
\begin{equation*}
\Delta _{-1}=\lim_{\lambda \rightarrow \lambda _{0}}\left( \lambda -\lambda
_{0}\right) \Delta \left( \lambda \right) ^{-1}.
\end{equation*}%
From Lemma \ref{LE5.3}, we have $\Delta _{-1}\Delta \left( \lambda
_{0}\right) =\Delta \left( \lambda _{0}\right) \Delta _{-1}=0.$ Hence 
\begin{equation*}
\Delta _{-1}\left[ B+\widehat{L}\left( e^{\lambda _{0}.}I\right) \right] =%
\left[ B+\widehat{L}\left( e^{\lambda _{0}.}I\right) \right] \Delta
_{-1}=\lambda _{0}\Delta _{-1}.
\end{equation*}%
Therefore, if $\dim \left[ N\left( \Delta \left( \lambda _{0}\right) \right) %
\right] =1,$ the rank of $\Delta _{-1}$ is $1$ and the dimension of the
eigenspace of\ $(A+L)$ associated to $\lambda _{0}$ is $1.$ Conversely, if $%
\dim \left[ N\left( \Delta \left( \lambda _{0}\right) \right) \right] >1,$
it is readly checked that the eigenspace of $(A+L)$ associated to $\lambda
_{0}$ is 
\begin{equation*}
\left\{ \left( 
\begin{array}{c}
0 \\ 
e^{\lambda _{0}\theta }x%
\end{array}%
\right) :x\in N\left( \Delta \left( \lambda _{0}\right) \right) \right\} .
\end{equation*}%
and $\lambda _{0}$ is not simple.\ 

In that case, there exist $V_{\lambda _{0}},W_{\lambda _{0}}\in \mathbb{C}%
^{n}\setminus \left\{ 0\right\} ,$ such that 
\begin{equation}
W_{\lambda _{0}}^{T}\Delta \left( \lambda _{0}\right) =0,\text{ and }\Delta
\left( \lambda _{0}\right) V_{\lambda _{0}}=0.  \label{6.1}
\end{equation}%
Hence 
\begin{equation}
\Delta _{-1}=V_{\lambda _{0}}W_{\lambda _{0}}^{T}.  \label{6.2}
\end{equation}%
Moreover since $B_{-1}^{\lambda _{0}}$ is a projector, we should have $%
B_{-1}^{\lambda _{0}}B_{-1}^{\lambda _{0}}=B_{-1}^{\lambda _{0}},$ i.e.%
\begin{eqnarray}
&&e^{\lambda _{0}\theta }\Delta _{-1}\left[ 
\begin{array}{c}
\Delta _{-1}\left[ \alpha +\varphi \left( 0\right) +\widehat{L}\left(
\int_{.}^{0}e^{\lambda _{0}\left( .-s\right) }\varphi \left( s\right)
ds\right) \right] \\ 
+\widehat{L}\left( \int_{.}^{0}e^{\lambda _{0}.}\Delta _{-1}\left[ 
\begin{array}{c}
\alpha +\varphi \left( 0\right) \\ 
+\widehat{L}\left( \int_{.}^{0}e^{\lambda _{0}\left( .-l\right) }\varphi
\left( l\right) dl\right)%
\end{array}%
\right] ds\right)%
\end{array}%
\right]  \label{6.3} \\
&=&e^{\lambda _{0}\theta }\Delta _{-1}\left[ \alpha +\varphi \left( 0\right)
+\widehat{L}\left( \int_{.}^{0}e^{\lambda _{0}\left( .-s\right) }\varphi
\left( s\right) ds\right) \right] .  \notag
\end{eqnarray}%
Taking $\varphi =0$ in (\ref{6.3}), we obtain 
\begin{equation}
\Delta _{-1}=\Delta _{-1}\left[ I+\widehat{L}\left( \int_{.}^{0}e^{\lambda
_{0}.}ds\right) \right] \Delta _{-1}.  \label{6.4}
\end{equation}%
Taking $\alpha =0$ in (\ref{6.3}), we obtain 
\begin{eqnarray*}
&&\Delta _{-1}\left[ \varphi \left( 0\right) +\widehat{L}\left(
\int_{.}^{0}e^{\lambda _{0}\left( .-s\right) }\varphi \left( s\right)
ds\right) \right] \\
&=&\Delta _{-1}\Delta _{-1}\left[ \varphi \left( 0\right) +\widehat{L}\left(
\int_{.}^{0}e^{\lambda _{0}\left( .-s\right) }\varphi \left( s\right)
ds\right) \right] \\
&&+\Delta _{-1}\widehat{L}\left( \int_{.}^{0}e^{\lambda _{0}.}\Delta _{-1}%
\left[ \varphi \left( 0\right) +\widehat{L}\left( \int_{.}^{0}e^{\lambda
_{0}\left( .-s\right) }\varphi \left( s\right) ds\right) \right] ds\right) \\
&=&\Delta _{-1}\left[ I+\widehat{L}\left( \int_{.}^{0}e^{\lambda
_{0}.}ds\right) \right] \Delta _{-1}\left[ \varphi \left( 0\right) +\widehat{%
L}\left( \int_{.}^{0}e^{\lambda _{0}\left( .-s\right) }\varphi \left(
s\right) ds\right) \right] .
\end{eqnarray*}%
Therefore, we obtain the following corollary.\ 

\begin{corollary}
\label{CO6.1}$\lambda _{0}\in \sigma \left( \left( A+L\right) \right) $ is a
simple eigenvalue of $\left( A+L\right) $ if and only if 
\begin{equation*}
\lim_{\lambda \rightarrow \lambda _{0}}\left( \lambda -\lambda _{0}\right)
^{2}\Delta \left( \lambda \right) ^{-1}=0
\end{equation*}%
and 
\begin{equation*}
\dim \left[ N\left( \Delta \left( \lambda _{0}\right) \right) \right] =1.
\end{equation*}%
Moreover the projector on the eigenspace associated to $\lambda _{0}$ is 
\begin{equation}
B_{-1}^{\lambda _{0}}\left( 
\begin{array}{c}
\alpha \\ 
\varphi%
\end{array}%
\right) =\left[ 
\begin{array}{c}
0_{\mathbb{R}^{n}} \\ 
e^{\lambda _{0}\theta }\Delta _{-1}\left[ \alpha +\varphi \left( 0\right) +%
\widehat{L}\left( \int_{.}^{0}e^{\lambda _{0}\left( .-s\right) }\varphi
\left( s\right) ds\right) \right]%
\end{array}%
\right] ,  \label{6.5}
\end{equation}%
where 
\begin{equation*}
\Delta _{-1}=V_{\lambda _{0}}W_{\lambda _{0}}^{T}
\end{equation*}%
with $V_{\lambda _{0}},W_{\lambda _{0}}\in \mathbb{C}^{n}\setminus \left\{
0\right\} $ are two vectors satisfying (\ref{6.1}) and 
\begin{equation*}
\Delta _{-1}=\Delta _{-1}\left[ I+\widehat{L}\left( \int_{.}^{0}e^{\lambda
_{0}.}ds\right) \right] \Delta _{-1}.
\end{equation*}
\end{corollary}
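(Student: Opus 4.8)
\textbf{Proof plan for Corollary \ref{CO6.1}.} The strategy is to specialize the general machinery of Proposition \ref{PROP5.4} to the order-one case and to read off the simplicity criterion from the structure of $\Delta(\lambda)^{-1}$ near $\lambda_0$. First I would record the equivalence, already established via Lemma \ref{LE5.2} and Proposition \ref{PROP5.4}, that $\lambda_0$ is a pole of order $k_0$ of $(\lambda I-(A+L))^{-1}$ if and only if it is a pole of order $k_0$ of $\Delta(\lambda)^{-1}$, characterized by $\lim_{\lambda\to\lambda_0}(\lambda-\lambda_0)^{k_0}\Delta(\lambda)^{-1}\neq 0$ and $\lim_{\lambda\to\lambda_0}(\lambda-\lambda_0)^{k_0+1}\Delta(\lambda)^{-1}=0$. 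Hence $k_0=1$ is equivalent to $\Delta_{-1}=\lim_{\lambda\to\lambda_0}(\lambda-\lambda_0)\Delta(\lambda)^{-1}\neq 0$ together with $\lim_{\lambda\to\lambda_0}(\lambda-\lambda_0)^{2}\Delta(\lambda)^{-1}=0$, which is exactly the first displayed condition in the corollary.

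Next I would address the eigenspace dimension. Using Lemma \ref{LE5.3} with $k_0=1$, the matrix $\Delta_{-1}$ satisfies $\Delta(\lambda_0)\Delta_{-1}=\Delta_{-1}\Delta(\lambda_0)=0$, so the columns of $\Delta_{-1}$ lie in $N(\Delta(\lambda_0))$ and the rows of $\Delta_{-1}$ annihilate the range of $\Delta(\lambda_0)$; equivalently $\Delta_{-1}$ maps into $N(\Delta(\lambda_0))$ and is zero on the range of $\Delta(\lambda_0)$. Combined with the (already derived in the running text) description of the eigenspace of $(A+L)$ associated to $\lambda_0$ as $\{(0,e^{\lambda_0\theta}x):x\in N(\Delta(\lambda_0))\}$, I get that the geometric multiplicity of $\lambda_0$ equals $\dim N(\Delta(\lambda_0))$. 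A pole of order $1$ of the resolvent is a semisimple eigenvalue, so $\lambda_0$ is \emph{simple} as an eigenvalue of $(A+L)$ (algebraic multiplicity one) precisely when, in addition to $k_0=1$, one has $\dim N(\Delta(\lambda_0))=1$; this is the second displayed condition. I would spell out both implications: if $\dim N(\Delta(\lambda_0))=1$ then $\Delta_{-1}\neq 0$ forces $\mathrm{rank}\,\Delta_{-1}=1$ and the generalized eigenspace has dimension one, while if $\dim N(\Delta(\lambda_0))>1$ the eigenspace itself already has dimension $>1$ and $\lambda_0$ cannot be simple.

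Finally I would write down the projector. Feeding $k_0=1$ into formula (\ref{5.4}) collapses the sum $\sum_{j=0}^{k_0-1}\frac{1}{j!}\Delta_{-1-j}L_j^2(\lambda_0)$ to the single term $\Delta_{-1}L_0^2(\lambda_0)$, and substituting the definition of $L_0^2(\lambda_0)$ yields exactly (\ref{6.5}). For the factorization $\Delta_{-1}=V_{\lambda_0}W_{\lambda_0}^{T}$: since $\mathrm{rank}\,\Delta_{-1}=1$ we may write $\Delta_{-1}=V_{\lambda_0}W_{\lambda_0}^{T}$ with $V_{\lambda_0},W_{\lambda_0}\in\mathbb{C}^{n}\setminus\{0\}$, and the relations $\Delta(\lambda_0)\Delta_{-1}=0$, $\Delta_{-1}\Delta(\lambda_0)=0$ translate into $\Delta(\lambda_0)V_{\lambda_0}=0$ and $W_{\lambda_0}^{T}\Delta(\lambda_0)=0$, i.e.\ (\ref{6.1}). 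The consistency relation $\Delta_{-1}=\Delta_{-1}[I+\widehat{L}(\int_{.}^{0}e^{\lambda_0 .}ds)]\Delta_{-1}$ is just (\ref{6.4}), obtained by imposing $B_{-1}^{\lambda_0}B_{-1}^{\lambda_0}=B_{-1}^{\lambda_0}$ (idempotency of the spectral projector) and evaluating at $\varphi=0$, as in the computation (\ref{6.3}) preceding the corollary. The only mild subtlety — and the step I would be most careful about — is the passage between "$\lambda_0$ is a pole of order one of the resolvent" and "algebraic multiplicity one": one must invoke that for an isolated eigenvalue with finite-dimensional spectral projector, pole order equals the size of the largest Jordan block, so order one plus one-dimensional kernel of $\Delta(\lambda_0)$ is what pins down algebraic multiplicity one; everything else is bookkeeping on the already-established formulas.
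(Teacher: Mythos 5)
Your proposal is correct and follows essentially the same route as the paper: the paper's argument is precisely the running text of Section 6, which combines Lemma \ref{LE5.2} (pole order of the resolvent equals pole order of $\Delta(\lambda)^{-1}$), Lemma \ref{LE5.3} (giving $\Delta(\lambda_0)\Delta_{-1}=\Delta_{-1}\Delta(\lambda_0)=0$ and hence the rank-one factorization $\Delta_{-1}=V_{\lambda_0}W_{\lambda_0}^{T}$), the specialization of (\ref{5.4}) to $k_0=1$ for formula (\ref{6.5}), and the idempotency computation (\ref{6.3})--(\ref{6.4}). You also correctly flag the one point the paper leaves implicit, namely that a first-order pole with finite-rank spectral projector forces the generalized eigenspace to coincide with the eigenspace, so that simplicity reduces to $\dim N(\Delta(\lambda_0))=1$.
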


\section{Nonlinear semiflow}

Remembering that $F:=L+G$ one may rewrite the abstract Cauchy problem (\ref%
{1.8}) as follows 
\begin{equation}
\frac{dU(t)x}{dt}=AU(t)x+F(U(t)x),\;\;t\geq 0,\;\;U(0)x=x:=\left( 
\begin{array}{c}
0 \\ 
\varphi%
\end{array}%
\right) \in X_{0}.  \label{7.1}
\end{equation}%
We shall investigate the properties of the semiflow generated by the mild
solution of (\ref{7.1}). Namely the continuous function $U(.)x:[0,\tau
]\rightarrow X_{0}$ satisfies the fixed point problem 
\begin{equation}
U(t)x=T_{A_{0}}(t)x+\left( S_{A}\diamond F(.,U(.,0)x)\right) (t),\;\;t\geq 0,
\label{7.2}
\end{equation}%
or equivalently 
\begin{equation}
U(t)x=x+A\int_{0}^{t}U(l)xdl+\int_{0}^{t}F(U(l)x)dl,\;\;t\geq 0.  \label{7.3}
\end{equation}%
By using Lemma \ref{LE3.1} with $h(t)=f(x_{t})$ we obtain the following
lemma.

\begin{lemma}
\label{LE7.1} $v \in C([0,\tau],X_0)$ is mild solution of the abstract
Cauchy problem (\ref{7.1}) is and only if 
\begin{equation*}
v(t)=\left( 
\begin{array}{c}
0 \\ 
u(t,.)%
\end{array}
\right)
\end{equation*}
with 
\begin{equation*}
u(t,\theta)=x\left( t+\theta \right),\forall t \geq 0, \forall \theta \leq 0,
\end{equation*}
and 
\begin{equation*}
x(t)=\left\{ 
\begin{array}{l}
\varphi(0)+\int_0^t f(x_s)ds, \text{ if } t \geq 0, \\ 
\varphi(t), \text{ if } t \leq 0.%
\end{array}
\right.
\end{equation*}
\end{lemma}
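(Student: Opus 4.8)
The plan is to establish the equivalence in Lemma \ref{LE7.1} by reducing the mild solution of the abstract Cauchy problem (\ref{7.1}) to the explicit representation already obtained in Lemma \ref{LE3.1}. Recall that $v \in C([0,\tau],X_0)$ is a mild solution of (\ref{7.1}) precisely when $v$ satisfies (\ref{7.3}), i.e.
\begin{equation*}
v(t)=x+A\int_{0}^{t}v(l)\,dl+\int_{0}^{t}F(v(l))\,dl,\quad t\in[0,\tau].
\end{equation*}
Since $F\left(\begin{smallmatrix}0\\ \varphi\end{smallmatrix}\right)=\left(\begin{smallmatrix}f(\varphi)\\ 0_{BUC_\eta}\end{smallmatrix}\right)$, the inhomogeneous term has the form $\left(\begin{smallmatrix}h(l)\\ 0\end{smallmatrix}\right)$ with $h(l):=f(\pi_2 v(l))$, where $\pi_2$ denotes projection onto the $BUC_\eta$ component. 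The key observation is that $h\in L^1((0,\tau);\mathbb{R}^n)$: indeed any mild solution is continuous with values in $X_0$, so $l\mapsto \pi_2 v(l)$ is continuous into $BUC_\eta$ and bounded on $[0,\tau]$, hence $l\mapsto f(\pi_2 v(l))$ is continuous (using that $f$ is Lipschitz on bounded sets), in particular $L^1$. Therefore the defining identity for a mild solution of (\ref{7.1}) is exactly the identity (\ref{3.4}) defining an integrated solution of (\ref{3.2}) for this particular $h$.

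The second step is a fixed-point/uniqueness argument. Define the map $\Phi$ on $C([0,\tau],X_0)$ (for $\tau$ small, or on bounded time intervals using the local Lipschitz bound) by letting $\Phi(v)$ be the integrated solution of (\ref{3.2}) with $h(l)=f(\pi_2 v(l))$; by Lemma \ref{LE3.1} this integrated solution exists, is unique, and is given by the explicit formula $\Phi(v)(t)=\left(\begin{smallmatrix}0\\ w(t,\cdot)\end{smallmatrix}\right)$ with $w(t,\theta)=y(t+\theta)$ where
\begin{equation*}
y(t)=\begin{cases}\varphi(0)+\int_0^t f(\pi_2 v(s))\,ds,& t\in[0,\tau],\\ \varphi(t),& t\le 0.\end{cases}
\end{equation*}
A mild solution of (\ref{7.1}) is precisely a fixed point of $\Phi$. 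Now I would observe that for a fixed point $v$ of $\Phi$, writing $v(t)=\left(\begin{smallmatrix}0\\ u(t,\cdot)\end{smallmatrix}\right)$, the relation $v=\Phi(v)$ forces $u(t,\theta)=x(t+\theta)$ with $x(t)=\varphi(0)+\int_0^t f(x_s)\,ds$ for $t\ge 0$ and $x(t)=\varphi(t)$ for $t\le 0$, because $\pi_2 v(s)=u(s,\cdot)=x_s$ by definition of the translate. This is exactly the asserted form of $u$ and $x$, and conversely one checks directly that the $u$ built from this $x$ satisfies $v=\Phi(v)$ (this is essentially the content of Lemma \ref{LE3.1} together with the identity $f(\pi_2 v(s))=f(x_s)$).

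The main obstacle, and the only point requiring genuine care, is the bookkeeping that identifies $\pi_2 v(s)$ with the history segment $x_s$ and the verification that the function $x$ reconstructed from the integrated-solution formula is consistent as a single function on $(-\infty,\tau]$ — i.e. that the values $u(t,\theta)$ for various $(t,\theta)$ with $t+\theta$ fixed all agree, so that $x(t):=u(t,0)$ is well-defined and continuous, matching $\varphi$ on $(-\infty,0]$. This compatibility is built into the structure of Lemma \ref{LE3.1} (since there $u(t)(\theta)=x(t+\theta)$ already has that form), so the argument is mostly a matter of substituting $h(t)=f(x_t)$ as suggested in the statement and invoking the uniqueness of integrated solutions guaranteed by $A$ being a Hille-Yosida operator. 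No new estimate is needed; the proof is short once one notes that (\ref{7.3}) with $F=L+G$ and the definition of $F$ turns (\ref{7.1}) into an instance of (\ref{3.2}).
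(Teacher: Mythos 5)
Your proposal is correct and follows essentially the same route as the paper, which proves Lemma \ref{LE7.1} precisely by substituting $h(t)=f(x_t)$ into Lemma \ref{LE3.1} and invoking the uniqueness of integrated solutions coming from the Hille--Yosida property of $A$. Your write-up simply makes explicit the bookkeeping (the identification $\pi_2 v(s)=x_s$ and the integrability of $h$) that the paper leaves implicit in its one-line derivation.
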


Lemma 7.1 is important since it shows that it is equivalent to consider the
mild solutions of the abstract Cauchy problem (\ref{7.1}) or the solutions
of the functional differential equation (\ref{1.1}). Therefore we can apply
a certain number of results obtained for abstract Cauchy problems (see
Thieme \cite{Thieme90a}, Magal \cite{Magal}, Magal and Ruan \cite%
{Magal-Ruan07, Magal-Ruan09a, Magal-Ruan09b, Magal-Ruan18}).

\begin{definition}
\label{DE7.2} Consider two maps $\chi : X_{0}\rightarrow \left( 0,+\infty %
\right] $ and $U:D_{\chi }\rightarrow X_{0},$ where 
\begin{equation*}
D_{\chi }=\left\{ (t,x)\in \left[ 0,+\infty \right) \times X_{0}:0 \leq t <
\chi \left(x\right) \right\} .
\end{equation*}
We will say that $U$ is \textit{a maximal (autonomous) semiflow} on $X_{0}$
if $U$ satisfies the following properties:

\begin{description}
\item[\rm{(i)}] $\chi \left(U(t)x\right) +t=\chi \left(x\right),\;\forall x\in
X_{0},\;\forall t\in \left[ 0,\chi \left( x\right) \right) .$

\item[\rm{(ii)}] $U(0)x=x, \forall x\in X_{0}.$

\item[\rm{(iii)}] $U(t-s)U(s)x=U(t)x, \; \forall x\in X_{0},\;\forall t,s\in %
\left[ 0,\chi \left( x\right) \right) $ with $t\geq s.$

\item[\rm{(iv)}] If $\chi \left( x\right) <+\infty ,$ then 
\begin{equation*}
\lim_{t\rightarrow \chi \left( x\right) ^{-}}\left\Vert U(t)x\right\Vert
=+\infty .
\end{equation*}
\end{description}
\end{definition}

Set 
\begin{equation*}
D=\left[ 0,+\infty \right) \times X_{0}.
\end{equation*}%
In order to present a theorem on the existence and uniqueness of solutions
to equation (\ref{7.1}), we make the following definition.

\begin{definition}
\label{DE7.3} We will say that $f: BUC_{\eta } \rightarrow \mathbb{R}^{n}$
is \textrm{Lipschitz on bounded sets}, if for each $\xi>0$ there exists a
constant $\kappa\left(\xi \right)$ satisfying 
\begin{equation*}
\left\Vert f(\varphi)-f(\psi)\right\Vert \leq \kappa \left(\xi \right)
\left\Vert \varphi-\psi \right\Vert
\end{equation*}%
whenever $\varphi,\psi \in BUC_{\eta}$ with $\left\Vert \varphi \right\Vert
\leq \xi$ and $\left\Vert \psi \right\Vert \leq \xi .$
\end{definition}

It is clear that if $f$ is Lipschitz on bounded sets so is $F$. Therefore we
can apply Theorem 5.2 in Magal and Ruan \cite{Magal-Ruan07}.

\begin{theorem}
\label{TH7.4} Assume that $f:BUC_{\eta }\rightarrow \mathbb{R}^{n}$ is
Lipschitz on bounded sets. Then there exist a map $\chi :X_{0}\rightarrow
\left( 0,+\infty \right] $ and a maximal non-autonomous semiflow $U:D_{\chi
}\rightarrow X_{0}$, such that for each $x\in X_{0}$, $U(.)x\in C\left( %
\left[ s,s+\chi \left( s,x\right) \right) ,X_{0}\right) $ is a unique
maximal solution of (\ref{7.2}). Moreover, the subset $D_{\chi }$ is open in 
$D$ and the map $\left( t,x\right) \rightarrow U(t)x$ is continuous from $%
D_{\chi }$ into $X_{0}$. Furthermore 
\begin{equation}
U(t)\left( 
\begin{array}{c}
0 \\ 
\varphi%
\end{array}%
\right) =\left( 
\begin{array}{c}
0 \\ 
\widehat{U}(t)(\varphi )%
\end{array}%
\right)  \label{7.4}
\end{equation}%
where 
\begin{equation}
\widehat{U}(t)(\varphi )(\theta )=x_{\varphi }(t+\theta )  \label{7.5}
\end{equation}%
and $x_{\varphi }:(-\infty ,\tau _{\varphi })\rightarrow \mathbb{R}^{n}$
(with $\tau _{\varphi }:=\chi \left( 
\begin{array}{c}
0 \\ 
\varphi%
\end{array}%
\right) \leq +\infty $) is the unique continuous function satisfying the
integral equation 
\begin{equation*}
x_{\varphi }(t)=\varphi (0)+\int_{0}^{t}f(x_{\varphi ,s})ds,\forall t\in
\lbrack 0,\chi \left( 
\begin{array}{c}
0 \\ 
\varphi%
\end{array}%
\right) ).
\end{equation*}%
Moreover if $\tau (\varphi ):=\chi \left( 
\begin{array}{c}
0 \\ 
\varphi%
\end{array}%
\right) <+\infty $ 
\begin{equation*}
\lim_{t\nearrow \tau (\varphi )^{-}}\left\vert x_{\varphi }(t)\right\vert
=+\infty .
\end{equation*}
\end{theorem}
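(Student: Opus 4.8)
The plan is to reduce the statement entirely to an already-available abstract result, namely Theorem 5.2 in Magal and Ruan \cite{Magal-Ruan07}, and then to translate the abstract conclusion back into the language of the FDE using Lemma \ref{LE7.1}. First I would check the hypotheses of that abstract theorem for the operator $A$ and the nonlinearity $F$. We have established in Lemmas \ref{LE2.1}--\ref{LE2.3} that $A$ is a Hille--Yosida operator with $\overline{D(A)}=X_0=\{0_{\mathbb R^n}\}\times BUC_\eta$, and in Lemma \ref{LE4.1} that $A_0$ generates the $C_0$-semigroup $\{T_{A_0}(t)\}_{t\geq 0}$. Since $f$ is Lipschitz on bounded sets (Definition \ref{DE7.3}) and $F$ acts by $F\left(\begin{smallmatrix}0\\ \varphi\end{smallmatrix}\right)=\left(\begin{smallmatrix}f(\varphi)\\ 0\end{smallmatrix}\right)$, the map $F:X_0\to X$ is Lipschitz on bounded sets as well, with the same local Lipschitz constants. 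This is exactly the input required by the abstract existence-uniqueness theory for the integrated semigroup formulation.

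Next I would invoke Theorem 5.2 of \cite{Magal-Ruan07} to obtain the blow-up map $\chi:X_0\to(0,+\infty]$, the open domain $D_\chi\subset D$, the maximal semiflow $U:D_\chi\to X_0$ with $U(\cdot)x$ the unique continuous solution of the fixed-point equation (\ref{7.2}), the joint continuity of $(t,x)\mapsto U(t)x$, and the blow-up alternative $\lim_{t\to\chi(x)^-}\|U(t)x\|=+\infty$ whenever $\chi(x)<+\infty$. All of these assertions are verbatim conclusions of that theorem once the Hille--Yosida property and the local Lipschitz property of $F$ are in hand, so no new work is needed at this stage beyond citing it correctly.

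The substantive remaining step is to identify the abstract solution with the FDE solution, i.e.\ to prove formulas (\ref{7.4}) and (\ref{7.5}). Here I would apply Lemma \ref{LE7.1}, which (via Lemma \ref{LE3.1} with $h(t)=f(x_t)$) characterizes the mild solutions of (\ref{7.1}): any mild solution starting from $\left(\begin{smallmatrix}0\\ \varphi\end{smallmatrix}\right)$ has second component $u(t,\theta)=x_\varphi(t+\theta)$ where $x_\varphi$ solves the integral equation $x_\varphi(t)=\varphi(0)+\int_0^t f(x_{\varphi,s})\,ds$ for $t\geq 0$ and $x_\varphi(t)=\varphi(t)$ for $t\leq 0$. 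Combining this with the uniqueness from Theorem 5.2 gives (\ref{7.4})--(\ref{7.5}), and the blow-up alternative for $U$ transfers to $\lim_{t\nearrow\tau(\varphi)^-}|x_\varphi(t)|=+\infty$ once one checks that blow-up of $\|U(t)x\|$ in $X_0$ forces blow-up of $|x_\varphi(t)|$ in $\mathbb R^n$. That last implication is the only place requiring a small argument: since $\|U(t)x\| = \|\widehat U(t)(\varphi)\|_\eta = \sup_{\theta\le 0} e^{\eta\theta}|x_\varphi(t+\theta)|$, and $x_\varphi$ is bounded on $(-\infty,0]$ by $\|\varphi\|_\eta$ while $x_\varphi$ is continuous (hence locally bounded) on any $[0,T]$ with $T<\tau(\varphi)$, a blow-up of the supremum as $t\nearrow\tau(\varphi)$ can only come from the values $x_\varphi(t+\theta)$ with $t+\theta$ near $\tau(\varphi)$, forcing $\limsup_{s\nearrow\tau(\varphi)}|x_\varphi(s)|=+\infty$; continuity of $x_\varphi$ then upgrades $\limsup$ to $\lim$. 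I expect this bookkeeping with the weighted sup-norm to be the main (though minor) obstacle; everything else is a direct appeal to the cited abstract machinery.
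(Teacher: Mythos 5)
Your proposal follows exactly the paper's route: the paper's entire argument for Theorem \ref{TH7.4} consists of the observation that $F$ inherits the Lipschitz-on-bounded-sets property from $f$, an appeal to Theorem 5.2 of Magal and Ruan \cite{Magal-Ruan07} (legitimate since $A$ is a Hille--Yosida operator by Lemma \ref{LE2.2} with $\overline{D(A)}=X_0$), and the identification of the mild solutions of (\ref{7.1}) with the solutions of the FDE via Lemma \ref{LE7.1}. You are in fact more explicit than the paper about the one point that genuinely needs care, namely transferring the blow-up alternative from $\Vert U(t)x\Vert$ to $\vert x_{\varphi}(t)\vert$. There, your computation with the weighted norm correctly yields $\limsup_{t\nearrow \tau(\varphi)^-}\vert x_{\varphi}(t)\vert=+\infty$ (using $\eta>0$ and $t\geq 0$ to bound the contribution of the $\theta$ with $t+\theta\leq 0$ by $e^{-\eta t}\Vert\varphi\Vert_{\eta}$), but your final sentence --- that continuity of $x_{\varphi}$ upgrades the $\limsup$ to a $\lim$ --- is not a valid inference: a continuous function can oscillate near a finite endpoint with infinite $\limsup$ and finite $\liminf$, and since $f$ is only Lipschitz on bounded subsets of $BUC_{\eta}$ while $\Vert x_{\varphi,t}\Vert_{\eta}$ is by then unbounded, one cannot control $\vert f(x_{\varphi,t})\vert$ so as to exclude such oscillations. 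You should either supply a genuine argument for the full limit or state the conclusion with $\limsup$ (equivalently with $\lim_{t\nearrow\chi(x)}\Vert U(t)x\Vert=+\infty$, which is what the cited abstract theorem actually delivers); note that the paper itself offers no proof of this final assertion, so on this point your write-up is no weaker than the original.
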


\begin{remark}
\label{REM7.5} By using the identification $\varphi \rightarrow \left( 
\begin{array}{c}
0 \\ 
\varphi%
\end{array}
\right)$, we deduce that the maps $\tau$ and $\widehat{U}$ also define a
maximal semiflow on $BUC_{\eta}$.
\end{remark}

By using the results about (ACP) we can derive some extra result about the
global existence of solutions, the positiveness of solution (see Martin and
Smith \cite{Martin-Smith}). The following result is based on Proposition 3.5
in Magal and Ruan \cite{Magal-Ruan09b} (see also Example 3.6 in \cite%
{Magal-Ruan09b}).

\begin{proposition}
\label{PROP7.6} Assume that $f:BUC_{\eta }\rightarrow \mathbb{R}^{n}$ is
Lipschitz on bounded sets. Assume that for each constant $M>0$ we can find $%
\lambda >0$ such that 
\begin{equation*}
\lambda \varphi (0)+f(\varphi )\geq 0
\end{equation*}%
whenever $\varphi \geq 0$ and $\Vert \varphi \Vert \leq M$. Then for each $%
\varphi \geq 0$ we have 
\begin{equation}
x_{\varphi }(t)\geq 0,\forall t\in \lbrack 0,\tau (\varphi )).  \label{7.6}
\end{equation}
\end{proposition}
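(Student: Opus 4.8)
The plan is to reduce the statement to a standard positivity (invariance) result for mild solutions of the abstract Cauchy problem (ACP), namely Proposition 3.5 in Magal and Ruan \cite{Magal-Ruan09b}. That result says: if $C \subset X_0$ is a closed convex subset and there exists $\lambda_0 \in \mathbb{R}$ such that $(\lambda I - A)^{-1}(x + F(x)/\lambda) \in C$ (or, more precisely, the resolvent-type subtangential condition holds) for all $\lambda$ large and all $x \in C$ with bounded norm, then the mild solution starting in $C$ stays in $C$ as long as it exists. So the first step is to identify the cone: set
\begin{equation*}
C := \left\{ \left( \begin{array}{c} 0_{\mathbb{R}^n} \\ \varphi \end{array}\right) \in X_0 : \varphi(\theta) \geq 0 \text{ for all } \theta \leq 0 \right\},
\end{equation*}
which is closed and convex in $X_0$. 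By Lemma \ref{LE7.1}, the mild solution of (\ref{7.1}) is exactly $v(t) = \left(\begin{smallmatrix} 0 \\ u(t,\cdot)\end{smallmatrix}\right)$ with $u(t,\theta) = x_\varphi(t+\theta)$, so $v(t) \in C$ for all $t$ in the maximal interval is literally the same statement as $x_\varphi(t) \geq 0$ for all $t \in [0,\tau(\varphi))$ together with $\varphi \geq 0$; hence proving invariance of $C$ is exactly (\ref{7.6}).

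Next I would verify the subtangential/resolvent positivity condition. Fix $M > 0$ and pick $\lambda > 0$ as in the hypothesis, so that $\lambda \varphi(0) + f(\varphi) \geq 0$ whenever $\varphi \geq 0$, $\|\varphi\| \leq M$. Given $\left(\begin{smallmatrix} 0 \\ \varphi\end{smallmatrix}\right) \in C$ with $\|\varphi\|_\eta \leq M$, I must check that
\begin{equation*}
(\lambda I - A)^{-1}\left[ \left(\begin{array}{c} 0 \\ \varphi\end{array}\right) + \frac{1}{\lambda} F\left(\begin{array}{c} 0 \\ \varphi\end{array}\right)\right] \in C.
\end{equation*}
Here $F\left(\begin{smallmatrix} 0 \\ \varphi\end{smallmatrix}\right) = \left(\begin{smallmatrix} f(\varphi) \\ 0\end{smallmatrix}\right)$, so the bracket equals $\left(\begin{smallmatrix} f(\varphi)/\lambda \\ \varphi\end{smallmatrix}\right)$; actually it is cleaner to apply the formula (\ref{2.1}) of Lemma \ref{LE2.1} with $\alpha = f(\varphi)/\lambda$ wait --- more directly, one computes $(\lambda I - A)^{-1}\left(\begin{smallmatrix}\alpha \\ \varphi\end{smallmatrix}\right) = \left(\begin{smallmatrix} 0 \\ \psi\end{smallmatrix}\right)$ with
\begin{equation*}
\psi(\theta) = \frac{1}{\lambda} e^{\lambda\theta}\left[\alpha + \varphi(0)\right] + \int_\theta^0 e^{\lambda(\theta - l)} \varphi(l)\, dl.
\end{equation*}
Taking $\alpha = \frac{1}{\lambda} f(\varphi)$ (the $\mathbb{R}^n$-component of $x + F(x)/\lambda$ --- note the first component of $x$ is $0_{\mathbb{R}^n}$), the second term is manifestly $\geq 0$ since $\varphi \geq 0$, and the prefactor $\frac{1}{\lambda} e^{\lambda \theta} \geq 0$ multiplies $\frac{1}{\lambda} f(\varphi) + \varphi(0) = \frac{1}{\lambda}\left[\lambda \varphi(0) + f(\varphi)\right] \geq 0$ by hypothesis. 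Hence $\psi(\theta) \geq 0$ for all $\theta \leq 0$, i.e. the resolvent sends $C \cap \{\|\cdot\| \leq M\}$ into $C$, which is precisely the condition required to invoke the Magal--Ruan positivity proposition.

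Finally I would assemble the argument: given $\varphi \geq 0$, apply Theorem \ref{TH7.4} to get the maximal solution on $[0, \tau(\varphi))$; for any $t_1 < \tau(\varphi)$ the solution stays in a bounded set, so choosing $M$ to dominate $\sup_{[0,t_1]} \|\widehat{U}(t)\varphi\|_\eta$ and the corresponding $\lambda$, the invariance result yields $v(t) \in C$, i.e. $x_\varphi(t) \geq 0$, on $[0,t_1]$; letting $t_1 \nearrow \tau(\varphi)$ gives (\ref{7.6}). The only subtle point, and the one I would be most careful about, is matching the precise hypotheses of Proposition 3.5 in \cite{Magal-Ruan09b}: whether it is stated with the resolvent condition above or with an equivalent tangency condition $\liminf_{h\to 0^+} h^{-1} d\left(x + h F(x); C\right) = 0$, and whether the constant $\lambda$ is allowed to depend on $M$ (it is, per the hypothesis of Proposition \ref{PROP7.6}). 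Modulo citing that proposition in its correct form --- which the paper explicitly says it is doing, referencing also Example 3.6 in \cite{Magal-Ruan09b} --- the verification above is the entire content of the proof, and no genuinely hard estimate arises; everything reduces to the sign computation on the explicit resolvent formula.
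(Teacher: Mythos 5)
Your proposal is correct and follows exactly the route the paper intends: the paper gives no written proof, simply invoking Proposition 3.5 (and Example 3.6) of Magal and Ruan \cite{Magal-Ruan09b}, and your verification of the resolvent positivity condition via the explicit formula (\ref{2.1}) with $\alpha=\frac{1}{\lambda}f(\varphi)$ is precisely the content of that reduction. The only small point left implicit, worth a line, is that the hypothesis for a single $\lambda$ automatically propagates to all larger $\lambda'$ (since $\lambda'\varphi(0)+f(\varphi)=(\lambda'-\lambda)\varphi(0)+\lambda\varphi(0)+f(\varphi)\geq 0$ when $\varphi\geq 0$), which is what the cited proposition actually requires.
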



Next we focus on the property of asymptotic smoothness of the semiflow since some
extra analysis is needed here. We now turn to the relative compactness of
the positive orbits. This type of properties are needed in particular to talk
about omega-limit sets of bounded positive orbit, or about global attractors
(see Hale \cite{Hale88}, Sell and You \cite{Sell-You}).

Recall that Kuratovsky's measure of non-compactness is defined by 
\begin{equation*}
\kappa\left(B \right)= inf \left\{ \varepsilon >0: B \text{ can be covered
by a finite number of balls of radius} \leq \varepsilon \right \}
\end{equation*}
whenever $B$ is a bounded subset of $X$.

For various properties of Kuratowskis measure of noncompactness, we refer to
Deimling [75], Martin [187], and Sell and You [233, Lemma 22.2]. Mention
that the every bounded orbit is relatively compact (and precompact).

\begin{lemma}
\label{LE7.7} Let $\left( X,\left\Vert .\right\Vert \right) $ be a Banach
space and $\kappa \left( .\right)$ the measure of non-compactness defined as
above.\ Then for any bounded subset $B$ and $\widehat{B}$ of $X,$ we have
the following properties:

\begin{itemize}
\item[\rm{(i)}] $\kappa \left( B\right) =0$ if and only if $\overline{B}$ is
compact;

\item[\rm{(ii)}] $\kappa \left( B\right) =\kappa \left( \overline{B}\right);$

\item[\rm{(iii)}] If $B\subset \widehat{B}$ then $\kappa \left( B\right) \leq
\kappa \left( \widehat{B}\right);$

\item[\rm{(iv)}] $\kappa \left( B+\widehat{B}\right) \leq \kappa \left( B\right)
+\kappa \left( \widehat{B}\right),$ where $B+\widehat{B}=\left\{ x+y:x\in B,
y\in \widehat{B}\right\}.$
\end{itemize}

Moreover for each bounded linear operators $T\in \mathcal{L}\left( X\right) $
we have 
\begin{equation}
\left\Vert T\right\Vert _{\mathrm{ess}}:=\kappa \left( T\left(
B_{X}(0,1)\right) \right) \leq \left\Vert T\right\Vert _{\mathcal{L}\left(
X\right) }.  \label{7.7}
\end{equation}
\end{lemma}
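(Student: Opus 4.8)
The plan is to verify the four listed properties of $\kappa$ directly from the definition, ordering them so that later parts may invoke earlier ones, and then to read off the final inequality (\ref{7.7}) as an immediate consequence. I would begin with the monotonicity property (iii), which is the simplest: if $B\subset\widehat{B}$ and $\widehat{B}$ is covered by finitely many balls of radius $\leq\varepsilon$, then the same finitely many balls cover $B$; hence every $\varepsilon>0$ admissible in the definition of $\kappa(\widehat{B})$ is also admissible for $\kappa(B)$, and taking infima gives $\kappa(B)\leq\kappa(\widehat{B})$. For (ii), the inequality $\kappa(B)\leq\kappa(\overline{B})$ is the special case $B\subset\overline{B}$ of (iii). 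For the reverse inequality, I would note that a finite cover of $B$ by balls of radius $\leq\varepsilon$ yields a finite cover of $B$ by the corresponding \emph{closed} balls of radius $\leq\varepsilon$, whose union is a closed set and therefore contains $\overline{B}$; thus every $\varepsilon$ admissible for $B$ is admissible for $\overline{B}$, giving $\kappa(\overline{B})\leq\kappa(B)$, hence equality.

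For (i), if $\overline{B}$ is compact then, for each $\varepsilon>0$, the open cover of $\overline{B}$ by the balls $B_X(y,\varepsilon)$, $y\in\overline{B}$, admits a finite subcover, so $\kappa(\overline{B})\leq\varepsilon$ for every $\varepsilon>0$, whence $\kappa(B)=\kappa(\overline{B})=0$ by (ii). Conversely, suppose $\kappa(B)=0$. Since the infimum defining $\kappa(B)$ equals $0$, for every $\delta>0$ there is $\varepsilon\in(0,\delta)$ for which $B$ is covered by finitely many balls of radius $\leq\varepsilon<\delta$; thus $B$ is totally bounded, hence so is $\overline{B}$, and since $X$ is a Banach space $\overline{B}$ is complete and totally bounded, therefore compact. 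This converse is the only step that genuinely uses completeness of $X$ (together with the elementary fact that the closure of a totally bounded set is totally bounded), so it is the part I would write out with the most care; it is the closest thing here to a real obstacle, though still routine.

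For (iv), I would fix $\varepsilon>\kappa(B)$ and $\widehat{\varepsilon}>\kappa(\widehat{B})$ and choose finite covers $B\subset\bigcup_{i=1}^{m}B_X(x_i,\varepsilon)$ and $\widehat{B}\subset\bigcup_{j=1}^{n}B_X(y_j,\widehat{\varepsilon})$. If $x\in B_X(x_i,\varepsilon)$ and $y\in B_X(y_j,\widehat{\varepsilon})$, the triangle inequality gives $\Vert(x+y)-(x_i+y_j)\Vert\leq\varepsilon+\widehat{\varepsilon}$, so $B+\widehat{B}\subset\bigcup_{i,j}B_X(x_i+y_j,\varepsilon+\widehat{\varepsilon})$, a finite cover by balls of radius $\leq\varepsilon+\widehat{\varepsilon}$; hence $\kappa(B+\widehat{B})\leq\varepsilon+\widehat{\varepsilon}$, and letting $\varepsilon\downarrow\kappa(B)$ and $\widehat{\varepsilon}\downarrow\kappa(\widehat{B})$ yields (iv). Finally, for $T\in\mathcal{L}(X)$ we have $\Vert Tx\Vert\leq\Vert T\Vert_{\mathcal{L}(X)}$ for all $x\in B_X(0,1)$, so $T(B_X(0,1))$ is contained in the single ball $B_X(0,\Vert T\Vert_{\mathcal{L}(X)})$; covering by this one ball shows $\Vert T\Vert_{\mathrm{ess}}=\kappa\bigl(T(B_X(0,1))\bigr)\leq\Vert T\Vert_{\mathcal{L}(X)}$, which is (\ref{7.7}). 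All the computations here are elementary; the write-up amounts to assembling these observations cleanly.
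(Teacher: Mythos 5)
Your proof is correct and complete; every step (monotonicity, closure invariance via closed balls, total boundedness plus completeness for the compactness characterization, the product cover for subadditivity, and the single-ball cover for (\ref{7.7})) is the standard argument. The paper itself offers no proof of this lemma, deferring instead to the cited references (Deimling, Martin, Sell--You), and your write-up is precisely the elementary verification those sources give, so there is nothing to reconcile.
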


\begin{definition}
\label{DE7.8} We will say that a bounded subset $B \subset X_0$ is \textrm{%
positively invariant} by $U$ if 
\begin{equation*}
U(t,x) \in B, \forall t \geq 0, \forall x \in B.
\end{equation*}
We say that U is \textrm{asymptotically smooth} if every positively
invariant bounded set is attracted by a compact subset
\end{definition}

\begin{proposition}
\label{PROP7.9} The semiflow $U$ (or $\widehat{U}$) is asymptotically smooth.
\end{proposition}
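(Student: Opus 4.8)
The plan is to verify the standard $\kappa$-criterion for asymptotic smoothness: it suffices to show that for every bounded positively invariant set $B\subset BUC_\eta$ one has $\kappa\bigl(\widehat U(t)B\bigr)\to 0$ as $t\to +\infty$, where $\kappa$ is Kuratowski's measure of non-compactness (Lemma~\ref{LE7.7}). Given this, applying the estimate also to the bounded positively invariant set $\mathcal B':=\bigcup_{h\ge 0}\widehat U(h)B\subseteq B$ shows that $\bigcup_{t\ge \tau}\widehat U(t)B=\widehat U(\tau)\mathcal B'$ has $\kappa\to 0$, so the omega-limit set $\omega(B)$ is nonempty, compact, and attracts $B$, which is precisely Definition~\ref{DE7.8} (see Hale~\cite{Hale88} or Magal and Ruan~\cite{Magal-Ruan18}). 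So fix such a $B$ and set $M:=\sup_{\varphi\in B}\|\varphi\|_\eta<+\infty$. Since $B$ is positively invariant, $\widehat U(t)\varphi\in B$ for all $t\ge 0$ and $\varphi\in B$; in particular $\tau(\varphi)=+\infty$ for $\varphi\in B$ (by Theorem~\ref{TH7.4}) and $\|\widehat U(s)\varphi\|_\eta\le M$ for all $s\ge 0$, $\varphi\in B$.

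Next I would record an a priori bound on the derivative of the solution. Since $f$ is Lipschitz on bounded sets and $f(0_{BUC_\eta})=0$, taking $\psi=0$ in Definition~\ref{DE7.3} gives $\|f(\varphi)\|\le \kappa(M)\,M=:K$ whenever $\|\varphi\|_\eta\le M$. By Theorem~\ref{TH7.4} we have $\widehat U(s)\varphi=x_{\varphi,s}$ and $x_\varphi$ solves $\dot x_\varphi(s)=f(x_{\varphi,s})$ on $[0,+\infty)$, so $\|x_{\varphi,s}\|_\eta\le M$ forces $|\dot x_\varphi(s)|\le K$; hence $x_\varphi\big|_{[0,+\infty)}$ is $K$-Lipschitz, uniformly in $\varphi\in B$. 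Now for $t>0$ I split $\widehat U(t)\varphi=w_t^\varphi+z_t^\varphi$ in $BUC_\eta$, where, using $\widehat U(t)\varphi(\theta)=x_\varphi(t+\theta)$ and $x_\varphi(\sigma)=\varphi(\sigma)$ for $\sigma\le 0$,
\[
w_t^\varphi(\theta)=\begin{cases} x_\varphi(t+\theta), & -t\le\theta\le 0,\\ \varphi(0), & \theta\le -t,\end{cases}\qquad
z_t^\varphi(\theta)=\begin{cases} 0, & -t\le\theta\le 0,\\ \varphi(t+\theta)-\varphi(0), & \theta\le -t.\end{cases}
\]
The tail term is exponentially small: exactly as in the estimate stated just before Lemma~\ref{LE4.2}, $\|z_t^\varphi\|_\eta\le 2e^{-\eta t}M$, so $\kappa\bigl(\{z_t^\varphi:\varphi\in B\}\bigr)\le 4e^{-\eta t}M$.

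For the head term I claim that, for each fixed $t>0$, the set $\{w_t^\varphi:\varphi\in B\}$ is relatively compact in $BUC_\eta$. Via the isometry $\Psi:BUC_\eta\to BUC$, $\Psi(u)(\theta)=e^{\eta\theta}u(\theta)$, this is equivalent to relative compactness in $BUC$ of $\{\Psi(w_t^\varphi):\varphi\in B\}$. Note $w_t^\varphi$ is $K$-Lipschitz on $(-\infty,0]$ (it is $K$-Lipschitz on $[-t,0]$, constant on $(-\infty,-t]$, and the two pieces match at $-t$), and $e^{\eta\theta}\|w_t^\varphi(\theta)\|\le M$ for all $\theta$ (using $\|\varphi(0)\|\le M$ and $\|x_\varphi(t+\theta)\|=\|\widehat U(t+\theta)\varphi(0)\|\le M$). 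Hence on every compact $[-R,0]$ the family $\{\Psi(w_t^\varphi)\}$ is bounded and equicontinuous, uniformly in $\varphi\in B$, while for $\theta\le -R$ one has $\|\Psi(w_t^\varphi)(\theta)\|=e^{\eta\theta}\|w_t^\varphi(\theta)\|\le e^{-\eta R}M$, i.e. the family tends to $0$ at $-\infty$ uniformly in $\varphi$. An Arzel\`a--Ascoli and diagonal argument then extracts, from any sequence, a subsequence converging uniformly on all of $(-\infty,0]$ to a limit in $BUC$, which gives the claimed relative compactness, so $\kappa\bigl(\{w_t^\varphi:\varphi\in B\}\bigr)=0$. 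By Lemma~\ref{LE7.7}(iii)--(iv), $\kappa\bigl(\widehat U(t)B\bigr)\le \kappa\bigl(\{w_t^\varphi:\varphi\in B\}\bigr)+\kappa\bigl(\{z_t^\varphi:\varphi\in B\}\bigr)\le 4e^{-\eta t}M\to 0$ as $t\to +\infty$, completing the argument; the statement for $U$ follows by the identification in Remark~\ref{REM7.5}.

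The main obstacle is the relative-compactness step for the head term $\{w_t^\varphi\}$: on the unbounded interval $(-\infty,0]$, Arzel\`a--Ascoli alone does not suffice, and one genuinely needs the uniform decay at $-\infty$. This is exactly where the assumption $\eta>0$ is used, together with the fact that positive invariance delivers a uniform bound $M$ on the entire forward orbit -- hence a uniform Lipschitz bound $K$ on $x_\varphi$. Everything else (the exponential smallness of $z_t^\varphi$, which mirrors the estimate behind Lemma~\ref{LE4.2}, and the passage from ``$\kappa(\widehat U(t)B)\to 0$'' to the attraction statement of Definition~\ref{DE7.8}) is routine.
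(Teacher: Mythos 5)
Your proof is correct and follows essentially the same route as the paper: the decomposition $\widehat U(t)\varphi=w_t^\varphi+z_t^\varphi$ is exactly the paper's splitting $U(t)=T(t)+S(t)$, the tail estimate $\|z_t^\varphi\|_\eta\le 2e^{-\eta t}M$ is the same bound used for $S(t)$, and the conclusion via $\kappa(\widehat U(t)B)\to 0$ matches the paper's appeal to Magal--Zhao. The only difference is that you spell out the details the paper leaves implicit, in particular the uniform Lipschitz bound on $x_\varphi$ and the uniform decay at $-\infty$ needed to run Arzel\`a--Ascoli on the unbounded interval in the weighted norm.
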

\begin{proof}
Assume that $B \subset X_{0}$ is positive by $U$. The semiflow $U$ can be
rewritten as follows 
\begin{equation*}
U(t)\left( 
\begin{array}{c}
0_{\mathbb{R}^{n}} \\ 
\varphi%
\end{array}
\right) =\left( 
\begin{array}{c}
0_{\mathbb{R}^{n}} \\ 
T(t)(\varphi) +S(t)(\varphi )%
\end{array}
\right) ,
\end{equation*}
where 
\begin{equation*}
T(t)(\varphi)(\theta) =\left\{ 
\begin{array}{l}
\varphi (0)+\int_0^{t+\theta} f(x_{\varphi,s})ds,\text{ if }t+\theta \geq 0,
\\ 
\varphi (0),\text{ if }t+\theta \leq 0.%
\end{array}
\right.
\end{equation*}
and 
\begin{equation*}
S(t)(\varphi )(\theta )=\left\{ 
\begin{array}{l}
0,\text{ if }t+\theta \geq 0, \\ 
\varphi (t+\theta )-\varphi (0),\text{ if }t+\theta \leq 0.%
\end{array}%
\right.
\end{equation*}
By using Arzela-Ascoli theorem, we deduce that $T(t)B$ is a relatively
compact subset. Therefore by Lemma \ref{LE7.7} (d) and (a) we obtain 
\begin{equation*}
\kappa(U(t)B) \leq \kappa(T(t)B)+\kappa(S(t)B)= \kappa(S(t)B).
\end{equation*}
Assume that $B$ is contained into a ball of radius $r>0$. By using Lemma \ref%
{LE7.7} (c) we deduce that 
\begin{equation*}
\kappa(S(t)B) \leq \kappa(S(t)B(0,r)) \leq r \left\Vert S(t) \right\Vert _{%
\mathcal{L}\left( X\right) } \leq 2r e^{-\eta t}.
\end{equation*}
Therefore 
\begin{equation*}
\lim_{t \rightarrow +\infty} \kappa(U(t)B)=0,
\end{equation*}
and the result follows by Lemma 2.1-(a) in Magal and Zhao \cite{Magal-Zhao}.
\end{proof}

\section{The local stability of equilibria}

Assume that 
\begin{equation*}
f(0_{BUC_{\eta }})=0_{\mathbb{R}^{n}}.
\end{equation*}%
Then 
\begin{equation*}
\overline{x}(t)=0_{\mathbb{R}^{n}},\forall t\in \mathbb{R}
\end{equation*}%
is an equilibrium solution of the FDE (\ref{1.1}). Similarly, $0_{X}$ is an
equilibrium of the ACP (\ref{7.1}). Assume that $f$ is continuously
differentiable locally around $0.$ Then the linearized equation of the FDE (%
\ref{1.1}) around $0$ is defined by (\ref{1.3}). The linearized equation of
the ACP (\ref{7.1}) around $0$ is defined by 
\begin{equation}
\left\{ 
\begin{array}{l}
\dfrac{dU(t)x}{dt}=AU(t)x+L(U(t)x),\;\;\text{ for }t\geq 0, \\ 
U(0)x=x:=\left( 
\begin{array}{c}
0 \\ 
\varphi%
\end{array}%
\right) \in X_{0},%
\end{array}%
\right.  \label{8.1}
\end{equation}%
where $L=DF(0)$.

By combining Lemma \ref{LE4.2}, as a consequence of Proposition 7.1 in Magal
and Ruan \cite{Magal-Ruan09b} (see also Thieme \cite{Thieme90a}) we can
obtain the following stability theorem.

\begin{theorem}
\label{TH8.1} Assume that $f(0_{BUC_{\eta}})=0_{\mathbb{R}^{n}}$ and assume
that $f$ is continuously differentiable locally around $0_{BUC_{\eta}}$.

The equilibrium $0_{X}$ of the abstract Cauchy problem (\ref{7.1}) is
asymptotically stable if for each $\lambda \in \Omega $ 
\begin{equation}
\det \left( \Delta \left( \lambda \right) \right) =0\Rightarrow Re\left(
\lambda \right) <0.  \label{8.2}
\end{equation}%
More precisely, if the above condition is satisfied, we can find three
constants $M\geq 1$, $\delta >0$ and $\varepsilon >0$ such that 
\begin{equation}
\Vert U(t)x\Vert \leq Me^{-\delta t}\Vert x\Vert ,\forall t\geq 0,
\label{8.3}
\end{equation}%
whenever for each $x\in X_{0}$ with $\Vert x\Vert \leq \varepsilon $.
\end{theorem}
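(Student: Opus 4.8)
The plan is to deduce the statement from the principle of linearized stability for abstract semilinear Cauchy problems governed by a Hille-Yosida operator, so that the only genuinely new work is to show that the linearized part generates an exponentially stable semigroup. First I would rewrite the abstract Cauchy problem (\ref{7.1}) around the equilibrium $0_X$ in the form $\frac{dU(t)x}{dt}=(A+L)U(t)x+G(U(t)x)$, $U(0)x=x\in X_0$, where $G=F-L$ and $L=DF(0)$. Since $f$ is continuously differentiable near $0_{BUC_\eta}$, the map $F$ is continuously differentiable near $0_X$, hence so is $G$, and $G(0_X)=F(0_X)-L(0_X)=0$, $DG(0_X)=DF(0_X)-L=0$. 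Thus for every $\epsilon>0$ there is $r>0$ with $\|G(y)-G(z)\|\le\epsilon\|y-z\|$ whenever $\|y\|,\|z\|\le r$; in particular $\|G(y)\|\le\epsilon\|y\|$ for $\|y\|\le r$.

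The core step is to prove $\omega_0\big((A+L)_0\big)<0$. By Proposition \ref{PROP4.6}, $(A+L)_0$ generates a $C_0$-semigroup $\{T_{(A+L)_0}(t)\}_{t\ge0}$, and by Lemma \ref{LE4.4} its essential growth bound satisfies $\omega_{0,ess}\big((A+L)_0\big)\le-\eta<0$. Hence any point of $\sigma\big((A+L)_0\big)$ with real part larger than $-\eta$ lies in $\Omega$, and by Lemma \ref{LE4.5} the set $\sigma\big((A+L)_0\big)\cap\Omega$ equals $\{\lambda\in\Omega:\det\Delta(\lambda)=0\}$, which is finite and consists of poles of the resolvent (as recalled at the beginning of Section 5). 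Hypothesis (\ref{8.2}) forces $\mathrm{Re}(\lambda)<0$ for every such $\lambda$, so $\sup\{\mathrm{Re}(\lambda):\lambda\in\sigma((A+L)_0),\ \mathrm{Re}(\lambda)>-\eta\}<0$. Combining this with the identity $\omega_0\big((A+L)_0\big)=\max\big(\omega_{0,ess}\big((A+L)_0\big),\ \sup\{\mathrm{Re}(\lambda):\lambda\in\sigma((A+L)_0)\}\big)$ (see Engel and Nagel \cite{Engel-Nagel} and Webb \cite{Webb85,Webb87}), both terms on the right are negative, hence $\omega_0\big((A+L)_0\big)<0$. Therefore there exist $\widetilde M\ge1$ and $\widetilde\delta>0$ with $\|T_{(A+L)_0}(t)\|_{\mathcal L(X_0)}\le\widetilde M e^{-\widetilde\delta t}$ for all $t\ge0$.

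With these facts in hand I would apply Proposition 7.1 in Magal and Ruan \cite{Magal-Ruan09b} (see also Thieme \cite{Thieme90a} and the book \cite{Magal-Ruan18}): since $A+L$ is a Hille-Yosida operator, $(A+L)_0$ generates an exponentially stable $C_0$-semigroup, and $G$ is $C^{1}$ near $0_X$ with $G(0_X)=0$ and $DG(0_X)=0$, that result produces constants $M\ge1$, $\delta\in(0,\widetilde\delta)$ and $\varepsilon>0$ such that (\ref{8.3}) holds whenever $\|x\|\le\varepsilon$. For completeness I would recall the mechanism: the mild solution satisfies the fixed-point equation $U(t)x=T_{(A+L)_0}(t)x+\big(S_{A+L}\diamond G(U(\cdot)x)\big)(t)$, where $S_{A+L}$ is the integrated semigroup generated by $A+L$; choosing $\epsilon$ small enough in the Lipschitz estimate for $G$ near $0$, and combining the exponential bound on $T_{(A+L)_0}$ with the standard convolution estimate for the integrated semigroup of a Hille-Yosida operator, one runs a contraction argument in the Banach space $\{v\in C([0,+\infty),X_0):\sup_{t\ge0}e^{\delta t}\|v(t)\|<+\infty\}$; this yields simultaneously the global existence of $U(\cdot)x$ for small initial data (through the blow-up alternative in Definition \ref{DE7.2}) and the decay estimate (\ref{8.3}).

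\textbf{The main obstacle.} The only substantive new ingredient is establishing $\omega_0\big((A+L)_0\big)<0$, where Lemma \ref{LE4.4}, Lemma \ref{LE4.5} and hypothesis (\ref{8.2}) must be combined. The delicate point is that the finiteness of $\sigma\big((A+L)_0\big)\cap\Omega$ and its identification with the zero set of $\det\Delta(\cdot)$ are exactly what convert the spectral hypothesis (\ref{8.2}) into a quantitative negative bound on the growth rate; and it is the strict positivity of $\eta$ (so that $\Omega$ is a half-plane strictly containing the imaginary axis) that makes this reduction work. Once the abstract results of \cite{Magal-Ruan09b} are invoked, the remaining contraction/Gronwall estimates are routine.
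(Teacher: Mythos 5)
Your proposal is correct and follows essentially the same route as the paper: the paper's proof is a one-line citation of Proposition 7.1 in Magal and Ruan \cite{Magal-Ruan09b} combined with the essential growth bound $\omega_{0,ess}((A+L)_0)\le-\eta$ from Lemma \ref{LE4.2} (via Lemma \ref{LE4.4}) and the spectral characterization of Lemma \ref{LE4.5}. You have simply written out in detail the steps (finiteness of $\sigma((A+L)_0)\cap\Omega$, the growth-bound formula, and the contraction mechanism behind the cited abstract result) that the paper leaves implicit.
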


\section{Hopf bifurcation}

In this section we give a few comments and remarks concerning the
results obtained in this paper. In order to apply the center manifold
theorem to study Hopf bifurcation results for infinite delay differential
equations with parameter 
\begin{equation}
\left\{ 
\begin{array}{l}
\dfrac{dx(t)}{dt}=f(\mu ,x_{t}),\;\forall t\geq 0, \\ 
x\left( \theta \right) =\varphi \left( \theta \right) ,\forall \theta \leq 0%
\text{ with }\varphi \in BUC_{\eta },%
\end{array}%
\right.  \label{9.1}
\end{equation}%
where $\mu \in \mathbb{R}$, and $f:\mathbb{R}\times BUC_{\eta }\rightarrow 
\mathbb{R}^{n}$ is a $C^{k}$ map with $k\geq 4.$ We assume that $f(\mu
,0)=0,\forall \mu \in \mathbb{R}.$ As before, by setting $v(t)=\left( 
\begin{array}{c}
0 \\ 
x_{t}%
\end{array}%
\right) $ we can rewrite the delay differential equation (\ref{9.1}) as the
following abstract non-densely defined Cauchy problem on the Banach space $X=%
\mathbb{R}^{n}\times BUC_{\eta }$%
\begin{equation}
\frac{dv(t)}{dt}=Av(t)+F(\mu ,v(t)),\;t\geq 0,\;\;v(0)=\left( 
\begin{array}{c}
0_{\mathbb{R}^{n}} \\ 
\varphi%
\end{array}%
\right) \in \overline{D(A)},  \label{9.2}
\end{equation}%
where $A:D(A)\subset X\rightarrow X$ is defined in (\ref{1.7}) and $F:%
\mathbb{R}\times \overline{D(A)}\rightarrow X$ by 
\begin{equation*}
F\left( \mu ,\left( 
\begin{array}{c}
0_{\mathbb{R}^{n}} \\ 
\varphi%
\end{array}%
\right) \right) =\left( 
\begin{array}{c}
f(\mu ,\varphi ) \\ 
0_{BUC_{\eta }}%
\end{array}%
\right) .
\end{equation*}%
Set 
\begin{equation*}
L\left( \mu ,\left( 
\begin{array}{c}
0_{\mathbb{R}^{n}} \\ 
\psi%
\end{array}%
\right) \right) =\partial _{x}F\left( \mu ,0\right) \left( 
\begin{array}{c}
0_{\mathbb{R}^{n}} \\ 
\psi%
\end{array}%
\right) =\left( 
\begin{array}{c}
\partial _{\varphi }f(\mu ,0)\psi \\ 
0_{BUC_{\eta }}%
\end{array}%
\right) =:\left( 
\begin{array}{c}
\widehat{L}\left( \mu ,\psi \right) \\ 
0_{BUC_{\eta }}%
\end{array}%
\right) .
\end{equation*}%
System (\ref{9.1}) becomes%
\begin{equation}
\frac{dv(t)}{dt}=Av(t)+L(\mu ,v(t))+G(\mu ,v(t)),\;t\geq 0,\;\;v(0)=\left( 
\begin{array}{c}
0_{\mathbb{R}^{n}} \\ 
\varphi%
\end{array}%
\right) \in \overline{D(A)}  \label{9.3}
\end{equation}%
with%
\begin{equation*}
G(\mu ,v(t))=F\left( \mu ,v(t)\right) -L\left( \mu ,v(t)\right) .
\end{equation*}%
By section 4, we know that the linear operator $A+L\left( \mu ,.\right)
:D(A)\rightarrow X$ is a Hille-Yosida operator and $\omega _{0,ess}(\left(
A+L\left( \mu ,.\right) \right) _{0})\leq -\eta .$ Let 
\begin{equation*}
\Omega :=\{\lambda \in \mathbb{C}:{Re}(\lambda )>-\eta \}.
\end{equation*}%
The point spectrum of $\left( A+L\left( \mu ,.\right) \right) _{0}$ is the
set 
\begin{equation*}
\sigma \left( A+L\left( \mu ,.\right) \right) \cap \Omega =\sigma _{P}\left(
\left( A+L\left( \mu ,.\right) \right) _{0}\right) \cap \Omega =\left\{
\lambda \in \mathbb{C}:\det \left( \Delta \left( \mu ,\lambda \right)
\right) =0\right\} ,
\end{equation*}%
where 
\begin{equation*}
\Delta \left( \mu ,\lambda \right) :=\lambda I-\widehat{L}\left( \mu
,e^{\lambda .}I\right) .
\end{equation*}%
Hence, $A+L\left( \mu ,.\right) $ satisfies Assumptions 1.1, 1.2 and 1.3(c)
in \cite{Liu-Magal-Ruan11}. In order to apply the Hopf bifurcation theorem
obtained in \cite{Liu-Magal-Ruan11}, we need to make the following
assumption.

\begin{assumption}
\label{Assumption9.1} Let $\varepsilon >0$ and $
f\in C^{k}\left( \left( -\varepsilon ,\varepsilon \right) \times
B_{_{BUC_{\eta }}}\left( 0,\varepsilon \right) ;
\mathbb{R}^{n}\right) $ $\mathrm{for}$ some $k\geq 4.$ Assume that $\det
\left( \Delta \left( 0,\lambda \right) \right) \mathrm{=0}$ has a
simple purely imaginary root $\lambda _{0}=i\omega \neq 0$ and 
\begin{equation}
\left\{ \lambda \in \mathbb{C} :\det \left( \Delta \left( 0,\lambda \right) \right) =0\right\} \cap i 
\mathbb{R}=\left\{ i\omega ,-i\omega \right\} \text{.}  \label{9.4}
\end{equation}
 Moreover, assume that $\frac{dRe\left( \lambda (0)\right) }{d\mu } \neq \mathrm{0}$, where $\lambda (\mu )$ is the branch of
eigenvalues of $\det \Delta \left( \mu ,\lambda \right) =0$ through $i\omega $ at $\mu =0.$
\end{assumption}

By combining the results presented in the previous sections and by using the
same argument as in \cite{Liu-Magal-Ruan11} one may extend the Hopf
bifurcation theorem from finite to infinite delay differential equations.

\begin{theorem}
\label{TH9.2} Let Assumption \ref{Assumption9.1} be satisfied. Then there
exist $\varepsilon ^{\ast }>0$ and three $C^{k-1}$ maps, $\varepsilon
\rightarrow \mu (\varepsilon )$ from $\left( 0,\varepsilon ^{\ast }\right) $
into $\mathbb{R}$, $\varepsilon \rightarrow \varphi _{\varepsilon }$ from $%
\left( 0,\varepsilon ^{\ast }\right) $ into $BUC_{\eta }$, and $\varepsilon
\rightarrow \gamma \left( \varepsilon \right) $ from $\left( 0,\varepsilon
^{\ast }\right) $ into $\mathbb{R},$ such that for each $\varepsilon \in
\left( 0,\varepsilon ^{\ast }\right) $ there exists a $\gamma \left(
\varepsilon \right) $-periodic function $x_{\varepsilon }\in C^{k}\left( 
\mathbb{R}, \mathbb{R}^{n}\right) $, which is a solution of (\ref{9.1}) for the parameter value
equals $\mu (\varepsilon )$ and the initial value $\varphi $ equals $\varphi
_{\varepsilon }$.\ Moreover, we have the following properties

\begin{itemize}
\item[\rm{(i)}] There exist a neighborhood $N$ of $0$ in $%
\mathbb{R}
^{n}$ and an open interval $I$ in $\mathbb{R}$ containing $0$ such that for $%
\widehat{\mu }\in I$ and any periodic solution $\widehat{x}(t)$ in $N$ with
minimal period $\widehat{\gamma }$ close to $\frac{2\pi }{\omega }$ of (\ref%
{9.1}) for the parameter value $\widehat{\mu },$ there exists $\varepsilon
\in (0,\varepsilon ^{\ast })$ such that $\widehat{x}(t)=x_{\varepsilon
}(t+\theta )$ (for some $\theta \in \left[ 0,\gamma \left( \varepsilon
\right) \right) $), $\mu (\varepsilon )=\widehat{\mu },$ and $\gamma \left(
\varepsilon \right) =\widehat{\gamma }.$

\item[\rm{(ii)}] The map $\varepsilon \rightarrow \mu (\varepsilon )$ is a $%
C^{k-1}$ function and 
\begin{equation*}
\mu (\varepsilon )=\sum_{n=1}^{[\frac{k-2}{2}]}\mu _{2n}\varepsilon
^{2n}+O(\varepsilon ^{k-1}),\forall \varepsilon \in \left( 0,\varepsilon
^{\ast }\right),
\end{equation*}
where $[\frac{k-2}{2}]$ is the integer part of $\frac{k-2}{2}.$

\item[\rm{(iii)}] The period $\gamma \left( \varepsilon \right) $ of $%
t\rightarrow u_{\varepsilon }(t)$ is a $C^{k-1}$ function and 
\begin{equation*}
\gamma \left( \varepsilon \right) =\frac{2\pi }{\omega }[1+\sum_{n=1}^{[%
\frac{k-2}{2}]}\gamma _{2n}\varepsilon ^{2n}]+O(\varepsilon ^{k-1}),\forall
\varepsilon \in \left( 0,\varepsilon ^{\ast }\right) ,
\end{equation*}%
where $\omega $ is the imaginary part of $\lambda \left( 0\right) $ defined
in Assumption \ref{Assumption9.1}.
\end{itemize}
\end{theorem}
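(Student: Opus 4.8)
The plan is to deduce Theorem \ref{TH9.2} from the abstract Hopf bifurcation theorem in Liu, Magal and Ruan \cite{Liu-Magal-Ruan11} by verifying that the abstract Cauchy problem (\ref{9.3}) fits exactly into the framework there. The key structural facts have already been assembled in the previous sections: $A$ is a Hille--Yosida operator (Lemma \ref{LE2.2}), $A+L(\mu,\cdot)$ is a Hille--Yosida operator for each $\mu$, and by Lemma \ref{LE4.4} we have $\omega_{0,ess}\big((A+L(\mu,\cdot))_0\big)\le -\eta<0$, so that the spectrum of $(A+L(\mu,\cdot))_0$ in $\Omega=\{\operatorname{Re}\lambda>-\eta\}$ consists of finitely many eigenvalues, each a pole of finite order of the resolvent, characterized by $\det\Delta(\mu,\lambda)=0$ (Lemma \ref{LE4.5}, Lemma \ref{LE5.1}). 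These are precisely the hypotheses (Assumptions 1.1, 1.2 and 1.3(c)) required in \cite{Liu-Magal-Ruan11}, as noted in the text just before Assumption \ref{Assumption9.1}.

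First I would record the smoothness and non-degeneracy input: Assumption \ref{Assumption9.1} gives $f\in C^k$ with $k\ge 4$, a simple purely imaginary eigenvalue $\lambda_0=i\omega\neq 0$ of $\Delta(0,\cdot)$ with no other eigenvalues on the imaginary axis, and the transversality condition $\frac{d}{d\mu}\operatorname{Re}\lambda(0)\neq 0$. Because $\lambda_0$ is a simple eigenvalue of $A+L(0,\cdot)$ (in the sense of Corollary \ref{CO6.1}), the spectral projector $B_{-1}^{\lambda_0}$ has two-dimensional range (the pair $\pm i\omega$), and Corollary \ref{CO6.1} supplies the explicit formula (\ref{6.5}) for it. This lets one split $X_0$ into the two-dimensional center subspace and a complementary subspace on which the linear semigroup decays exponentially (using $\omega_{0,ess}\le-\eta$ together with the absence of other spectrum on or near $i\mathbb{R}$). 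One then invokes the center manifold theorem for non-densely defined Cauchy problems (Magal and Ruan \cite{Magal-Ruan09a}, and \cite{Magal-Ruan18}) to reduce (\ref{9.3}) near $(\mu,v)=(0,0)$ to a two-dimensional $C^{k-1}$ ordinary differential equation carrying the parameter $\mu$.

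Next I would apply the finite-dimensional Hopf bifurcation analysis exactly as in \cite{Liu-Magal-Ruan11}: the transversality condition ensures the reduced vector field undergoes a genuine Hopf bifurcation, producing the $C^{k-1}$ branches $\varepsilon\mapsto\mu(\varepsilon)$, $\varepsilon\mapsto\varphi_\varepsilon$, $\varepsilon\mapsto\gamma(\varepsilon)$ and the corresponding periodic orbits; the parity of the terms in the expansions of $\mu(\varepsilon)$ and $\gamma(\varepsilon)$ (only even powers, up to order $k-1$) comes from the $S^1$-equivariance of the normal form, again as in \cite{Liu-Magal-Ruan11}. The local uniqueness statement (i) follows from the attractivity and exponential local invariance of the center manifold: any small periodic solution with period near $2\pi/\omega$ must lie on it and hence corresponds to a point on the constructed branch. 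Finally, one transports everything back through Lemma \ref{LE7.1} and Theorem \ref{TH7.4} (and Remark \ref{REM7.5}): a mild solution $v(t)=\binom{0}{x_t}$ of (\ref{9.3}) corresponds to a solution $x_\varepsilon$ of the FDE (\ref{9.1}), and $v$ is $\gamma(\varepsilon)$-periodic iff $x_\varepsilon$ is; the $C^k$ regularity of $x_\varepsilon$ in $t$ follows because $x_\varepsilon$ satisfies $\dot x=f(\mu(\varepsilon),x_t)$ with $f\in C^k$, bootstrapping from continuity.

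The main obstacle is not any single computation but checking that the framework of \cite{Liu-Magal-Ruan11}, developed there for finite delay, applies verbatim here. Concretely, one must confirm that the non-densely defined center manifold and normal form machinery only uses (a) the Hille--Yosida property, (b) $\omega_{0,ess}<0$ for the part of $A+L(\mu,\cdot)$, and (c) the explicit resolvent and projector formulas — all of which were established in Sections 2--6 above for the $BUC_\eta$ setting, with the weight $\eta>0$ playing precisely the role that compactness of the state space plays in the finite-delay case (it forces $\omega_{0,ess}\le-\eta$). Once this equivalence is in place, the theorem is a direct transcription of the finite-delay result, so I would keep this part brief, citing \cite{Liu-Magal-Ruan11} and \cite{Magal-Ruan09a,Magal-Ruan18} for the details.
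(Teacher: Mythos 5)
Your proposal follows essentially the same route as the paper, which itself only sketches the argument: verify that $A+L(\mu,\cdot)$ satisfies Assumptions 1.1, 1.2 and 1.3(c) of Liu, Magal and Ruan \cite{Liu-Magal-Ruan11} using the Hille--Yosida property, the bound $\omega_{0,ess}\le -\eta$ and the characteristic-equation description of the spectrum from Sections 2--6, then invoke the abstract Hopf bifurcation theorem there (built on the center manifold reduction of \cite{Magal-Ruan09a}) and transport the periodic orbits back to the FDE via Lemma \ref{LE7.1}. The only nitpick is that $B_{-1}^{\lambda_0}$ for the single simple eigenvalue $i\omega$ has one-dimensional complex range; the two-dimensional real center space comes from the conjugate pair $\pm i\omega$ together.
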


Actually such a Hopf bifurcation result is based on the fact that such a
system has a local center manifold. The existence of a local center manifold
is a direct consequence of the center manifold theorem obtained in \cite%
{Magal-Ruan09a}. To conclude we would like to mention that it is also
possible to apply the normal form theory presented in \cite{Liu-Magal-Ruan14}
to infinite delay differential equations.

\end{document}